\documentclass[12pt, reqno, a4paper]{amsart}

\usepackage{amsmath}
\usepackage{amssymb}
\usepackage{amsthm}
\usepackage{amscd}
\usepackage{mathrsfs}
\usepackage{mathtools}

\usepackage{enumerate}

\usepackage[margin=1.5in]{geometry}


\newcommand{\ceil}[1]{\left\lceil #1 \right\rceil}
\newcommand{\N}{\mathbb{N}}
\newcommand{\Z}{\mathbb{Z}}
\newcommand{\R}{\mathbb{R}}
\newcommand{\C}{\mathbb{C}}
\newcommand{\E}{\mathbb{E}}

\newcommand{\mb}{\mathbb}

\newcommand{\fp}{\mb{F}_p}
\newcommand{\Mod}[1]{\ (\text{mod}\ #1)}
\newcommand{\ol}{\overline}
\newcommand{\nequiv}{\not\equiv }

\newcommand{\eps}{\epsilon}

\renewcommand{\le}{\leqslant}
\renewcommand{\ge}{\geqslant}

\renewcommand{\hat}{\widehat}
\renewcommand{\tilde}{\widetilde}

\newtheorem{thm}{Theorem}[section]

\newtheorem{prop}[thm]{Proposition}
\newtheorem{lem}[thm]{Lemma}

\theoremstyle{definition}
\newtheorem*{defn}{Definition}

\theoremstyle{claim}

\newtheorem{question}{Question}
\newtheorem*{question*}{Question}


\numberwithin{equation}{section}


\begin{document}
\title{Partition regularity of generalised Fermat equations}
\author{Sofia Lindqvist}
\begin{abstract}
Let $\alpha,\beta,\gamma\in\N$. We prove that given an $r$-colouring of $\fp$ with $p$ prime, there are more than $c_{r,\alpha,\beta,\gamma} p^2$ solutions to the equation $x^\alpha+y^\beta=z^\gamma$ with all of $x,y,z$ of the same colour. Here $c_{r,\alpha,\beta,\gamma}>0$ is some constant depending on the number of colours and the exponents in the equation. This is already a new result for $\alpha=\beta=1$ and $\gamma=2$, that is to say for the equation $x+y=z^2$.
\end{abstract}
\maketitle
\setcounter{tocdepth}{1}
\tableofcontents
\section{Introduction}
Let $p$ be a prime, and write $\fp = \Z/p\Z$. We study partition regularity in $\fp$ of the equation
\begin{equation}
x+y=z^2
\label{problem}
\end{equation}
and of the more general equation
\begin{equation}
x^\alpha+y^\beta = z^\gamma
\label{problem2}
\end{equation}
where $\alpha,\beta,\gamma\in \N$. We are interested in the following two questions, which were asked in \cite{csikvari-gyarmati-sarkozy}. 

\begin{question}\label{q1}
Given an $r$-colouring of $\fp$, will there always be a non-trivial solution to \eqref{problem} with $x,y$ and $z$ the same colour?
\end{question}

\begin{question}\label{q2}
Given an $r$-colouring of $\fp$, will there always be a non-trivial solution to \eqref{problem2} with $x,y$ and $z$ the same colour?
\end{question}

Here any solution with $x=y=z$ is counted as trivial. It turns out that the answer to both questions is indeed positive, provided $p$ is sufficiently large, which is the content of our two main theorems.

\begin{thm}\label{thm:main}
Suppose that $\fp$ is $r$-coloured. Then there are at least $c_rp^2$ monochromatic triples $(x,y,z)$ that satisfy \eqref{problem}, where $c_r>0$ depends on $r$ but not on $p$.
\end{thm}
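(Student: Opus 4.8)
The plan is to run the circle method in $\fp$ colour class by colour class. Write $e_p(t)=e^{2\pi it/p}$, let $A_1,\dots,A_r$ be the colour classes, put $\delta_i=|A_i|/p$, and let $N_i$ be the number of triples $(x,y,z)\in A_i^3$ with $x+y=z^2$, so that the goal is $\sum_iN_i\ge c_rp^2$. Detecting the constraint $x+y-z^2=0$ with additive characters and summing over $x$ and $y$ first gives (after reindexing $\xi\mapsto-\xi$)
\[
N_i=\frac1p\sum_{\xi\in\fp}\hat{1_{A_i}}(\xi)^2\,V_i(\xi),\qquad
\hat{1_{A_i}}(\xi)=\sum_{x\in A_i}e_p(-x\xi),\quad V_i(\xi)=\sum_{z\in A_i}e_p(\xi z^2).
\]
The frequency $\xi=0$ contributes the expected main term $|A_i|^3/p=\delta_i^3p^2$, and convexity together with $\sum_i\delta_i=1$ gives $\sum_i\delta_i^3p^2\ge p^2/r^2$.

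For $\xi\ne0$ I bound crudely, using Parseval $\sum_\xi|\hat{1_{A_i}}(\xi)|^2=p|A_i|$:
\[
\Big|\frac1p\sum_{\xi\ne0}\hat{1_{A_i}}(\xi)^2V_i(\xi)\Big|\le\frac1p\Big(\max_{\xi\ne0}|V_i(\xi)|\Big)\sum_{\xi}|\hat{1_{A_i}}(\xi)|^2=|A_i|\max_{\xi\ne0}|V_i(\xi)|.
\]
Hence $\sum_iN_i\ge p^2/r^2-\sum_i|A_i|\max_{\xi\ne0}|V_i(\xi)|$, and everything is reduced to understanding how much a colour class can correlate with a quadratic phase $z\mapsto e_p(\xi z^2)$.

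Fix $\kappa=\kappa(r)>0$, for instance $\kappa=1/(2r^2)$. If $\max_{\xi\ne0}|V_i(\xi)|\le\kappa p$ for every $i$, then $\sum_i|A_i|\max_{\xi\ne0}|V_i(\xi)|\le\kappa p\sum_i|A_i|=\kappa p^2$ and we are done with $c_r=1/(2r^2)$. Moreover a class with $\delta_i<\kappa$ satisfies this automatically, since $|V_i(\xi)|\le|A_i|<\kappa p$. So it remains to treat the case that some class $A_{i_0}$, necessarily of density $\delta_{i_0}\ge\kappa$, has $\big|\sum_{z\in A_{i_0}}e_p(\xi_0z^2)\big|>\kappa p$ for some $\xi_0\ne0$; this is the step I expect to be the main obstacle.

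The handle on the structured case is the identity $\sum_{z\in A_{i_0}}e_p(\xi_0z^2)=\sum_w\#\{z\in A_{i_0}:z^2=w\}\,e_p(\xi_0w)$, which says that the multiset of squares $\{z^2:z\in A_{i_0}\}$ correlates with the honest linear character $w\mapsto e_p(\xi_0w)$; equivalently, $A_{i_0}$ has density at least $\delta_{i_0}+c(\kappa,r)$ on a set of the form $\{z:\xi_0z^2\in I\}$ for some arc $I\subseteq\fp$. The plan from here is a density--increment scheme: one repeatedly imposes extra conditions $\xi_jz^2\in I_j$ on $z$, each step increasing the relative density of $A_{i_0}$ by $\gg_r1$, so that after $O_r(1)$ steps one reaches a set $B=\{z:\xi_jz^2\in I_j\text{ for all }j\}$ --- the preimage under squaring of a bounded-rank generalised progression --- on which $A_{i_0}$ is quadratically uniform relative to $B$; carrying out the circle-method computation inside $B$ then yields $\gg_r|B|^3/p\gg_rp^2$ monochromatic triples, all in $A_{i_0}$. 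Both alternatives rely on the square-root cancellation of incomplete quadratic Gauss sums, $\sum_{j=0}^{\ell-1}e_p(\alpha j^2+\beta j)=O(\sqrt p\log p)$ for $\alpha\ne0$ (complete the sum to $O(\log p)$ full Gauss sums, each of modulus $\sqrt p$), hence $\sum_{z\in Q}e_p(\xi z^2)=O(\sqrt p\log p)$ for any arithmetic progression $Q\subseteq\fp$ and any $\xi\ne0$, as well as over the progression-like sets appearing in the iteration. The genuinely technical points --- the precise geometry of these ``quadratic Bohr sets'', the relative-uniformity increment, and the mixed counting problem for triples with $x^2,y^2,x+y$ all confined to a generalised progression and $x+y$ a square --- are where the real work is; modulo these, taking $\kappa=\kappa(r)$ and then $p$ large gives $\sum_iN_i\ge c_rp^2$.
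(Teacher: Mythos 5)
The Fourier identity, the treatment of the $\xi=0$ term, and the ``uniform case'' (every colour class has $\max_{\xi\ne0}|V_i(\xi)|\le \kappa p$) are all fine. The genuine gap is the structured case, and it is not merely technical: the density--increment endgame you propose cannot work in principle, because \eqref{problem} admits no density theorem relative to the sets you increment onto. Your plan is to pass to a quadratic Bohr set $B=\{z:\xi_j z^2\in I_j \text{ for all } j\}$ (and, once you ask for relative uniformity against the phases $e_p(az^2+bz)$ that actually control the count, also linear conditions $z\in J_j$), obtain large relative density and relative uniformity of $A_{i_0}$ inside $B$, and then claim the circle method inside $B$ yields $\gg_r|B|^3/p$ solutions. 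But relative uniformity only gives you roughly $\sigma^3$ times the number of solutions of $x+y=z^2$ with $x,y,z\in B$, and that count can be zero: the set $A=\{x\in\fp:0\le x<p/3,\ 2p/3\le x^2<p\}$ mentioned in the introduction is exactly such a Bohr-type set, has density about $1/9$, and contains no solutions at all. So a colouring in which one colour class is a set of this shape defeats the scheme --- your iteration may happily focus on that class, reach a structured set of positive density, and still find nothing, because the arcs $I_j,J_j$ interact with the equation $x+y=z^2$ and can be chosen to exclude solutions entirely.

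What is missing is the colour-focusing (Ramsey) ingredient: once you restrict to a structured set, the problem is again a partition-regularity problem on that set, not a density problem, so you cannot discard the other colour classes after singling out $A_{i_0}$. The paper's proof keeps all colours throughout: a regularity lemma writes each $1_{c^{-1}(i)}$ as $F_i\circ\Psi$ plus a $u_3$-small error, with $\sum_i F_i\circ\Psi\ge1$ pointwise; a counting lemma transfers $T(F_i\circ\Psi,F_i\circ\Psi,F_i\circ\Psi)$ to an integral over the compact group $H_\Psi$; and a Ramsey lemma (proved by induction on $r$ with partial colourings, which is the analogue of your ``restrict to a Bohr set and recurse'' but retaining the whole colouring) guarantees that \emph{some} colour has many configurations there. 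Note also that the error in your decomposition only involves the pure phases $e_p(\xi z^2)$, but any honest relative counting inside a structured set forces you to control correlations with general quadratic phases $e_p(az^2+bz)$ --- this is why the paper works with the $u_3$-norm --- so even your notion of ``quadratically uniform relative to $B$'' needs to be strengthened before the counting step, at which point the counterexample above applies with full force. To repair the argument you would need to replace the density increment by an argument that, inside the structured set, re-uses all $r$ colours (as the Ramsey lemma does), which is essentially the paper's route.
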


Note that an immediate corollary of Theorem \ref{thm:main} is that provided $p> \sqrt{2}c_r^{-1/2}$ then there is a monochromatic non-trivial solution to \eqref{problem}, as this ensures more solutions than the trivial ones $x=y=z=0$ and $x=y=z=2$.

\begin{thm}\label{thm:main2}
Suppose that $\fp$ is $r$-coloured. Then there are at least $c_{r,\alpha,\beta,\gamma}p^2$ monochromatic triples $(x,y,z)$ that satisfy \eqref{problem2}, where $c_{r,\alpha,\beta,\gamma}>0$ depends on $r,\alpha,\beta$ and $\gamma$ but not on $p$.
\end{thm}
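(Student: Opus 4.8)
The plan is to run the circle method and reduce matters to the same core difficulty that lies behind Theorem~\ref{thm:main}. Write $A_1,\dots,A_r$ for the colour classes, set $e_p(t)=e^{2\pi i t/p}$ and $S_i^k(\xi)=\sum_{x\in A_i}e_p(\xi x^k)$, and let $M$ be the number of monochromatic triples satisfying \eqref{problem2}. Expanding the indicator of $x^\alpha+y^\beta=z^\gamma$ over additive characters gives
\[
M=\frac1p\sum_{i=1}^r\sum_{\xi\in\fp}S_i^\alpha(\xi)\,S_i^\beta(\xi)\,\overline{S_i^\gamma(\xi)}.
\]
The frequency $\xi=0$ contributes $\frac1p\sum_i|A_i|^3$, which is at least $p^2/r^2$ by the power-mean inequality, so it suffices to bound the contribution $\mathcal E$ of the nonzero frequencies by, say, $\tfrac12 p^2/r^2$; this would give Theorem~\ref{thm:main2} with $c_{r,\alpha,\beta,\gamma}=\tfrac1{2r^2}$ (and, for $p$ large, more monochromatic solutions than the boundedly many trivial ones with $x=y=z$).

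The estimate for $\mathcal E$ is the heart of the matter, and is where I would import the work behind Theorem~\ref{thm:main}. Parseval gives $\|S_i^k\|_2^2\le k\,p\,|A_i|$, since the fibres of $x\mapsto x^k$ have size at most $\gcd(k,p-1)\le k$. Hence, if at a frequency $\xi\ne0$ one of $|S_i^\alpha(\xi)|,|S_i^\beta(\xi)|,|S_i^\gamma(\xi)|$ is small for the colour class in question, bounding that factor in $\ell^\infty$ and the other two by Cauchy--Schwarz controls its contribution; summing over the three possibilities shows that the only frequencies that can spoil the estimate are ``bad'' ones, at which \emph{all three} monomial sums are comparable to $|A_i|$. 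Since Weil's bound gives $\sum_{x\in\fp}e_p(\xi x^k)=O_k(\sqrt p)$, a bad frequency forces the colour class $A_i$ to be \emph{structured}: a Fourier inverse statement places $A_i$, up to a small error, on a bounded number of level sets of one of the monomial maps, i.e.\ $A_i\approx\{x:\xi_0 x^k\bmod p\in I\}$ for a short interval $I$. One then disposes of the structured classes separately, counting the solutions of $x^\alpha+y^\beta=z^\gamma$ that lie inside such a set directly, using that the solution surface is genuinely two-dimensional.

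The main obstacle --- exactly as for Theorem~\ref{thm:main} --- is this structured case: reconciling the \emph{additive} equation $x^\alpha+y^\beta=z^\gamma$ with colour classes whose defining feature is \emph{multiplicative} (level sets of $x\mapsto\xi_0 x^k$; in the most degenerate subcase, unions of cosets of the group of $m$-th powers). I expect this to require either an explicit parametrisation of the solution variety adapted to the multiplicative constraint, or bounds for mixed character sums $\sum_x\chi(x)\,e_p(\xi x^k)$ of Deligne--Katz type, together with enough bookkeeping that the resulting lower bound stays ahead of the trivial solutions in every case. Two further remarks: one cannot short-cut by replacing $\gamma$ with $\gcd(\gamma,p-1)$ --- the two maps have equal fibre sizes but different fibres, so the colouring is not preserved under $z\mapsto z^{\gamma/\gcd(\gamma,p-1)}$ --- so the general exponents genuinely have to be carried through; and I would expect the case $\alpha=\beta=1$, $\gamma=2$ already treated in Theorem~\ref{thm:main} to contain all the essential difficulty, with general $\alpha,\beta,\gamma$ contributing only (substantial) technical overhead.
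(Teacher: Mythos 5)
There is a genuine gap, and it sits exactly at the point you defer. Your plan is a density argument: isolate the main term $\frac1p\sum_i|A_i|^3$ and hope to show the nonzero frequencies contribute $o(p^2)$ unless the colour class is ``structured'', then handle structured classes by counting solutions inside them directly. But the paper's introduction already exhibits why this cannot close: the set $A=\{x:0\le x<p/3,\ 2p/3\le x^2<p\}$ has density about $1/9$ and \emph{no} solutions to \eqref{problem}, so for such a class the nonzero frequencies cancel the entire main term. In particular the structured case is not a removable technicality in which ``the solution surface is genuinely two-dimensional'' saves you --- a structured colour class can be completely solution-free, and no direct count inside that class, nor any Deligne--Katz bound for mixed sums, will produce monochromatic solutions there. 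The only way to proceed is to exploit the \emph{colouring}: if one class is structured and poor in solutions, the solutions must be found in the way the remaining classes sit relative to that structure. That is precisely the content of the machinery you have not supplied: a regularity lemma decomposing all colour classes simultaneously relative to a polynomial system (the analogue of Lemma \ref{lem:regularity_lemma}), a counting lemma transferring $T(F\circ\Psi,F\circ\Psi,F\circ\Psi)$ to an integral over the compact group $H_\Psi$ (Lemma \ref{counting_lemma}), and above all a Ramsey-type lemma on that group proved by induction on the number of colours with partial colourings (Lemma \ref{ramsey_lemma}, Proposition \ref{ramsey_prop}); in the general setting this Ramsey step even takes three different forms according to how many of $\alpha,\beta,\gamma$ coincide, the case $\alpha=\beta=\gamma$ reducing to a quantitative Schur theorem and the case of three distinct exponents needing a new colouring argument. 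None of this is captured by ``dispose of the structured classes separately''.

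Two smaller but real problems in the sketch. First, the inverse step is overstated: largeness of all three sums $S_i^\alpha,S_i^\beta,S_i^\gamma$ at a frequency gives correlation of $1_{A_i}$ with a single polynomial phase, not that $A_i$ is, up to small error, a union of boundedly many level sets of one monomial map; upgrading correlation to an approximation by level-set--measurable functions is exactly the energy-increment/Koopman--von Neumann argument behind Lemma \ref{lem:regularity_lemma}, and the approximation one gets is in the $u_3$-type norm $\|\cdot\|_{\alpha,\beta,\gamma}$ with respect to a \emph{multi-dimensional} system, which then has to be fed into a counting lemma rather than used set-theoretically. Second, your intended reduction ``import the work behind Theorem \ref{thm:main}'' does not match how the general case is actually proved: the paper does not deduce Theorem \ref{thm:main2} from Theorem \ref{thm:main}, but reruns the whole regularity--counting--Ramsey scheme with systems $\Psi(x)=(a_ix^\alpha/p,a_ix^\beta/p,a_ix^\gamma/p)_{i=1}^d$, Weyl/van der Corput bounds of shape $p^{-1/2^{K-1}}$, and the case analysis on coincident exponents. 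As it stands, your proposal reproduces the easy part of the argument (orthogonality, Parseval, the main term) and leaves the partition-regularity content --- the part that distinguishes this theorem from a false density statement --- unproved.
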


As noted in \cite{csikvari-gyarmati-sarkozy} there is no general density result for \eqref{problem}. By this it is meant that given a set $A\subset \fp$ with $|A|\ge \alpha p$ for some $\alpha$ which is independent of $p$, there is not necessarily a solution to \eqref{problem} with $x,y,z\in A$. Indeed, the set
\begin{equation*}
A = \left\{ x\in \fp: 0\le x<p/3, 2p/3\le x^2<p \right\}
\end{equation*}
has size $|A|=\frac{1}{9}p + o(p)$, but clearly no solutions to \eqref{problem}. This can be proven using Fourier analysis on $\fp$, which we leave as an exercise for the reader.

It is also worth noting that \eqref{problem} is not partition regular over $\Z/q\Z$, where $q=p^n$ and $p$ is a fixed odd prime greater than $3$. Indeed, by modifying a counterexample given in \cite{csikvari-gyarmati-sarkozy} which shows that \eqref{problem} is not partition regular over $\mb{N}$ by using $16$ colours, one can obtain a counterexample over $\mb{Z}/p^n\mb{Z}$ where the number of colours needed depends on $p$ but not $n$, as shown in \cite{lindqvist_counterex}. This indicates that the primality of $p$ must play an important role in the proof of Theorem \ref{thm:main}, and thus also in Theorem \ref{thm:main2}.

For Question \ref{q2}, the special case of proving partition regularity of the Fermat equation
\[ a^n+b^n=c^n\]
was done in \cite{csikvari-gyarmati-sarkozy}. Their proof uses Schur's lemma \cite{schur} on partition regularity of the equation $x+y=z$. This result strengthened a previous result of Dickson \cite{dickson1} proving existence of non-trivial solutions to this equation over $\fp$.

The proofs of theorems \ref{thm:main} and \ref{thm:main2} use the methods developed by Green and Sanders in \cite{green-sanders}, where they establish partition regularity for quadruples $(x,y,xy,x+y)$ over $\fp$. In several instances the results we need are simply weaker versions of results proved in \cite{green-sanders}, and in these cases our proofs will closely resemble theirs. In some places we will cite the corresponding results in \cite{green-sanders} so that the reader may compare proofs.

Even though the statement of Theorem \ref{thm:main2} is more general than that of Theorem \ref{thm:main}, which is a corollary of the former, the proofs use essentially the same ingredients. For this reason we will introduce all the main tools needed in the context of Theorem \ref{thm:main}, as this avoids the additional clutter caused by the parameters $\alpha,\beta$ and $\gamma$. The proof consists of three main ingredients: a \emph{regularity lemma}, a \emph{counting lemma} and a \emph{Ramsey lemma}. Theorem \ref{thm:main} is proved using these in \S \ref{sec:main}, and then the proofs of the lemmas are given in \S\S \ref{sec:regularity}, \ref{sec:counting}, \ref{sec:ramsey} respectively. Finally, in \S \ref{sec:main2} we mention the necessary modifications of the proof that are needed to establish the more general Theorem \ref{thm:main2}.

\subsection*{Notation}
We write $\mb{E}_{x\in S} = \frac{1}{|S|}\sum_{x\in S}$ for the expectation over some set $S$, as is standard in additive combinatorics. Usually we will have $S=\mb{F}_p$, in which case we will write only $\mb{E}_x$. We write $e_p(x) \coloneqq e^{2\pi i x/p}$. For asymptotic relations we use the usual $O$-notation, where $f=O(g)\Leftrightarrow |f|\le Cg$ for some absolute constant $C$. We will write $O_\alpha(g)$ when the implied constant may depend on the parameter $\alpha$. In addition we use $f\ll g$ as an alternative way of writing $f = O(g)$. If $G$ is a compact Abelian group we denote probability Haar measure on $G$ by $\mu_G$.

\subsection*{Acknowledgements}
The author would like to thank Ben Green for many useful discussions and suggestions. In addition, the author is grateful to the anonymous referees for several useful comments, and in particular for pointing out the fact that partial colourings are not needed in the proof of the main theorem.

This work is supported by Ben Green's ERC Starting Grant 279438, Approximate Algebraic Structure and Applications. 

\section{Counting solutions}\label{sec:count_sol}
For functions $f_1,f_2,f_3: \fp \to \mb{C}$, define 
\begin{equation}
T(f_1,f_2,f_3) = \frac{1}{p^2}\sum_{\substack{x,y,z\in \fp\\x+y=z^2}} f_1(x)f_2(y)f_3(z).
\label{eq:T}
\end{equation}
Clearly $p^2T(1_A,1_A,1_A)$ counts the number of solutions to \eqref{problem} with $x,y,z\in A$, and so we would like to control this quantity. We will show below that $|T(f_1,f_2,f_3)|$ can be controlled by some norm of the functions $f_1,f_2,f_3$. Before stating this result we recall some standard concepts of additive combinatorics.

\subsection*{The Fourier transform}
We will make frequent use of the Fourier transform on $\mb{F}_p$, which is defined by
\begin{equation*}
\hat{f}(\xi) \coloneqq \mb{E}_{x\in\fp} f(x)e_p(-x\xi) = \frac{1}{p}\sum_{x\in\fp} f(x)e_p(-x\xi).
\end{equation*}
Here we have implicitly identified the additive characters $x\mapsto e_p(\xi x)$ with the element $\xi\in\fp$. Some standard properties of the Fourier transform will be useful for us, so we recall them here. The inversion formula is given by
\begin{equation*}
f(x) = \sum_{\xi\in\fp} \hat{f}(\xi) e_p(x\xi).
\end{equation*}
Parseval's identity tells us that
\begin{equation*}
\|f\|_2^2 = \mb{E}_x |f(x)|^2 = \sum_{\xi} |\hat{f}(\xi)|^2.
\end{equation*}
Finally, we define convolutions between two functions $f,g$ as
\begin{equation*}
f*g(x) \coloneqq \mb{E}_y f(x-y)g(y),
\end{equation*}
which is transformed to multiplication under the Fourier transform in light of the identity
\begin{equation*}
\hat{f*g}(\xi) = \hat{f}(\xi)\hat{g}(\xi).
\end{equation*}

\subsection*{The $u_2$- and $u_3$-norms}
We will make frequent use of the $u_2$ and $u_3$ norms, as defined below. 
\begin{defn}
Let $f:\fp\to \mb{C}$. The $u_2$ norm of $f$ is defined by
\begin{equation*}
\|f\|_{u_2} = \sup_{\xi} |\hat{f}(\xi)|.
\end{equation*}
\end{defn}

\begin{defn}
Let $f:\fp\to \mb{C}$. The $u_3$ norm of $f$ is defined by
\begin{equation*}
\|f\|_{u_3} = \sup\Big\{ \big|\mb{E}_{x \in\fp} f(x) e_p(ax^2+bx)\big|:a,b\in \fp \Big\}.
\end{equation*}
\end{defn}

Note that by the definition of $\hat{f}$ it immediately follows that $\|f\|_{u_2} \le \|f\|_{u_3}$.

\subsection*{Norm bounds}
The main content of this section is the following proposition.
\begin{prop}\label{normbound_prop}
If $\|f_1\|_2,\|f_2\|_2 ,\|f_3\|_2 \le 1$ then
\begin{equation*}
|T(f_1,f_2,f_3)| \le\sqrt{2} \min_i \|f_i\|_{u_3}.
\end{equation*}
\begin{proof}
Define $g(w) = \sum_{z^2=w} f_3(z)$, where $g(w)$ is understood to be zero if there is no $z$ such that $z^2=w$. Then by the properties of convolution and the inverse formula for the Fourier transform one has
\begin{equation}
\begin{split}
T(f_1,f_2,f_3) &= \mb{E}_x\mb{E}_y f_1(x)f_2(y)g(x+y)\\
&= \sum_{\xi\in\fp} \hat{f}_1(-\xi)\hat{f}_2(-\xi)\hat{g}(\xi).
\end{split}
\label{eh}
\end{equation}
By the Cauchy--Schwarz inequality and Parseval one gets
\begin{equation*}
\begin{split}
|T(f_1,f_2,f_3)| &\le \|f_1\|_{u_2} \Big(\sum_{\xi}|\hat{f}_2(-\xi)|^2\Big)^{1/2}\Big( \sum_{\xi} |\hat{g}(\xi)|^2 \Big)^{1/2}\\
&= \|f_1\|_{u_2} \|f_2\|_2 \Big( \sum_{\xi} |\hat{g}(\xi)|^2 \Big)^{1/2}.
\end{split}
\end{equation*}
Furthermore,
\begin{equation*}
\begin{split}
\sum_{\xi} |\hat{g}(\xi)|^2 &= \E_x |g(x)|^2
= \E_x \sum_{z^2=x}f_3(z)\sum_{w^2=x}\ol{f_3(w)}\\
&= \E_z |f_3(z)|^2 + \E_z f_3(z)\ol{f_3(-z)}-\frac{|f(0)|^2}{p}
\end{split}
\end{equation*}
where we use Parseval and the definition of $g$. The triangle inequality and Cauchy--Schwarz applied to the second sum above gives that
\begin{equation*}
\Big( \sum_{\xi} |\hat{g}(\xi)|^2 \Big)^{1/2} \le \sqrt{2}\|f_3\|_2,
\end{equation*}
and so
\begin{equation*}
|T(f_1,f_2,f_3)| \le \sqrt{2}\|f_1\|_{u_2}\|f_2\|_2\|f_3\|_2\le \sqrt{2}\|f_1\|_{u_2},
\end{equation*}
by the assumptions on $\|f_2\|_2,\|f_3\|_2$. The exact same argument with the roles of $f_1$ and $f_2$ interchanged gives $|T(f_1,f_2,f_3)|\le\sqrt{2} \|f_2\|_{u_2}$. 

To get the bound in terms of $\|f_3\|_{u_3}$, note that from \eqref{eh}, Cauchy--Schwarz and Parseval one also gets
\begin{equation*}
|T(f_1,f_2,f_3)| \le \|g\|_{u_2} \Big( \sum_{\xi} |f_1|^2 \Big)^{1/2}\Big( \sum_{\xi} |f_2|^2 \Big)^{1/2} = \|g\|_{u_2} \|f_1\|_2 \|f_2\|_2,
\end{equation*}
which is less than $\|g\|_{u_2}$ by the assumptions on $\|f_1\|_2,\|f_2\|_2$. Furthermore, 
\begin{equation*}
\|g\|_{u_2} = \sup_{\xi} \Big| \mb{E}_x g(x)e_p(-x\xi)\Big| = \sup_{\xi} \Big| \mb{E}_z f_3(z) e_p(-z^2\xi)\Big| \le \|f_3\|_{u_3},
\end{equation*}
which completes the proof.
\end{proof}
\end{prop}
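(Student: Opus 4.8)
The plan is to linearise the constraint $x+y=z^2$ by folding the variable $z$ into a single new function on $\fp$. Set $g(w) = \sum_{z:\,z^2=w} f_3(z)$, with the convention that $g(w)=0$ when $w$ is a non-residue. Then $T(f_1,f_2,f_3) = \E_x\E_y f_1(x)f_2(y)g(x+y)$, and expanding the functions into additive characters (equivalently, reading this as a convolution tested against $f_1$) yields the clean identity $T(f_1,f_2,f_3) = \sum_{\xi} \hat f_1(-\xi)\hat f_2(-\xi)\hat g(\xi)$. Everything is then extracted from this single formula together with Cauchy--Schwarz and Parseval, so the whole argument reduces to estimating $\sum_\xi |\hat g(\xi)|^2$ and recognising $\hat g$ as an object controlled by the $u_3$-norm of $f_3$.

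For the bound in terms of $f_1$ (and, symmetrically, $f_2$), I would pull $\|f_1\|_{u_2}=\sup_\xi|\hat f_1(\xi)|$ out of the sum, apply Cauchy--Schwarz to the remaining $\sum_\xi \hat f_2(-\xi)\hat g(\xi)$, and use Parseval to turn the two factors into $\|f_2\|_2\le 1$ and $\bigl(\sum_\xi|\hat g(\xi)|^2\bigr)^{1/2}$. Since $\|f_1\|_{u_2}\le\|f_1\|_{u_3}$, it then suffices to show $\sum_\xi|\hat g(\xi)|^2\le 2\|f_3\|_2^2\le 2$. By Parseval this sum equals $\E_x|g(x)|^2$; expanding $g$ and pairing each residue $x\ne 0$ with its two square roots $\pm z$ gives $\E_x|g(x)|^2 = \E_z|f_3(z)|^2 + \E_z f_3(z)\overline{f_3(-z)}$ up to a harmless $O(1/p)$ term coming from $x=0$, and the triangle inequality plus Cauchy--Schwarz on the cross term bounds this by $2\|f_3\|_2^2$. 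This is the one place where the quadratic nature of the equation — the (essentially) two-to-one map $z\mapsto z^2$ — enters, and it is where I expect the only genuine bookkeeping.

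For the bound in terms of $f_3$, I would instead pull $\|g\|_{u_2}=\sup_\xi|\hat g(\xi)|$ out of the same identity; Cauchy--Schwarz and Parseval leave $\|f_1\|_2\|f_2\|_2\le 1$. It then remains to observe that $\hat g(\xi) = \E_x g(x)e_p(-x\xi) = \E_z f_3(z)e_p(-z^2\xi)$, which is precisely of the form appearing in the definition of $\|f_3\|_{u_3}$ (take $a=-\xi$, $b=0$), so $\|g\|_{u_2}\le\|f_3\|_{u_3}$. Taking the minimum over the three resulting bounds gives the claim. None of the steps looks difficult; the main obstacle, such as it is, is ensuring that the square-root multiplicity and the $x=0$ contribution in the $\sum_\xi|\hat g(\xi)|^2$ computation are handled cleanly enough to retain the constant $\sqrt 2$.
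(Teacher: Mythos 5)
Your proposal is correct and follows essentially the same route as the paper: the substitution $g(w)=\sum_{z^2=w}f_3(z)$, the identity $T=\sum_\xi \hat f_1(-\xi)\hat f_2(-\xi)\hat g(\xi)$, the Parseval computation $\E_x|g(x)|^2=\E_z|f_3(z)|^2+\E_z f_3(z)\overline{f_3(-z)}-|f_3(0)|^2/p\le 2\|f_3\|_2^2$ (the $x=0$ correction is indeed nonpositive, so the constant $\sqrt 2$ survives), and the observation $\|g\|_{u_2}\le\|f_3\|_{u_3}$ are all exactly the paper's steps.
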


In addition to Proposition \ref{normbound_prop} we record the following rather trivial bound.
\begin{lem}\label{l2_bound_lemma}
Let $f_1,f_2,f_3: \fp\to \mb{C}$. Then
\begin{equation}
|T(f_1,f_2,f_3)| \le \|f_1\|_2\|f_2\|_2\|f_3\|_2.
\end{equation}
\begin{proof}
By the triangle inequality and Cauchy--Schwarz we have
\begin{multline*}
|T(f_1,f_2,f_3)| \le \mb{E}_{x,z} |f_1(x)f_2(z^2-x)f_3(z)|\\
\le \mb{E}_z|f_3(z)| (\mb{E}_x |f_1(x)|^2)^{1/2} (\mb{E}_x |f_2(z^2-x)|^2)^{1/2}\\ = \|f_3\|_1 \|f_1\|_2\|f_2\|_2 \le \|f_1\|_2\|f_2\|_2\|f_3\|_2,
\end{multline*}
where we use that as $x$ runs through $\fp$, so does $z^2-x$.
\end{proof}
\end{lem}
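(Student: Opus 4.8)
The plan is to collapse the constrained sum defining $T$ into an unconstrained average by solving $x+y=z^2$ for $y$, and then apply the triangle inequality followed by a single application of the Cauchy--Schwarz inequality in a carefully chosen order.

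First I would observe that for each pair $(x,z)\in\fp^2$ there is exactly one $y$ with $x+y=z^2$, namely $y=z^2-x$, so that
\begin{equation*}
T(f_1,f_2,f_3)=\mb{E}_{x,z}\,f_1(x)\,f_2(z^2-x)\,f_3(z),
\end{equation*}
and hence $|T(f_1,f_2,f_3)|\le\mb{E}_{x,z}\,|f_1(x)|\,|f_2(z^2-x)|\,|f_3(z)|$ by the triangle inequality. The next step is to peel off the $z$-variable, rewriting the right-hand side as $\mb{E}_z\,|f_3(z)|\cdot\mb{E}_x\,|f_1(x)|\,|f_2(z^2-x)|$ and applying Cauchy--Schwarz to the inner expectation over $x$. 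The point is that for fixed $z$ the map $x\mapsto z^2-x$ is a bijection of $\fp$, so $\mb{E}_x|f_2(z^2-x)|^2=\|f_2\|_2^2$ uniformly in $z$; therefore $\mb{E}_x\,|f_1(x)|\,|f_2(z^2-x)|\le\|f_1\|_2\|f_2\|_2$ for every $z$. Substituting this back gives $|T(f_1,f_2,f_3)|\le\|f_1\|_2\|f_2\|_2\cdot\mb{E}_z|f_3(z)|=\|f_1\|_2\|f_2\|_2\|f_3\|_1$, and one concludes with the elementary nesting $\|f_3\|_1\le\|f_3\|_2$ (Cauchy--Schwarz of $|f_3|$ against the constant function $1$).

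There is no real obstacle here; the only thing that requires a moment's care is performing the Cauchy--Schwarz step in the order that isolates the $z$-variable first, so that the substitution $x\mapsto z^2-x$ makes the bound on the inner sum independent of $z$. The argument uses nothing about $p$ being prime and nothing about $T$ beyond unique solubility for $y$, and symmetry in $f_1$ and $f_2$ is built in automatically.
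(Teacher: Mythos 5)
Your argument is correct and is essentially identical to the paper's proof: both substitute $y=z^2-x$ to write $T$ as $\mb{E}_{x,z}f_1(x)f_2(z^2-x)f_3(z)$, apply the triangle inequality and Cauchy--Schwarz in $x$ for fixed $z$ using the bijection $x\mapsto z^2-x$, and finish with $\|f_3\|_1\le\|f_3\|_2$. Nothing further is needed.
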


Proposition \ref{normbound_prop} and Lemma \ref{l2_bound_lemma} will be used in the proof of Theorem \ref{thm:main} at the end of \S \ref{sec:main}. It is the $u_3$-norm in Proposition \ref{normbound_prop} that decides the correct form of the Regularity lemma, and thus also the other key lemmas in \S \ref{sec:main}.

\section{Quadratic systems and trigonometric polynomials}\label{sec:quad_trig}
Following the approach in \cite{green-sanders}, we make the following definitions. To begin with, write $\mb{G} = (\mb{R}/\mb{Z})\times (\mb{R}/\mb{Z})$. The group operation in $\mb{G}$ will be denoted by $+$. 

\begin{defn}
A quadratic system of dimension $d$ is a map $\Psi:\fp\to \mb{G}^d$ of the form
\begin{equation*}
\Psi(x) = (a_i x^2/p,a_i x/p)_{i=1}^d,
\end{equation*}
where $(a_i)_{i=1}^d\subset \fp^d$.
\end{defn}

If $\Psi$ is a quadratic system we write
\begin{equation}
\Lambda_\Psi \coloneqq \big\{ \xi\in\mb{Z}^d: \xi_1a_1+\cdots+\xi_da_d \equiv 0\Mod p \big\},
\label{lambda}
\end{equation}
where $(a_i)$ are the coefficients appearing in the definition of $\Psi$. In other words, $\Lambda_\Psi$ is a lattice encoding the linear relations between the $a_i$ modulo $p$. Note that $\Lambda_\Psi$ is a lattice of full rank, as $p\mb{Z}^d\subset \Lambda_\Psi$. Further, define the closed subgroup
\begin{equation}
G_\Psi \coloneqq \big\{ g\in (\mb{R}/\mb{Z})^d: \xi\cdot g = 0 \text{ for all } \xi\in\Lambda_\Psi \big\}
\label{eq:Gpsi}
\end{equation}
of $(\mb{R}/\mb{Z})^d$. We also write $H_\Psi = G_\Psi\times G_\Psi$. Note that we now can think of $\Psi$ as a map $\mb{F}_p \to H_\Psi$. 

\begin{lem}\label{duality_lem}
Let $\Lambda_\Psi$ and $G_\Psi$ be defined as in \eqref{lambda} and \eqref{eq:Gpsi} respectively, and suppose that $\lambda\in \mb{Z}^d$ satisfies $\lambda\cdot g=0$ for all $g\in G_\Psi$. Then $\lambda\in \Lambda_\Psi$.
\begin{proof}
Note that $\xi_1a_1+\dots+\xi_da_d \equiv 0 \Mod p$ is the same as saying $\xi\cdot a/p = 0$ in $\mb{R}/\mb{Z}$. It is thus clear that $a/p\in G_\Psi$, and so by assumption $\lambda \cdot a/p = 0$. This is again equivalent to saying $\lambda \cdot a \equiv 0 \Mod p$, and so we have $\lambda\in \Lambda_\Psi$.
\end{proof}
\end{lem}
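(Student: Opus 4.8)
The plan is to prove the contrapositive-flavoured statement directly via the standard Pontryagin-type duality between the lattice $\Lambda_\Psi$ and the closed subgroup $G_\Psi$ it cuts out. The key observation, already extracted in the proof sketch, is that the vector $a/p = (a_1/p,\dots,a_d/p)$ lies in $G_\Psi$: by definition of $\Lambda_\Psi$, for any $\xi\in\Lambda_\Psi$ we have $\xi_1 a_1 + \cdots + \xi_d a_d \equiv 0 \Mod p$, which is precisely the statement $\xi\cdot(a/p) = 0$ in $\mb{R}/\mb{Z}$. Hence $a/p\in G_\Psi$.

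Given this, the argument is short. Suppose $\lambda\in\mb{Z}^d$ satisfies $\lambda\cdot g = 0$ in $\mb{R}/\mb{Z}$ for every $g\in G_\Psi$. Applying this to the particular element $g = a/p\in G_\Psi$ gives $\lambda\cdot(a/p) = 0$ in $\mb{R}/\mb{Z}$, i.e.\ $\lambda_1 a_1 + \cdots + \lambda_d a_d \equiv 0\Mod p$, which is exactly the defining condition for membership in $\Lambda_\Psi$. Therefore $\lambda\in\Lambda_\Psi$, as required. No approximation or harmonic analysis is needed; the whole content is that testing the annihilation condition against the single element $a/p$ already suffices.

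There is no real obstacle here: the only thing to be careful about is the direction of the duality. The general annihilator principle would say that $\{\lambda : \lambda\cdot g = 0 \ \forall g\in G_\Psi\}$ is the closed subgroup of $\mb{Z}^d$ generated (topologically) by $\Lambda_\Psi$ together with whatever is needed to make $G_\Psi$ the full annihilator — but because $\Lambda_\Psi$ is already a finite-index sublattice of $\mb{Z}^d$ (it contains $p\mb{Z}^d$), there is no slack, and one does not need to invoke Pontryagin duality in its general form. Reducing the test set to the single point $a/p$ is what makes the proof elementary, and it works precisely because $\Lambda_\Psi$ was defined as the set of integer relations among the $a_i$ modulo $p$ in the first place.
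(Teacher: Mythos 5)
Your proof is correct and is essentially identical to the paper's argument: both observe that $a/p\in G_\Psi$ and then test the hypothesis $\lambda\cdot g=0$ against this single element to conclude $\lambda\cdot a\equiv 0\Mod p$, i.e.\ $\lambda\in\Lambda_\Psi$. No further comments needed.
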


The above allows us to prove a useful orthogonality relation.
\begin{lem}\label{orthog_lemma}
It holds that
\begin{equation}
\int e(\xi\cdot t) \: d\mu_{G_\Psi}(t) = 1_{\Lambda_\Psi} (\xi).
\end{equation}
\begin{proof}
If $\xi\in\Lambda_\Psi$ it is clear that the above integral is $1$. Suppose instead that the integral is nonzero. Take $g\in G_\Psi$ and make the substitution $t\mapsto t+g$, which preserves Haar measure, to get
\begin{equation*}
\int e(\xi\cdot t)\: d\mu_{G_\Psi} (t) = e(\xi\cdot g) \int e(\xi\cdot t)\:d\mu_{G_\Psi}(t).
\end{equation*}
This gives $\xi\cdot g=0$ in $\mb{R}/\mb{Z}$, and as this holds for any $g\in G_\Psi$ it follows from Lemma \ref{duality_lem} that $\xi\in \Lambda_\Psi$.
\end{proof}
\end{lem}

In addition to the notion of quadratic systems we will need the notion of trigonometric polynomials, and the trig-norm, as defined next.

\begin{defn}
Let $F:\mb{G}^d\to \mb{C}$ be a function. We write $\|F\|_\mathrm{trig}$ for the smallest $M$ such that $F$ has a Fourier expansion
\begin{equation}
F(\theta_1,\theta_2) = \sum_{\|\xi_1\|_1, \|\xi_2\|_1\le M} \hat{F}(\xi_1,\xi_2) e(\xi_1\cdot \theta_1+\xi_2\cdot\theta_2)
\label{trig}
\end{equation}
and $\sum_{\xi_1,\xi_2}|\hat{F}(\xi_1,\xi_2)|\le M$. If $\|F\|_\mathrm{trig}<\infty$ we say that $F$ is a \emph{trigonometric polynomial}.
\end{defn}
The Fourier coefficients appearing in the above definition are given by
\begin{equation}
\hat{F}(\xi_1,\xi_2) = \int F(\theta_1,\theta_2) e(-\xi_1\cdot\theta_1-\xi_2\cdot\theta_2)\: d\theta_1d\theta_2.
\label{F_fourier}
\end{equation}
We also note that the trigonometric polynomials are dense in $C(\mb{G}^d)$, the space of continuous, complex valued functions on $\mb{G}^d$.

The following proposition can be thought of as the most basic form of the counting lemma presented in \S \ref{sec:counting}, and will be used to prove Lemma \ref{H_approx_psi} on approximating points in $H_\Psi$ by points in the image of $\Psi$.

\begin{lem}\label{lem:baby_count}
Let $\Psi$ be a quadratic system of dimension $d$, and let $F:\mb{G}^d\to \mb{C}$ be a trigonometric polynomial. Then
\begin{equation*}
\mb{E}_x F\circ \Psi(x) = \int F\:d\mu_{H_\Psi} + O(\|F\|_{\mathrm{trig}} p^{-1/2}).
\end{equation*}
\end{lem}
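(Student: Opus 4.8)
The plan is to expand $F$ in its Fourier series \eqref{trig}, push $\Psi$ through it, and recognise the resulting exponential sums as normalised quadratic Gauss sums. Write $M=\|F\|_{\mathrm{trig}}$ and let $a=(a_i)_{i=1}^d$ be the coefficients of $\Psi$. Then
\[
F\circ\Psi(x) = \sum_{\xi_1,\xi_2} \hat F(\xi_1,\xi_2)\, e_p\big((\xi_1\cdot a)x^2 + (\xi_2\cdot a)x\big),
\]
where the sum runs over the finitely many $\xi_1,\xi_2$ with $\|\xi_1\|_1,\|\xi_2\|_1\le M$. Taking $\mb{E}_x$ and interchanging the (finite) sum with the expectation, everything reduces to understanding $\mb{E}_x e_p(ux^2+vx)$ for $u,v\in\fp$.

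The key input is the classical Gauss sum estimate: if $u\equiv 0\Mod p$ then $\mb{E}_x e_p(ux^2+vx)$ equals $1$ when $v\equiv 0\Mod p$ and $0$ otherwise, while if $u\nequiv 0\Mod p$ then, completing the square, $|\mb{E}_x e_p(ux^2+vx)| = p^{-1/2}$. Now the condition $u=\xi_1\cdot a\equiv 0\Mod p$ is precisely $\xi_1\in\Lambda_\Psi$, and similarly for $\xi_2$. Hence: the terms with $\xi_1\in\Lambda_\Psi$ and $\xi_2\in\Lambda_\Psi$ contribute exactly $\sum_{\xi_1\in\Lambda_\Psi,\,\xi_2\in\Lambda_\Psi}\hat F(\xi_1,\xi_2)$; the terms with $\xi_1\in\Lambda_\Psi$ but $\xi_2\notin\Lambda_\Psi$ contribute $0$; and every remaining term is $O\big(|\hat F(\xi_1,\xi_2)|\,p^{-1/2}\big)$. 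Summing these errors and using $\sum_{\xi_1,\xi_2}|\hat F(\xi_1,\xi_2)|\le M$, the total error is $O(Mp^{-1/2})$ with \emph{absolute} implied constant.

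It remains to identify the main term with $\int F\,d\mu_{H_\Psi}$. Since $H_\Psi=G_\Psi\times G_\Psi$, Haar measure factors as a product, so expanding $F$ again and integrating term by term gives
\[
\int F\,d\mu_{H_\Psi} = \sum_{\xi_1,\xi_2}\hat F(\xi_1,\xi_2)\left(\int e(\xi_1\cdot t)\,d\mu_{G_\Psi}(t)\right)\left(\int e(\xi_2\cdot t)\,d\mu_{G_\Psi}(t)\right),
\]
and by Lemma \ref{orthog_lemma} each bracketed integral equals $1_{\Lambda_\Psi}(\xi_i)$. This leaves exactly $\sum_{\xi_1\in\Lambda_\Psi,\,\xi_2\in\Lambda_\Psi}\hat F(\xi_1,\xi_2)$, matching the main term above, which completes the proof.

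There is no genuine obstacle here: the only non-formal ingredient is the modulus-$\sqrt p$ bound for a non-degenerate quadratic Gauss sum, which is classical. The only points requiring care are that every sum in sight is finite because $F$ is a trigonometric polynomial, so the interchanges of summation with $\mb{E}_x$ and with integration over $H_\Psi$ are unproblematic, and that the error constant is truly independent of $d$ — which holds because $\sum_{\xi_1,\xi_2}|\hat F(\xi_1,\xi_2)|$ is bounded directly by $\|F\|_{\mathrm{trig}}$ rather than by the number of frequencies involved.
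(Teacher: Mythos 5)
Your argument is correct and is essentially the paper's own proof: expand $F$ in its (finite) Fourier series, discard the terms with $\xi_1\cdot a\nequiv 0$ or $\xi_2\cdot a\nequiv 0$ using the standard quadratic Gauss sum bound $\mb{E}_x e_p(ux^2+vx)\ll p^{-1/2}$ at a total cost of $O(\|F\|_{\mathrm{trig}}p^{-1/2})$, and identify the surviving sum $\sum_{\xi_1,\xi_2\in\Lambda_\Psi}\hat F(\xi_1,\xi_2)$ with $\int F\,d\mu_{H_\Psi}$ via Lemma \ref{orthog_lemma}. Your extra observations (exact vanishing when $\xi_1\in\Lambda_\Psi$, $\xi_2\notin\Lambda_\Psi$, and the $d$-independence of the implied constant) are refinements of, not departures from, the same argument.
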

We give the proof of Lemma \ref{lem:baby_count} in \S \ref{sec:counting}, as the proof is just a much easier version of the proof of the counting lemma.

At this point we need to establish a notion of absolute value of points in $\mb{G}^d$. For a point $(\theta,\phi)\in \mb{G}$ we write $|(\theta,\phi)| \coloneqq \max \{ \|\theta\|_{\mb{R}/\mb{Z}}, \|\phi\|_{\mb{R}/\mb{Z}} \}$, where $\| x \|_{\mb{R}/\mb{Z}}$ is the distance from $x$ to the nearest integer. We then let $|\eta| = \max_{i=1}^d\{|\eta_i|\}$ for $\eta\in \mb{G}^d$. 

Before proving our final result of the section we will need the following result from \cite{taovu2010}.
\begin{lem}\label{lem:taovu}
For $\epsilon\ge 0$ it holds that
\begin{equation*}
\mu_{H_\Psi}\big( \{ x\in H_\Psi : |x|\le \epsilon\} \big) \ge \epsilon^{2d}.
\end{equation*}
\end{lem}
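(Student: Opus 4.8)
The plan is to derive this from the orthogonality relation of Lemma~\ref{orthog_lemma} by a double counting argument: I would integrate a carefully chosen non-negative test function over $H_\Psi$ and estimate the integral both from above (by the measure we want to bound) and from below (by expanding in characters and throwing away all but the trivial one). First I would dispose of the trivial ranges: for $\eps=0$ there is nothing to prove, and for $\eps\ge 1/2$ the set $\{x\in H_\Psi:|x|\le\eps\}$ is all of $H_\Psi$, of measure $1\ge\eps^{2d}$ (using $\eps\le1$), so I may assume $0<\eps<1/2$. I would also record at the outset that $H_\Psi=G_\Psi\times G_\Psi$ is a closed subgroup of $(\mb{R}/\mb{Z})^{2d}$, that $\mu_{H_\Psi}=\mu_{G_\Psi}\times\mu_{G_\Psi}$, and that under this identification $|x|$ equals $\|x\|_\infty\coloneqq\max_j\|x_j\|_{\mb{R}/\mb{Z}}$, the maximum running over all $2d$ coordinates.

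Next I would set up the test function. Let $I=[-\eps/2,\eps/2]\subset\mb{R}/\mb{Z}$, let $\phi=1_I*1_I$ be the associated tent function on $\mb{R}/\mb{Z}$, and put $\Phi(x)=\prod_{j=1}^{2d}\phi(x_j)$ on $(\mb{R}/\mb{Z})^{2d}$. The four properties I need are: $\Phi\ge 0$; $\Phi$ is supported on $\{x:\|x_j\|_{\mb{R}/\mb{Z}}\le\eps\text{ for all }j\}=\{x:|x|\le\eps\}$; one has $\|\Phi\|_\infty=\eps^{2d}$ (since $\|\phi\|_\infty=\phi(0)=|I|=\eps$); and, crucially, every Fourier coefficient $\hat\Phi(\xi)=\prod_j\hat\phi(\xi_j)$ is \emph{non-negative}, with $\hat\Phi(0)=\eps^{4d}$. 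The non-negativity of the $\hat\phi(k)$ is the whole reason for convolving an interval with itself: since $1_I$ is real and even, $\hat{1_I}(k)=\int_{-\eps/2}^{\eps/2}e(-kx)\,dx$ is real, so $\hat\phi(k)=\hat{1_I}(k)^2\ge0$; and $\hat\phi(0)=(\int 1_I)^2=\eps^2$. (Absolute summability of the Fourier coefficients, needed to justify the termwise integration below, follows from Parseval.)

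Then I would run the double count. On the one hand, since $\Phi\ge 0$ vanishes off $\{|x|\le\eps\}$,
\[
\int_{H_\Psi}\Phi\,d\mu_{H_\Psi}\le\|\Phi\|_\infty\,\mu_{H_\Psi}\big(\{x\in H_\Psi:|x|\le\eps\}\big)=\eps^{2d}\,\mu_{H_\Psi}\big(\{x\in H_\Psi:|x|\le\eps\}\big).
\]
On the other hand, expanding $\Phi$ in its absolutely convergent Fourier series and integrating term by term, the orthogonality relation $\int_{H_\Psi}e(\xi\cdot x)\,d\mu_{H_\Psi}(x)=1_{\Lambda_\Psi\times\Lambda_\Psi}(\xi)$ — which follows from Lemma~\ref{orthog_lemma} applied in each of the two $G_\Psi$ factors — annihilates every character not trivial on $H_\Psi$ and leaves $\int_{H_\Psi}\Phi\,d\mu_{H_\Psi}=\sum_{\xi\in\Lambda_\Psi\times\Lambda_\Psi}\hat\Phi(\xi)$. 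Since every term here is non-negative and $0\in\Lambda_\Psi\times\Lambda_\Psi$, this is at least $\hat\Phi(0)=\eps^{4d}$. Comparing the two bounds gives $\mu_{H_\Psi}(\{x:|x|\le\eps\})\ge\eps^{4d}/\eps^{2d}=\eps^{2d}$, as required.

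The only step calling for any ingenuity is the construction of $\Phi$: one needs a non-negative function supported in the $\eps$-cube whose Fourier transform is again non-negative (so the Fourier side is bounded below by its mean) and for which the ratio $\hat\phi(0)/\|\phi\|_\infty$ is exactly $\eps$ (so the final constant comes out as $1$ rather than something weaker), and the self-convolution $1_I*1_I$ is tailored to satisfy all of these requirements at once. Everything else — the identification of $|\cdot|$ with $\|\cdot\|_\infty$, the two elementary estimates, and the bookkeeping of Fourier coefficients — is routine. (Since the statement is attributed to \cite{taovu2010}, one could of course also simply invoke it.)
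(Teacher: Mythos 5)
Your proof is correct, but it takes a different route from the paper: the paper does not prove the bound at all, it simply checks the hypotheses of \cite[Lemma 4.20]{taovu2010} (Bohr sets are large), taking the ambient group to be $H_\Psi$ and $S$ to be the $2d$ coordinate characters $\xi_i,\zeta_i$, whereas you give a self-contained Fej\'er-kernel argument. Your argument is essentially the standard proof of that cited lemma: the self-convolution $\phi=1_I*1_I$ is exactly the device that makes all Fourier coefficients non-negative (so the integral over $H_\Psi$ is bounded below by the zero mode $\eps^{4d}$, via the orthogonality relation of Lemma \ref{orthog_lemma} applied in each $G_\Psi$ factor) while keeping $\|\Phi\|_\infty=\eps^{2d}$, which is what yields the clean constant $\eps^{2d}$ rather than the weaker $(\eps/2)^{2d}$ a naive pigeonhole would give. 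All the supporting details check out: the identification of $|\cdot|$ with the sup-norm over the $2d$ coordinates, $\mu_{H_\Psi}=\mu_{G_\Psi}\times\mu_{G_\Psi}$, absolute summability of $\hat\Phi$ justifying termwise integration, and the reduction to $0<\eps<1/2$ (your implicit restriction to $\eps\le 1$ is harmless and in fact necessary, since the statement as printed would fail for $\eps>1$; in the paper's application, Lemma \ref{H_approx_psi}, only $\eps\in(0,1]$ occurs). So what you buy is a proof with no external dependence, at the cost of reproving a textbook lemma the paper prefers to quote.
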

\begin{proof}
This follows directly from \cite[Lemma 4.20]{taovu2010}. Indeed, in the notation of the lemma we take the ambient group $Z$ to be $H_\Psi$ and set $S=\{\xi_1,\dots,\xi_d,\zeta_1,\dots,\zeta_d\}$ where $\xi_i\cdot (\theta,\phi) = \theta_i$ and $\zeta_i\cdot(\theta,\phi)=\phi_i$.
\end{proof}

The next lemma tells us that any point in $H_\Psi$ can be well approximated by $\Psi(x)$ for some value of $x$, provided $p$ is large, and so in some sense we are saying that $H_\Psi$ is well approximated by the image of $\Psi$. This result will be needed in the proof of Proposition \ref{combination_prop}.
\begin{lem}[{Cf. \cite[Corollary 3.4]{green-sanders}}]\label{H_approx_psi}
There is a function $p_1:\Z_{\ge 0}\times (0,1]\to \R_{\ge 0}$ such that the following holds. For any $d$-dimensional quadratic system $\Psi$ and $h \in H_\Psi$ we have
\begin{equation*}
\# \{x\in \mb{F}_p : |\Psi(x)-h|\le \epsilon \} \ge \frac{1}{8}\left( \frac{\epsilon}{2} \right)^{2d}p
\end{equation*}
provided $p\ge p_1(d,\epsilon)$.
\end{lem}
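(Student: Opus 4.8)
The plan is to detect the desired $x$ via a suitable trigonometric polynomial and apply the counting lemma (Lemma~\ref{lem:baby_count}) together with the measure lower bound (Lemma~\ref{lem:taovu}). Concretely, fix $h\in H_\Psi$ and consider the translated "bump" concentrated near $h$. Since the indicator of the ball $\{y\in\mb{G}^d:|y-h|\le\epsilon\}$ is not a trigonometric polynomial, I would first replace it by a smooth majorant/minorant. Start with the one-dimensional Fej\'er kernel $K_N(\theta)=\sum_{|k|\le N}(1-|k|/N)e(k\theta)$, which is nonnegative, has $\int K_N=1$, satisfies $\|K_N\|_{\mathrm{trig}}\le 1$ (its Fourier coefficients are nonnegative and sum to $1$), and is $\ge cN$ on an interval of length $\sim 1/N$ around $0$ while being small outside. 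Taking an appropriate product of shifted Fej\'er kernels in the $2d$ coordinates (centred at the coordinates of $h$), with $N\sim 1/\epsilon$, produces a trigonometric polynomial $F$ on $\mb{G}^d$ with $F\ge 0$, $\int F\,d\mu_{\mb{G}^d}=1$, $\|F\|_{\mathrm{trig}}=O_d(\epsilon^{-O(d)})$, and $F$ supported (up to negligible tails) on $\{|y-h|\le\epsilon\}$.

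Next I would lower bound $\int F\,d\mu_{H_\Psi}$. Since $H_\Psi$ is a closed subgroup of $\mb{G}^d$ and $h\in H_\Psi$, the translate $y\mapsto F(y)$ restricted to $H_\Psi$ is still nonnegative, and on the sub-ball $\{|y-h|\le\epsilon/2\}\cap H_\Psi$ it is bounded below by a constant times the product of the peak values of the Fej\'er kernels; using $h\in H_\Psi$ to translate, Lemma~\ref{lem:taovu} gives $\mu_{H_\Psi}(\{|y-h|\le\epsilon/2\})\ge(\epsilon/2)^{2d}$. Choosing the normalisation of the kernels so that the peak contribution beats the $\mu$-measure loss, I get $\int F\,d\mu_{H_\Psi}\ge \tfrac14(\epsilon/2)^{2d}$, say (the exact constants can be tuned to land on $\tfrac18(\epsilon/2)^{2d}$ after the error term is absorbed). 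Then Lemma~\ref{lem:baby_count} gives
\[
\mb{E}_x F\circ\Psi(x)=\int F\,d\mu_{H_\Psi}+O(\|F\|_{\mathrm{trig}}p^{-1/2})\ge \tfrac14\Big(\tfrac{\epsilon}{2}\Big)^{2d}-O_d\big(\epsilon^{-O(d)}p^{-1/2}\big),
\]
which is $\ge\tfrac18(\epsilon/2)^{2d}$ once $p\ge p_1(d,\epsilon)$ for a suitable function $p_1$ chosen to make the error at most half the main term. Since $F$ is (essentially) supported on $\{|y-h|\le\epsilon\}$ and pointwise bounded by $O_d(\epsilon^{-2d})$ there, $\mb{E}_x F\circ\Psi(x)\le O_d(\epsilon^{-2d})\cdot\#\{x:|\Psi(x)-h|\le\epsilon\}/p$, and rearranging yields the claimed lower bound on $\#\{x:|\Psi(x)-h|\le\epsilon\}$ (after cleaning up constants).

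The main technical obstacle is the interplay between the smoothing and the two measures: I need a trigonometric polynomial that is simultaneously (i) large on a ball where I can apply Lemma~\ref{lem:taovu} for the $H_\Psi$-mass, (ii) small enough outside an $\epsilon$-ball that the tails do not contaminate the final counting bound, and (iii) of controlled $\mathrm{trig}$-norm so that the error term $O(\|F\|_{\mathrm{trig}}p^{-1/2})$ in Lemma~\ref{lem:baby_count} is negligible for $p$ large. The Fej\'er kernel handles (i) and (iii) cleanly because of its explicit nonnegative Fourier expansion, but (ii) requires either using a high enough power of the kernel to make the tails decay, or a two-sided (majorant/minorant) argument in the spirit of the Beurling--Selberg construction. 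I expect that taking a fixed power $K_N^m$ with $m=m(d)$ large enough, or equivalently replacing the linear Fej\'er weight by a smoother one, suffices; the dependence of $p_1(d,\epsilon)$ on $\epsilon$ then comes out polynomial in $\epsilon^{-1}$ with the power depending on $d$, which is more than adequate for the applications.
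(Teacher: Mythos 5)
Your overall skeleton is the same as the paper's: detect the ball around $h$ with a trigonometric polynomial, lower-bound its integral over $H_\Psi$ via Lemma \ref{lem:taovu} and translation by $h\in H_\Psi$, transfer to $\mb{E}_x F\circ\Psi(x)$ by Lemma \ref{lem:baby_count}, and convert the average into a count. The genuine gap is exactly at the step you flag as the ``main technical obstacle'' and then defer: with a nonnegative, $L^1$-normalised product of Fej\'er kernels (height $\asymp\epsilon^{-1}$ in each of the $2d$ coordinates), the contribution to $\mb{E}_x F\circ\Psi(x)$ from $x$ with $|\Psi(x)-h|>\epsilon$ is controlled only by $\sup\{F(y):|y-h|>\epsilon\}$, and for a product kernel this sup is attained with one coordinate in the Fej\'er tail (value $\asymp\epsilon^{-1}$ at distance $\epsilon$ when $N\asymp\epsilon^{-1}$) and the other $2d-1$ coordinates at their peaks ($\asymp\epsilon^{-1}$ each). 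That is comparable to the global maximum $\asymp\epsilon^{-2d}$, while your main term $\int F\,d\mu_{H_\Psi}$ is only $O_d(1)$ (indeed exponentially small in $d$). So the ``tails'' are not negligible: you need the off-ball supremum to be $\lesssim_d\epsilon^{2d-1}$, and a \emph{fixed} power $K_N^m$ with $m=m(d)$ only improves the $\asymp\epsilon^{-1}$ tail by a constant factor depending on $m$, so $m$ would have to grow like $d\log(1/\epsilon)$, or you need a genuine multidimensional two-sided (Selberg-type) construction — which is precisely the nontrivial input you have not supplied. A secondary issue: even granting the tails, the normalised-kernel method gives $\#\{x:|\Psi(x)-h|\le\epsilon\}\ge c\,(\mathrm{plateau}/\mathrm{peak})^{2d}(\epsilon/2)^{2d}p$, and for Fej\'er the plateau-to-peak ratio is bounded away from $1$, so the constant degrades exponentially in $d$ and cannot simply be ``tuned'' to the stated $\frac18(\epsilon/2)^{2d}$ (a weaker constant would still serve Proposition \ref{combination_prop}, which only needs one point, but it is not the bound claimed).

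The paper avoids both problems with a \emph{signed} detector: by density of trigonometric polynomials in $C(\mb{G}^d)$ it takes $F$ with $-\delta\le F\le 2$, $F\ge 1$ on $\{|\theta-h|\le\epsilon/2\}$ and $F\le 0$ on $\{|\theta-h|>\epsilon\}$, with $\|F\|_{\mathrm{trig}}$ bounded in terms of $\epsilon,\delta,d$. Then Lemma \ref{lem:taovu} gives $\int F\,d\mu_{H_\Psi}\ge(\epsilon/2)^{2d}-\delta$, and, crucially, since $F$ is nonpositive off the $\epsilon$-ball and bounded by the absolute constant $2$, one has $\mb{E}_x F\circ\Psi(x)\le 2\,\mu_{\fp}(\{x:|\Psi(x)-h|\le\epsilon\})$ with no tail term at all; taking $\delta=\frac12(\epsilon/2)^{2d}$ and $p\ge p_1(d,\epsilon)$ lands exactly on $\frac18(\epsilon/2)^{2d}$. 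To repair your argument, replace the nonnegative kernel by such a two-sided bump (or actually construct a $2d$-dimensional minorant with quantitative tails); as written, the upper-bound step does not go through.
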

\begin{proof}
Fix $h\in H_\Psi$. We can find a trigonometric polynomial $F$ satisfying $-\delta \le F \le 2$ on $\mb{G}^d$, $F(\theta)\le 0$ for $|\theta-h|> \epsilon$ and $F(\theta)\ge 1$ for $|\theta-h|\le \epsilon/2$. Furthermore $\|F\|_\mathrm{trig}$ is bounded in terms of $\epsilon,\delta$ and $d$. Applying Lemma \ref{lem:taovu} and using translation invariance of Haar measure we have
\begin{equation*}
\int F\: d\mu_{H_\Psi} = \int F(\theta-h)\: d\mu_{H_\Psi}(\theta) \ge \left(\frac{\epsilon}{2}\right)^{2d} - \delta.
\end{equation*}
We set $\delta = \frac{1}{2}\left( \frac{\epsilon}{2} \right)^{2d}$ and note that $\mb{E}_x F\circ \Psi (x) \le 2 \mu_{\mb{F}_p} (\{x: |\Psi(x)-h|\le \epsilon\})$, as $F\le 2$ everywhere. By invoking Lemma \ref{lem:baby_count} and choosing $p$ larger than some function $p_1(d,\epsilon)$ we then have
\begin{equation*}
\mu_{\mb{F}_p}(\{x: |\Psi(x)-h|\le \epsilon\}) \ge \frac{1}{2}\left( \left( \frac{\epsilon}{2}\right)^{2d} - \frac{1}{2}\left( \frac{\epsilon}{2} \right)^{2d} \right) - \frac{1}{8}\left( \frac{\epsilon}{2} \right)^{2d} = \frac{1}{8}\left( \frac{\epsilon}{2} \right)^{2d}.
\end{equation*}
\end{proof}

\section{Proof of main theorem for the case $x+y=z^2$}\label{sec:main}
In order to prove Theorem \ref{thm:main} we will make use of the following three key lemmas.

\begin{lem}[Regularity lemma]\label{lem:regularity_lemma}
There are functions $p_3,M:\mb{Z}_{\ge 0}\times (0,1]\to \mb{R}_{\ge 0}$ such that the following holds. Let $c:\fp \to [r]$ be an $r$-colouring of $\fp$ and let $\eps>0$. Then there is a quadratic system $\Psi$ of dimension $d$, functions $F_1,\dots,F_r: \mb{G}^d\to \mb{R}_{\ge 0}$, and functions $g_1,\dots,g_r:\fp \to [-1,1]$, such that the following holds provided $p\ge p_3(r,\epsilon)$.
\begin{enumerate}[(i)]
\item $\|F_i\|_\mathrm{trig} \le M(r,\epsilon)$ for all $i$;\\
\item $d\le 8 r \epsilon^{-2}+1$;\\
\item $\|F_i\circ \Psi-g_i\|_2\le \epsilon$ for all $i$;\\
\item $\|1_{c^{-1}(i)}-g_i\|_{u_3} \le \epsilon$ for all $i$;\\
\item $\sum_i F_i\circ \Psi\ge 1$ pointwise.\\
\end{enumerate}
\end{lem}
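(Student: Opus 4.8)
The plan is to run the standard energy-increment argument adapted to the $u_3$-norm, using quadratic systems as the analogue of the "structured" part. I would start with a trivial quadratic system $\Psi$ (say $d=0$ or $d=1$ with $a_1=0$) and trivial approximants, and then iterate. At each stage I have a quadratic system $\Psi$ of dimension $d$ and, for each colour $i$, a function $g_i$ built as $F_i\circ\Psi$ for some bounded-trig-norm $F_i$; one measures progress by the energy $\sum_i \|g_i\|_2^2$ (or equivalently $\sum_i \|F_i\circ\Psi\|_2^2$). If condition (iv) already holds for every $i$ we stop. Otherwise there is some colour $j$ and some $a,b\in\fp$ with $|\mb{E}_x(1_{c^{-1}(j)}(x)-g_j(x))e_p(ax^2+bx)|>\epsilon$. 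I would then enlarge the system by adjoining the coefficient $a$ (so $d$ increases by one, the new coordinate tracking $ax^2/p, ax/p$), and redefine each $g_i$ to be the conditional expectation of $1_{c^{-1}(i)}$ onto the $\sigma$-algebra generated by $\Psi$ together with the new character $e_p(ax^2+bx)$ — concretely, a suitable average of $1_{c^{-1}(i)}$ over the level sets of the enlarged quadratic data, which can be written as $F_i\circ\Psi'$ for the new $\Psi'$ with $\|F_i\|_{\mathrm{trig}}$ controlled.

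**Key steps, in order.** First, set up the iteration scheme with the energy $\sum_i\|g_i^{(n)}\|_2^2$, which lies in $[0,r]$ and is non-decreasing. Second, prove the energy-increment step: if (iv) fails for colour $j$ via the frequency $(a,b)$, then projecting onto the refined structure increases the total energy by at least $\epsilon^2$ (this is Pythagoras: the new $g_j$ is the old one plus a nonzero orthogonal component of $\ell^2$-norm $>\epsilon$, and the projections of the other colours only gain energy). Hence the process terminates after at most $r\epsilon^{-2}$ steps, giving $d\le r\epsilon^{-2}+1$ — one needs to double-check the exact constant to land on the claimed bound $d\le 8r\epsilon^{-2}+1$; I suspect the factor of $8$ comes from being slightly lossy when expressing the conditional expectation as a genuine trigonometric polynomial $F_i$ of the allowed form, or from the cost of adjoining both $ax^2$ and $bx$ data. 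Third, verify that at termination each $g_i=F_i\circ\Psi$ with $\|F_i\|_{\mathrm{trig}}\le M(r,\epsilon)$ — the trig-norm is bounded because the level-set indicator of a bounded-dimension quadratic system is itself a trig polynomial of bounded norm, and averaging $1_{c^{-1}(i)}$ against it preserves this with the $\ell^1$ mass of Fourier coefficients under control. Fourth, establish (iii): $F_i\circ\Psi$ is precisely the conditional expectation, so $\|F_i\circ\Psi - g_i\|_2 = 0$ in fact, or $\le\epsilon$ if one defines $g_i$ slightly differently; either way this is immediate from the construction. Fifth, establish (v): $\sum_i g_i = \sum_i \mb{E}(1_{c^{-1}(i)}\mid \mathcal{B}) = \mb{E}(\sum_i 1_{c^{-1}(i)}\mid\mathcal{B}) = \mb{E}(1\mid\mathcal{B}) = 1$ pointwise, since the colour classes partition $\fp$; so $\sum_i F_i\circ\Psi\ge 1$ (with equality) provided we take $g_i = F_i\circ\Psi$. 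Finally, bound the range of $g_i$: each conditional expectation of a $\{0,1\}$-valued function lies in $[0,1]\subset[-1,1]$, giving the stated codomain.

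**Main obstacle.** The delicate point is making the conditional-expectation heuristic rigorous in a way that simultaneously (a) yields a genuine trigonometric polynomial $F_i$ with $\|F_i\|_{\mathrm{trig}}$ bounded \emph{uniformly in $p$}, and (b) matches the exact dimension bound in (ii). The issue is that $\fp$ is finite, so the "$\sigma$-algebra generated by $\Psi$" is a finite partition whose indicator functions are only approximately trigonometric polynomials on $\mb{G}^d$ — one must either work with an honest smoothing of the level-set indicators (introducing an extra parameter and hence the constant $M(r,\epsilon)$, and presumably the $8$), or appeal to Lemma \ref{lem:baby_count} and Lemma \ref{H_approx_psi} to transfer between $\mb{E}_x F\circ\Psi(x)$ and integration over $H_\Psi$, controlling the error by taking $p\ge p_3(r,\epsilon)$. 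I would handle this by following the Green–Sanders construction closely: build $F_i$ as a fixed bump-function average adapted to the quadratic data, verify its trig-norm is a function of $\epsilon$ and $d$ alone (hence of $r,\epsilon$ via (ii)), and absorb all $p$-dependent errors into the hypothesis $p\ge p_3(r,\epsilon)$, which is exactly why the conclusion is only asserted for large $p$.
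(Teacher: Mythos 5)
Your overall scheme---an energy increment driven by failures of (iv), followed by expressing the resulting projections as trigonometric polynomials---is the same as the paper's (Lemmas \ref{u3bad_projbad_lemma}, \ref{koopman_von_neumann}, \ref{trig_lem}), but as written it has a genuine gap at its core: you never fix the resolution at which the conditional expectation is taken. The $\sigma$-algebra generated by $\Psi$ itself is, for any nonzero coefficient $a_i$, the discrete $\sigma$-algebra on $\fp$, because $x\mapsto a_ix/p$ is injective; so ``the conditional expectation of $1_{c^{-1}(i)}$ onto the $\sigma$-algebra generated by $\Psi$ together with the new character'' is just $1_{c^{-1}(i)}$ itself. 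That makes (iv) vacuously true but destroys (i): the corresponding $F_i$ is a step function whose complexity grows with $p$, with no trig-norm bound in terms of $r,\epsilon$ alone. What is needed (and what the paper does) is to discretize $\mb{G}^d$ into the cells $I_{R;t,u}$ of side $1/R$, with $R\asymp\epsilon^{-1}$ fixed in advance independently of $p$ and $d$, and to project onto the preimages $\Psi^{-1}(I_{R;t,u})$; the hypothesis $R>8\pi\delta^{-1}$ in Lemma \ref{u3bad_projbad_lemma} is exactly what converts correlation with the character $e_p(ax^2+bx)$ into correlation with a function measurable in the refined partition, which is what your Pythagoras step requires. On the bookkeeping: each increment adjoins a $2$-dimensional system and gains energy $(\delta/2)^2$, giving $d\le 2\cdot 4r\epsilon^{-2}+1=8r\epsilon^{-2}+1$; the factor $8$ has nothing to do with trig-norm losses.

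The second gap is the tension between (i), (iii) and (v), which your sketch resolves incorrectly. If $g_i=F_i\circ\Psi$ is taken to be exactly the conditional expectation, then (iii) and (v) are immediate but (i) fails as above; once you smooth the cell indicators to obtain a genuine trigonometric polynomial, $\sum_i F_i\circ\Psi$ is no longer identically $1$, and there is no reason it remains $\ge 1$ pointwise, so your assertion that (v) holds ``with equality'' cannot stand. The paper's resolution is asymmetric: $g_i$ is kept equal to the projection $\Pi_R^\Psi 1_{c^{-1}(i)}$ (so (iv) comes from the Koopman--von Neumann step and $g_i$ takes values in $[0,1]$), while $F_i$ is constructed as a pointwise \emph{majorant}, Lemma \ref{trig_lem}(i) giving $F_i\circ\Psi\ge\Pi_R^\Psi f_i$, whence $\sum_i F_i\circ\Psi\ge\Pi_R^\Psi\sum_i f_i=1$; the price is the $L^2$ error in (iii), whose estimate controls the mass of points of $\fp$ mapped near cell boundaries via \eqref{interval_bd} and is the source of the hypothesis $p\ge p_3(r,\epsilon)$. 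Your ``main obstacle'' paragraph correctly senses that smoothing and largeness of $p$ enter at this point, but the one-sided nature of the approximation---the only reason (v) survives the smoothing---is absent from your plan and needs to be built in explicitly.
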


\begin{lem}[Counting lemma]\label{counting_lemma}
Let $\Psi$ be a $d$-dimensional quadratic system and $F:\mb{G}^d\to \mb{C}$ be a trigonometric polynomial. Then
\begin{multline}\label{counting_eq}
T(F\circ \Psi, F\circ \Psi, F\circ \Psi ) = \int F(t,u)F(t',u')F(u+u',u^{\prime\prime})\:d\mu_{G_\Psi}^{\otimes 5}(u,u',u^{\prime\prime},t,t')\\+ O\left(p^{-1/2}M^3 \right),
\end{multline}
where $M=\|F\|_\mathrm{trig}$.
\end{lem}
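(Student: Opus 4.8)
The plan is to expand everything in terms of Fourier coefficients and then apply Lemma~\ref{lem:baby_count} (the baby counting lemma) to a suitable trigonometric polynomial on a product of copies of $\mb{G}^d$. First I would write out $T(F\circ\Psi,F\circ\Psi,F\circ\Psi)$ directly from the definition \eqref{eq:T}:
\begin{equation*}
T(F\circ\Psi,F\circ\Psi,F\circ\Psi) = \mb{E}_{x,y,z:\,x+y=z^2} (F\circ\Psi)(x)(F\circ\Psi)(y)(F\circ\Psi)(z).
\end{equation*}
The key observation is that for $x+y=z^2$ the quadratic phases satisfy $a_i x^2/p + a_i y^2/p$ has no simple relation, but the \emph{linear} parts do: $a_i x/p + a_i y/p = a_i z^2/p$, i.e.\ the linear coordinate of $\Psi(z)$ equals the sum of the linear coordinates of $\Psi(x)$ and $\Psi(y)$ while the quadratic coordinate of $\Psi(z)$ is essentially unrelated. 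This is exactly the combinatorial structure $(t,u),(t',u'),(u+u',u'')$ that appears on the right-hand side: writing $\Psi(x)=(t,u)$, $\Psi(y)=(t',u')$, $\Psi(z)=(u+u',u'')$, the constraint is that the linear part of the third point is forced to be $u+u'$, with the quadratic part $u''$ free.

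The implementation: substitute $y = z^2 - x$, so $T = \mb{E}_{x,z}(F\circ\Psi)(x)(F\circ\Psi)(z^2-x)(F\circ\Psi)(z)$. Now I expand each $F$ via its Fourier expansion \eqref{trig}. The phase contributions, collecting the $x^2$, $x$, and constant-in-$x$ terms, produce sums like $e_p(a\cdot(\text{stuff})\,x^2/p + b\cdot(\text{stuff})\,x/p)$, and after also expanding in $z$ one gets a multilinear expression whose $x$- and $z$-dependence is again of the form $e_p(Ax^2+Bx)$, $e_p(Cz^2+Dz)$ for integer combinations $A,B,C,D$ of the Fourier frequencies dotted with the $a_i$. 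The point is to recognise the whole expression as $\mb{E}_w \tilde F\circ\tilde\Psi(w)$ where $w$ ranges over (something like) $\fp\times\fp$ via the quadratic system $\tilde\Psi$ built from the two variables $x,z$ and the coefficient tuple consisting of several copies of the $a_i$, and $\tilde F$ is a trigonometric polynomial with $\|\tilde F\|_{\mathrm{trig}} \ll M^3$ (since it is essentially a product of three copies of $F$, and both the frequency support and the $\ell^1$ mass of the coefficients multiply). Applying Lemma~\ref{lem:baby_count} to $\tilde\Psi$ and $\tilde F$ gives the main term $\int \tilde F\, d\mu_{H_{\tilde\Psi}}$ plus an error $O(\|\tilde F\|_{\mathrm{trig}} p^{-1/2}) = O(M^3 p^{-1/2})$. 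It then remains to identify $\int \tilde F\, d\mu_{H_{\tilde\Psi}}$ with the claimed integral over $G_\Psi^{\otimes 5}$, using Lemma~\ref{orthog_lemma} to see that integrating out the free directions of $H_{\tilde\Psi}$ imposes precisely the lattice relations $\Lambda_\Psi$ on each block of frequencies, which is the Fourier-side statement that each of the five variables $u,u',u'',t,t'$ ranges over $G_\Psi$ independently (subject only to the third point having linear part $u+u'$).

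The main obstacle, I expect, is the bookkeeping in identifying the limiting integral: one must carefully track which linear combinations of frequencies are forced to lie in $\Lambda_\Psi$ by the vanishing of the corresponding Haar integral, and check that these are exactly the constraints that collapse the $x,z$ average into the five-fold $G_\Psi$ integral with the $(t,u),(t',u'),(u+u',u'')$ pattern — in particular that the ``$u+u'$'' appears because the linear part of $\Psi(z)$ is literally the sum of those of $\Psi(x)$ and $\Psi(y)$ after the substitution $y=z^2-x$, whereas the quadratic parts contribute the independent variables $t,t',u''$. A secondary point to be careful about is confirming that $\|\tilde F\|_{\mathrm{trig}}$ really is $O(M^3)$ with an absolute implied constant: the frequency support of a product of trigonometric polynomials adds, so it stays bounded by $3M$ in each coordinate block, and the coefficient $\ell^1$-norm of a product is at most the product of the $\ell^1$-norms, giving $M^3$; these are routine but need to be stated cleanly. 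Everything else is a direct Fourier computation of the type already carried out in Proposition~\ref{normbound_prop}.
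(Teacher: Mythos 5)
The reduction at the heart of your argument does not go through as stated. After substituting $y=z^2-x$, the middle factor is $F\circ\Psi(z^2-x)$, whose phases involve $a_i(z^2-x)^2/p = a_i(x^2-2xz^2+z^4)/p$. So once you expand in Fourier series, the exponent in the variables $(x,z)$ contains a quartic term $\xi\cdot a\,z^4$ and a cross term $-2\,\xi\cdot a\,xz^2$; it is \emph{not} of the form $e_p(Ax^2+Bx)\,e_p(Cz^2+Dz)$ with $A,B,C,D$ integer combinations of frequencies dotted with the $a_i$ — the coefficient of $x$ depends on $z^2$. Consequently the averaged expression is not $\tilde F\circ\tilde\Psi$ for any quadratic system $\tilde\Psi$, and Lemma \ref{lem:baby_count} (which is in any case stated only for one-variable systems $\Psi:\fp\to\mb{G}^d$) cannot be applied to it. A substitution-based argument can be salvaged by performing the Gauss sum in $x$ for each fixed $z$ and then treating the degenerate cases in $z$ separately, but that is a different and fiddlier computation than the one you describe, and it is precisely the step your proposal leaves unexamined.

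The paper avoids the substitution altogether: it expands $F$ and reduces, by multilinearity, to $T(f_1,f_2,f_3)$ for one-variable quadratic phases $f_1,f_2,f_3$, and then uses the identity $T(f_1,f_2,f_3)=\sum_{\xi}\hat f_1(-\xi)\hat f_2(-\xi)\hat g(\xi)$ with $g(w)=\sum_{z^2=w}f_3(z)$, exactly as in the proof of Proposition \ref{normbound_prop}. Each factor is then a single-variable Gauss sum, hence $O(p^{-1/2})$ unless its quadratic and linear coefficients vanish; this isolates exactly the tuples with $\xi_1,\xi_3,\xi_6\in\Lambda_\Psi$ and $\xi_2+\xi_5,\xi_4+\xi_5\in\Lambda_\Psi$, which are matched with the five-fold $G_\Psi$-integral via Lemma \ref{orthog_lemma} — this last step, and your bound $\ll M^3$ on the accumulated Fourier mass, agree with what you propose and are fine. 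One further small slip: it is the \emph{quadratic} coordinate $a_iz^2/p$ of $\Psi(z)$ that is forced to equal $u+u'$, the sum of the \emph{linear} coordinates of $\Psi(x)$ and $\Psi(y)$, while the linear coordinate $a_iz/p$ is the free variable $u''$; your identification of the pattern $(t,u),(t',u'),(u+u',u'')$ is nevertheless correct. The genuine gap is the claimed reduction to a two-variable quadratic system and the appeal to Lemma \ref{lem:baby_count} outside its scope.
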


\begin{lem}[Ramsey lemma]\label{ramsey_lemma}
There is a positive function $\rho:\N \to (0,1]$ with the following property. Suppose that $G$ is a compact Abelian group with Haar probability measure $\mu$. Assume also that $F_1,\dots,F_r: G\times G\to \mb{R}_{\ge 0}$ are continuous functions that satisfy $\sum_{i=1}^r F_i(g_1,g_2)\ge 1$ for all $(g_1,g_2)\in G\times G$. Then there is an $i\in [r]$ such that
\begin{equation}
\int F_i(t,u)F_i(t',u')F_i(u+u',u^{\prime\prime})\:d\mu^{\otimes 5}(u,u',u^{\prime\prime},t,t') \ge \rho(r).
\label{rho}
\end{equation}
\end{lem}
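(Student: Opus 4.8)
The plan is to reduce this to a purely finite, combinatorial Ramsey-type statement by a compactness/approximation argument, and then deduce the finite statement from Schur's theorem. First I would observe that the quantity on the left of \eqref{rho} is, for a fixed colour class $F_i$, a genuine average over a product group, and the expression $F_i(t,u)F_i(t',u')F_i(u+u',u'')$ is designed so that the triple $(u,u',u+u')$ runs over Schur triples in the $G$-component. So the natural heuristic is: pigeonhole to find a colour that occupies a fixed positive proportion $\delta = 1/r$ of $G\times G$ on a suitable "slice", then find a Schur triple $u_1+u_2 = u_3$ inside the relevant projection, with enough room in the $t$-coordinates to make the integral genuinely positive.

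The cleanest route is a compactness argument. Suppose the lemma fails; then for every $n$ there is a compact abelian group $G_n$ with measure $\mu_n$ and continuous $F_1^{(n)},\dots,F_r^{(n)}\ge 0$ summing to at least $1$ pointwise, yet every colour gives integral $< 1/n$ in \eqref{rho}. One would like to pass to a limit, but the groups $G_n$ vary; the standard fix (as in \cite{green-sanders}) is to replace each $F_i^{(n)}$ by a finite model. Concretely, each $F_i^{(n)}$ can be uniformly approximated by a trigonometric polynomial, hence factors through a finite quotient $G_n \to (\Z/N\Z)^k$ for some $N,k$ (depending on $n$), so up to an error that tends to $0$ we may assume $G_n$ is finite abelian and even cyclic of prime order after a further reduction. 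At that point the left side of \eqref{rho} becomes a normalised count of monochromatic configurations $(u,u',u+u',t,t')$ with all five of $u,u',u+u'$ (in the first coordinate paired with various second coordinates) the same colour; the key point is to extract from the hypothesis $\sum_i F_i \ge 1$ an honest $r$-colouring of a finite cyclic group and then apply Schur's theorem on partition regularity of $x+y=z$ to locate a monochromatic Schur triple, after which one quantifies the number of such triples and the freedom in the extra coordinates to get a lower bound $\rho(r)>0$ independent of $n$ — contradicting the assumption that the integrals tend to $0$.

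Carrying this out in order: (1) fix the failing sequence and normalise; (2) approximate each $F_i^{(n)}$ by a trig polynomial to reduce to $G_n$ finite abelian; (3) define, for each element $g\in G_n\times G_n$, a colour $i(g)$ with $F_{i(g)}^{(n)}(g)$ of size $\gg 1/r$ (possible since the $F_i$ are nonnegative and sum to $\ge 1$, and bounded by their trig-norms), thereby producing an $r$-colouring of the finite group; (4) project to the first coordinate and apply Schur's lemma to get $\gg_r |G_n|^2$ monochromatic Schur triples $u+u'=u''$ in that coordinate; (5) for each such triple, note the second coordinates $t,t',t''$ range over a positive-density set forced by the colouring, and count to get $\gg_r |G_n|^5$ monochromatic $5$-tuples, i.e. the integral in \eqref{rho} is $\ge \rho(r)$ for a fixed $\rho(r)>0$. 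The main obstacle is step (4)–(5): Schur's theorem hands back one monochromatic triple, but \eqref{rho} needs a \emph{positive density} of them together with matching second coordinates, so one must run a supersaturation version of Schur (e.g. by averaging Schur's lemma over cosets / dilates, or by a direct removal-lemma-free counting argument in the cyclic group) and be careful that the colour used for the Schur triple in the first coordinate is compatible with the values forced in the second coordinate. Handling the uniformity of $\rho(r)$ across all the varying finite groups $G_n$ — rather than the more delicate issue of varying the group itself — is exactly what the trig-polynomial approximation in step (2) is there to control, so I would make sure the approximation error is taken small enough (say $\le \rho(r)/2$) before doing any counting.
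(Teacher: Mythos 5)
Your reduction breaks down at the step where you pass from the colouring of $G\times G$ to Schur's theorem. The pattern in \eqref{rho} is not a Schur configuration for a single colouring of $G$: the colour classes live on $G\times G$, and the linear constraint ties the \emph{second} coordinates $u,u'$ of the first two points to the \emph{first} coordinate $u+u'$ of the third point, with $t,t',u''$ free. "Projecting to the first coordinate" is therefore not well defined, and the natural repair (assign each $y\in G$ a colour that is popular on its fibre) produces two different auxiliary colourings of $G$ — one recording which colour is popular on $\{(t,y):t\in G\}$, i.e.\ popularity of $y$ as a second coordinate, and one recording which colour is popular on $\{(w,u''):u''\in G\}$, i.e.\ popularity of $w$ as a first coordinate. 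What you need is $u,u'$ and the \emph{same} colour $i$ such that $u,u'$ are popular-as-second-coordinates for $i$ while $u+u'$ is popular-as-first-coordinate for $i$; Schur's theorem (even in supersaturated form) for one colouring says nothing about such cross-compatibility, and a "for every element pick some popular colour" pigeonhole loses exactly the information needed to enforce it. You flag this compatibility issue yourself at the end of step (5), but you give no mechanism for resolving it — and it is the entire content of the lemma. The paper resolves it by a Cwalina-style argument \cite{cwalina}, following \cite{green-sanders}: a Fubini/double-counting lemma (Lemma \ref{colouring lemma}) produces a positive-measure set $T'$ of coordinates, almost all of whose fibres see one fixed colour $i$ with definite density, and then Proposition \ref{ramsey_prop} runs an induction on the number of colours, allowing \emph{partial} colourings with a small exceptional set $E$ so that the colour class $A_i$ can be deleted and the remaining $r-1$ colours analysed on $T'$. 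None of this structure is present in your sketch, so the proposal has a genuine gap rather than a routine omission.

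Two secondary points. First, the compactness/finite-model reduction is both unnecessary and incorrect as stated: a trigonometric polynomial on a compact abelian $G$ factors through the image of finitely many characters, which is a subgroup of a finite-dimensional torus, not a finite group (already on $G=\mb{R}/\mb{Z}$ the character $e(\theta)$ factors through no finite quotient), so you cannot reduce to finite, let alone prime cyclic, groups this way without a further discretisation that reintroduces the quantitative difficulties you hoped to avoid. The paper's argument needs no such reduction: it is purely measure-theoretic and works directly with $\mu$ on an arbitrary compact abelian $G$. Second, the passage from $\sum_i F_i\ge 1$ to a colouring should be done as in the paper, by truncating $f_i=\min(F_i,1)$ and setting $A_i=\{f_i\ge 1/r\}\setminus\bigcup_{j<i}A_j$, after which the bound $F_i\ge \frac1r 1_{A_i}$ on each class converts \eqref{rho} into a statement about indicators; the quantitative Schur theorem of \cite{frankl} is used in the paper only in the genuinely one-dimensional case $\alpha=\beta=\gamma$, where the configuration really is a Schur triple for a single colouring.
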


The Regularity lemma allows us to replace the characteristic functions of the colour classes by structured functions of the form $F\circ \Psi$, up to a small error. For these special functions the counting lemma tells us that counting configurations of the form $x+y=z^2$ in $\fp$ is the same as counting some linear configuration in $H_\Psi$. Finally, the Ramsey lemma establishes that there are in fact sufficiently many monochromatic linear configurations in $H_\Psi$.

Lemmas \ref{counting_lemma} and \ref{ramsey_lemma} will be used together, so we therefore state the combination as a separate proposition.
\begin{prop}[{Cf. \cite[Proposition 4.5]{green-sanders}}]\label{combination_prop}
There is a function $p_2:\mb{R}_{\ge 0}\times \mb{Z}_{\ge 0}\times \mb{Z}_{\ge 0}\to \mb{R}_{\ge 0}$ such that the following holds. Let $\Psi$ be a quadratic system of dimension $d$. Suppose that $F_1,\dots,F_r: \mb{G}^d\to \R_{\ge 0}$ satisfy $\|F_i\|_\mathrm{trig}\le M$ and $\sum_i F_i\circ \Psi\ge 1$ pointwise on $\fp$. Then there is some $i$ such that
\begin{equation}
T(F_i\circ\Psi, F_i\circ \Psi,F_i\circ\Psi) \ge 2^{-4} \rho(r),
\end{equation}
with $\rho$ as given by Lemma \ref{ramsey_lemma}, provided $p\ge p_2(M,r,d)$.
\begin{proof}
Apply Lemma \ref{counting_lemma} to each $F_i$ composed with $\Psi$ to get that
\begin{multline*}
T(F_i\circ\Psi,F_i\circ\Psi,F_i\circ\Psi) \\
\ge \int F_i(t,u)F_i(t',u')F_i(u+u',u^{\prime\prime})\:d\mu_{G_\Psi}^{\otimes 5} (t,t',u,u',u^{\prime\prime}) - \frac{1}{16}\rho(r),
\end{multline*}
provided $p$ is large enough, say $p\ge p_2'(M,r)$.

Let $h\in H_\Psi$. Note that $|F_i(w)-F_i(v)|\le 4\pi d M^2|w-v|$ for all $v,w\in\mb{G}^d$, and so
\begin{equation*}
\Big|\sum_{i=1}^r F_i(h)-\sum_{i=1}^r F_i(\Psi(z))\Big| \le 4\pi dr M^2|\Psi(z)-h|
\end{equation*}
for any $z\in \fp$. By Lemma \ref{H_approx_psi} we can find a $z$ such that $|\Psi(z)-h|\le \frac{1}{8\pi d r M^2}$ provided $p\ge p_1(d,\frac{1}{8\pi d r M^2})$. For this value of $z$ we then have
\begin{equation}
\sum_{i=1}^r F_i(h) \ge \sum_{i=1}^r F_i\circ\Psi(z) -\frac{1}{2} \ge \frac{1}{2}.
\end{equation}
Applying Lemma \ref{ramsey_lemma} to the functions $(2F_i)_{i=1}^r$ then gives that there is some $i$ such that
\begin{equation}
T(F_i\circ \Psi,F_i\circ\Psi,F_i\circ\Psi) \ge 2^{-3}\rho(r)-2^{-4}\rho(r) = 2^{-4}\rho(r),
\end{equation}
provided $p\ge p_2(M,r,d)$, where we define $p_2(M,r,d)$ to be the largest of $p_2'(M,r)$ and $p_1(d,\frac{1}{8\pi d r M^2})$.
\end{proof}
\end{prop}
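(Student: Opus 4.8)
The plan is to chain Lemma~\ref{counting_lemma} and Lemma~\ref{ramsey_lemma} together, with Lemma~\ref{H_approx_psi} serving as the bridge between the two.

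\emph{Step 1: replace $T$ by an integral.} I would first apply the counting lemma to each $F_i\circ\Psi$. Since $\|F_i\|_{\mathrm{trig}}\le M$, there is an absolute constant $C$ with
\begin{equation*}
T(F_i\circ\Psi,F_i\circ\Psi,F_i\circ\Psi)\ge \int F_i(t,u)F_i(t',u')F_i(u+u',u'')\,d\mu_{G_\Psi}^{\otimes 5}(u,u',u'',t,t') - Cp^{-1/2}M^3 .
\end{equation*}
As $M$ and $r$ are fixed in the statement, one may take $p$ above a threshold $p_2'(M,r)$ so that $Cp^{-1/2}M^3\le \tfrac{1}{16}\rho(r)$ (recall $\rho(r)>0$ by Lemma~\ref{ramsey_lemma}). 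Hence it suffices to exhibit an $i$ for which the integral above is at least $2^{-3}\rho(r)$.

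\emph{Step 2: transfer the pointwise bound from $\Psi(\fp)$ to all of $H_\Psi$.} I would like to invoke Lemma~\ref{ramsey_lemma} with $G=G_\Psi$, but that lemma demands $\sum_i F_i\ge 1$ \emph{everywhere} on $G_\Psi\times G_\Psi=H_\Psi$, whereas the hypothesis only gives this on the image $\Psi(\fp)$. To bridge the gap I would use that a trigonometric polynomial $F$ with $\|F\|_{\mathrm{trig}}\le M$ is Lipschitz: differentiating the Fourier expansion termwise and using that the frequencies and the coefficients are controlled by $M$ yields $|F(w)-F(v)|\le 4\pi dM^2|w-v|$ for all $v,w\in\mb{G}^d$. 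Given $h\in H_\Psi$, apply Lemma~\ref{H_approx_psi} with $\epsilon=\tfrac{1}{8\pi drM^2}$ to find $z\in\fp$ with $|\Psi(z)-h|\le\epsilon$, which is possible once $p\ge p_1\!\left(d,\tfrac{1}{8\pi drM^2}\right)$. Summing the Lipschitz estimates over $i$,
\begin{equation*}
\Big|\sum_{i=1}^r F_i(h)-\sum_{i=1}^r F_i(\Psi(z))\Big|\le r\cdot 4\pi dM^2\cdot\frac{1}{8\pi drM^2}=\frac12 ,
\end{equation*}
so $\sum_i F_i(h)\ge \sum_i F_i\circ\Psi(z)-\tfrac12\ge\tfrac12$. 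Since $h\in H_\Psi$ was arbitrary, the functions $2F_1,\dots,2F_r$ satisfy $\sum_i 2F_i\ge 1$ everywhere on $G_\Psi\times G_\Psi$.

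\emph{Step 3: apply Ramsey and combine.} The $2F_i$ are continuous and nonnegative on the compact abelian group $G_\Psi$ and meet the hypothesis of Lemma~\ref{ramsey_lemma}, so there is an $i$ with
\begin{equation*}
\int (2F_i)(t,u)(2F_i)(t',u')(2F_i)(u+u',u'')\,d\mu_{G_\Psi}^{\otimes 5}\ge\rho(r),
\end{equation*}
i.e.\ the corresponding integral for $F_i$ is at least $2^{-3}\rho(r)$. Feeding this back into Step~1 gives $T(F_i\circ\Psi,F_i\circ\Psi,F_i\circ\Psi)\ge 2^{-3}\rho(r)-\tfrac{1}{16}\rho(r)=2^{-4}\rho(r)$, and one sets $p_2(M,r,d)=\max\{p_2'(M,r),\,p_1(d,\tfrac{1}{8\pi drM^2})\}$. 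The one genuinely delicate point is Step~2: the Ramsey lemma is a statement about functions bounded below on the \emph{whole} group $H_\Psi$, and the only route to such a global lower bound from a hypothesis that constrains $\sum_i F_i$ merely on the sparse set $\Psi(\fp)$ is to combine the equidistribution input of Lemma~\ref{H_approx_psi} with a quantitative smoothness bound on trigonometric polynomials, choosing the approximation radius $\epsilon$ small enough relative to $M,d,r$ that the transferred loss stays under $\tfrac12$; the rest is bookkeeping of absolute constants and of the thresholds on $p$.
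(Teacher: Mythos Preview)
Your proposal is correct and follows the paper's proof essentially verbatim: the same three-step structure (counting lemma, Lipschitz transfer via Lemma~\ref{H_approx_psi}, then Ramsey applied to $2F_i$), the same Lipschitz constant $4\pi dM^2$, the same choice $\epsilon=\tfrac{1}{8\pi drM^2}$, the same error budget $\tfrac{1}{16}\rho(r)$, and the same definition of $p_2(M,r,d)$.
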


We are now ready to prove the main theorem. Note that in contrast to \cite[proof of Proposition 4.1 p. 22]{green-sanders} we will not need to induct on the number of colours.

\begin{proof}[Proof of Theorem \ref{thm:main}]
Given the number of colours $r$, fix $\epsilon = 2^{-7}3^{-1}\rho(r)$ and
\begin{equation}
p_0(r) = \max\Big\{ p_3(r,\epsilon),\sup_{d\le 8r\epsilon^{-2}+1} p_2\big(M(r,\epsilon),r,d\big) \Big\},
\end{equation}
where $\rho$ and $p_2$ are the functions appearing in Proposition \ref{combination_prop} and $p_3$ and $M$ are the functions appearing in Lemma \ref{lem:regularity_lemma}. Note that the above supremum over $d$ runs over positive integers, and so it is finite. Assume first that $p\ge p_0$. Apply Lemma \ref{lem:regularity_lemma} with the above $\eps$ and our colouring $c$ to find $F_i,g_i,\Psi$ and $d$ that satisfy the properties listed in the Lemma, provided $p\ge p_3(r,\epsilon)$, which is satisfied by the choice of $p_0$. By Proposition \ref{combination_prop} there is some $i\in [r]$ such that
\begin{equation*}
T(F_i\circ \Psi,F_i\circ\Psi, F_i\circ \Psi) \ge 2^{-4}\rho(r)
\end{equation*}
provided $p\ge p_2(M(r,\epsilon),r,d)$. This holds by the choice of $p_0$ and property $(ii)$ from Lemma \ref{lem:regularity_lemma}. Note also that $\|g_i\|_2\le 1$ as $g_i: \fp\to [-1,1]$, $\|F_i\circ\Psi-g_i\|_2\le \eps \le 1$ and so $\|F_i\circ\Psi\|_2\le 2$. Lemma \ref{l2_bound_lemma} then gives
\begin{multline*}
|T(g_i,g_i,g_i)-T(F_i\circ\Psi,F_i\circ\Psi,F_i\circ\Psi)| \le \\
|T(g_i-F_i\circ\Psi,g_i,g_i)| + |T(F_i\circ\Psi,g_i-F_i\circ\Psi, g_i)| + |T(F_i\circ\Psi, F_i\circ\Psi, g_i-F_i\circ\Psi)|\\
\le 7 \epsilon,
\end{multline*}
as $\|F_i\circ\Psi-g_i\|_2 \le \epsilon$, so that 
\begin{equation}
|T(g_i,g_i,g_i)|\ge 2^{-4} \rho(r) - 7\eps.
\label{gi_bound}
\end{equation}

Let
\[A_i = c^{-1}(i),\quad i=1,\dots, r.\]
By Proposition \ref{normbound_prop} we get that
\begin{multline*}
|T(g_i,g_i,g_i)-T(1_{A_i},1_{A_i},1_{A_i})| \le\\
|T(g_i-1_{A_i},g_i,g_i)|+|T(1_{A_i},g_i-1_{A_i},g_i)| + |T(1_{A_i},1_{A_i},g_i-1_{A_i})| \\
\le 3\sqrt{2} \|g_i-1_{A_i}\|_{u_3} \le 3\sqrt{2}\epsilon,
\end{multline*}
where the final inequality follows by property $(iv)$ of Lemma \ref{lem:regularity_lemma}. Now this combined with \eqref{gi_bound} gives
\begin{equation*}
T(1_{A_i},1_{A_i},1_{A_i}) \ge 2^{-4} \rho(r)-7\eps-3\sqrt{2}\epsilon > 2^{-5} \rho(r)
\end{equation*}
by our choice of $\eps$.

If instead $p\le p_0$ we have the two trivial solutions $x=y=z=0$ and $x=y=z=2$, giving at least $\frac{2}{p_0(r)^2}p^2$ monochromatic solutions to \eqref{problem}. Finally we set
\[ c_r = \min\left\{ \frac{2}{p_0(r)^2}, 2^{-5} \rho(r)\right\}, \]
which finishes the proof.

\end{proof}

\section{Regularity lemma}\label{sec:regularity}
The conclusion of Lemma \ref{lem:regularity_lemma} is implied by the regularity lemma given in \cite{green-sanders}. The proof given here is therefore also just the necessary parts of the proof given by Green--Sanders in their paper. Nevertheless we believe it is instructive to see the full proof written out in this simpler setting, as many of the unpleasant technicalities of \cite{green-sanders} go away in this case.

We begin by defining intervals on $\mb{G}^d$. Let $I(x,R) = \big[(2x-1)/2R,(2x+1)/2R\big)$ be an interval in $\mb{R}/\mb{Z}$ and define
\begin{equation*}
I_{R;t,u} = \big\{ (\theta,\phi)\subset \mb{G}^d:\theta_j \in I(t_j,R), \phi_j\in I(u_j,R) \text{ for }j=1,\dots,d\big\}.
\end{equation*}
Next, given a quadratic system $\Psi$, consider the $\sigma$-algebra generated by $\Psi^{-1}(I_{R;t,u})$ for $t,u\in \{0,\dots,R-1\}^d$, and let $\Pi_R^{\Psi}$ be the projection operator onto this $\sigma$-algebra. Explicitly, for any $f:\fp\to\C$, we have that
\begin{equation*}
\Pi_R^\Psi f (x) = \frac{1}{|A(x)|} \sum_{x'\in A(x)} f(x'),
\end{equation*}
where $A(x) = \Psi^{-1}(I_{R;t,u})$ with $t,u$ such that $\Psi(x)\in I_{R;t,u}$.

\begin{lem}\label{u3bad_projbad_lemma}
Suppose that $\|f\|_{\infty} \le 1$, $\|f\|_{u_3}\ge \delta$ and $R>8\pi \delta^{-1}$. Then there is a function $g\in L^\infty(\fp)$ with $\|g\|_\infty \le 1$ and a quadratic system $\Phi$ of dimension $2$, such that $|\langle f,\Pi_R^\Phi g\rangle| \ge \frac{1}{2}\delta$.
\begin{proof}
By the definition of the $u_3$-norm there are $a_1,a_2$ such that
\begin{equation}
\big|\mb{E}_x f(x) \overline{e_p(a_1x^2+a_2x)}\big| \ge \delta.
\label{eq:u3_large}
\end{equation}
Let $\Phi(x) = \left(\frac{a_ix^2}{p},\frac{a_ix}{p}\right)_{i=1,2}$ and $F(\theta_1,\phi_1,\theta_2,\phi_2) = e(\theta_1+\phi_2)$, and set $g=F\circ \Phi$. Now \eqref{eq:u3_large} states precisely that $|\langle f, g\rangle |\ge \delta$, and $\|g\|_\infty \le 1$ follows from $\|F\|_{\infty} \le 1$. Furthermore, 
\begin{equation*}
\|\Pi_R^\Phi g-g\|_\infty \le \sup_x \sup_{x' \in A(x)} |F(\Phi(x'))-F(\Phi(x))| \le 4\pi R^{-1},
\end{equation*}
as $|F(x)-F(y)| \le 4\pi |x-y|$. The assumption on $R$ gives that this is less than $\frac{1}{2}\delta $. This together with $\|f\|_\infty\le 1$ gives
\begin{equation*}
|\langle f, \Pi_R^\Phi g\rangle|\ge |\langle f, g\rangle| - |\langle f,g-\Pi_R^\Phi g\rangle |\ge \frac{1}{2}\delta,
\end{equation*}
as required.
\end{proof}
\end{lem}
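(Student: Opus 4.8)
The plan is to convert largeness of the $u_3$-norm into correlation of $f$ with a single quadratic exponential, to realise that exponential as $F\circ\Phi$ for a two-dimensional quadratic system $\Phi$ and an explicit trigonometric monomial $F$, and then to check that the averaging operator $\Pi_R^\Phi$ disturbs $g:=F\circ\Phi$ by at most $O(1/R)$, which is harmless once $R>8\pi\delta^{-1}$.

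First I would unpack the hypothesis $\|f\|_{u_3}\ge\delta$: by definition there exist $a_1,a_2\in\fp$ with $\big|\mb{E}_x f(x)\overline{e_p(a_1x^2+a_2x)}\big|\ge\delta$. Taking $\Phi(x)=(a_ix^2/p,a_ix/p)_{i=1,2}$, which is a quadratic system of dimension $2$, and $F(\theta_1,\phi_1,\theta_2,\phi_2)=e(\theta_1+\phi_2)$, one gets $g:=F\circ\Phi$ with $g(x)=e_p(a_1x^2+a_2x)$, so that $|\langle f,g\rangle|\ge\delta$ for the usual inner product $\langle f,g\rangle=\mb{E}_x f(x)\overline{g(x)}$, while $\|g\|_\infty\le 1$ since $|F|\le 1$ pointwise.

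Next I would estimate $\|\Pi_R^\Phi g-g\|_\infty$. Each fibre $A(x)=\Phi^{-1}(I_{R;t,u})$ lies in the preimage of a box whose sides are intervals of length $1/R$ in $\mb{R}/\mb{Z}$, so $|\Phi(x')-\Phi(x)|<1/R$ for every $x'\in A(x)$; combining this with the Lipschitz bound $|F(w)-F(v)|\le 4\pi|w-v|$ (valid because $F$ is a unit-frequency exponential in two of the coordinates) gives $|g(x')-g(x)|<4\pi/R$, and averaging over $x'\in A(x)$ yields $\|\Pi_R^\Phi g-g\|_\infty<4\pi/R$, which is below $\tfrac12\delta$ by the assumption on $R$. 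Finally, writing $\langle f,\Pi_R^\Phi g\rangle=\langle f,g\rangle-\langle f,g-\Pi_R^\Phi g\rangle$ and bounding the second term by $\|f\|_\infty\|g-\Pi_R^\Phi g\|_\infty<\tfrac12\delta$ gives $|\langle f,\Pi_R^\Phi g\rangle|>\delta-\tfrac12\delta=\tfrac12\delta$.

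I do not expect a genuine obstacle: the statement is essentially the chain ``large $u_3$-norm $\Rightarrow$ correlation with a quadratic phase $\Rightarrow$ correlation with a function nearly invariant under $\Pi_R^\Phi$,'' and the only point needing care is that the Lipschitz constant of $F$ and the mesh size $1/R$ must interact with the threshold $R>8\pi\delta^{-1}$ so as to leave slack for the triangle inequality — and the numerology in the hypothesis is chosen precisely for that.
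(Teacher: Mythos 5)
Your proposal is correct and follows the paper's own proof essentially verbatim: the same choice of $\Phi$ and $F$, the same Lipschitz bound $|F(w)-F(v)|\le 4\pi|w-v|$ giving $\|\Pi_R^\Phi g-g\|_\infty\le 4\pi R^{-1}<\tfrac12\delta$, and the same triangle-inequality conclusion. No gaps.
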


By repeated application of the above lemma and an energy increment argument we are able to establish the following Koopman von Neumann type Lemma.
\begin{lem}\label{koopman_von_neumann}
Suppose that $f_1,\dots,f_r:\fp \to \mb{C}$ are such that $\|f_i\|_\infty \le 1$ for all $i\in \{1,\dots,r\}$ and that $R>16 \pi \delta^{-1}$. Then there is a quadratic system $\Psi$ with $\dim \Psi \le 8r\delta^{-2}+1$ such that $\|f_i-\Pi_R^{\Psi}f_i\|_{u_3}\le \delta$ for all $i\in\{1,\dots,r\}$.

\end{lem}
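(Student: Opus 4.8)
The plan is to run a standard energy increment argument, using Lemma \ref{u3bad_projbad_lemma} as the one-step refinement. We maintain a single quadratic system $\Psi$ (growing in dimension) together with the projection operator $\Pi_R^\Psi$, and track the quantity $\sum_{i=1}^r \|\Pi_R^\Psi f_i\|_2^2$, which lives in $[0,r]$. The key structural facts we need about $\Pi_R^\Psi$ are: (a) it is an orthogonal projection onto the $\sigma$-algebra $\mathcal{B}_\Psi$ generated by the sets $\Psi^{-1}(I_{R;t,u})$, so $\|\Pi_R^\Psi f\|_2 \le \|f\|_2 \le \|f\|_\infty$ and $\langle f, \Pi_R^\Psi g\rangle = \langle \Pi_R^\Psi f, \Pi_R^\Psi g\rangle$; (b) if $\Psi'$ is obtained from $\Psi$ by appending the coefficients of another quadratic system $\Phi$, then $\mathcal{B}_{\Psi}\subseteq \mathcal{B}_{\Psi'}$, so the projections are nested and the energy $\sum_i \|\Pi_R^{\Psi'} f_i\|_2^2$ only increases; and (c) appending $\Phi$ of dimension $2$ increases $\dim\Psi$ by at most $2$.

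The iteration runs as follows. Start with $\Psi$ the empty ($0$-dimensional) system, so $\Pi_R^\Psi f_i = \mathbb{E}_x f_i(x)$. At each stage, if some $i$ has $\|f_i - \Pi_R^\Psi f_i\|_{u_3} > \delta$, set $f = f_i - \Pi_R^\Psi f_i$; note $\|f\|_\infty \le 2$, so after rescaling by $1/2$ we may apply Lemma \ref{u3bad_projbad_lemma} with $\delta$ replaced by $\delta/2$ (which is why we need $R > 16\pi\delta^{-1} = 8\pi(\delta/2)^{-1}$). This produces a dimension-$2$ quadratic system $\Phi$ and a function $g$, $\|g\|_\infty\le 1$, with $|\langle f_i - \Pi_R^\Psi f_i, \Pi_R^\Phi g\rangle| \ge \frac{1}{2}\cdot\frac{\delta}{2}\cdot 2 = \frac{\delta}{2}$. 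Let $\Psi'$ append $\Phi$ to $\Psi$; then $\Pi_R^\Phi g$ is $\mathcal{B}_{\Psi'}$-measurable, so using that $\Pi_R^{\Psi'}$ is the orthogonal projection onto $\mathcal{B}_{\Psi'}$ and that $\Pi_R^{\Psi'}\Pi_R^\Psi = \Pi_R^\Psi$,
\begin{equation*}
\tfrac{\delta}{2} \le \big|\langle f_i - \Pi_R^\Psi f_i,\, \Pi_R^\Phi g\rangle\big| = \big|\langle \Pi_R^{\Psi'} f_i - \Pi_R^\Psi f_i,\, \Pi_R^\Phi g\rangle\big| \le \|\Pi_R^{\Psi'} f_i - \Pi_R^\Psi f_i\|_2,
\end{equation*}
since $\|\Pi_R^\Phi g\|_2 \le \|g\|_\infty \le 1$. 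By the Pythagorean identity for nested projections, $\|\Pi_R^{\Psi'} f_i\|_2^2 = \|\Pi_R^\Psi f_i\|_2^2 + \|\Pi_R^{\Psi'} f_i - \Pi_R^\Psi f_i\|_2^2 \ge \|\Pi_R^\Psi f_i\|_2^2 + \delta^2/4$. The energies of the other $f_j$ do not decrease, so the total energy $\sum_i \|\Pi_R^{\Psi'} f_i\|_2^2$ increases by at least $\delta^2/4$ at each step. Since this quantity is bounded above by $r$, there can be at most $4r\delta^{-2}$ steps; each adds at most $2$ to the dimension, and we started from dimension $0$, so the final $\Psi$ has $\dim\Psi \le 8r\delta^{-2}$, comfortably within the claimed bound $8r\delta^{-2}+1$. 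When the process halts, no $i$ violates $\|f_i - \Pi_R^\Psi f_i\|_{u_3}\le \delta$, which is exactly the conclusion.

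The only genuinely delicate point is (b)—that $\mathcal{B}_\Psi \subseteq \mathcal{B}_{\Psi'}$ when $\Psi'$ is the concatenation of $\Psi$ and $\Phi$—together with the fact that $\Pi_R^\Phi g$ is then measurable with respect to $\mathcal{B}_{\Psi'}$; both are immediate from the definition of the generating intervals $I_{R;t,u}$, since specifying the interval containing all coordinates of $\Psi'(x)$ refines the information of specifying it for the $\Psi$-coordinates alone. Everything else is bookkeeping: verifying the $\|\cdot\|_\infty$ bounds survive the rescaling, checking the $R$-threshold matches the hypothesis of Lemma \ref{u3bad_projbad_lemma} after rescaling, and confirming the dimension count. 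One should also remark that the $u_3$-norm is subadditive (it is a supremum of absolute values of linear functionals of $f$), so $\|f_i - \Pi_R^\Psi f_i\|_{u_3}$ is a legitimate quantity to test, and that $\Pi_R^\Psi f_i$ is real-valued when $f_i$ is, which we will want for the application in Lemma \ref{lem:regularity_lemma}.
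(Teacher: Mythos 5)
Your proposal is correct and follows essentially the same energy increment argument as the paper: rescale $f_i-\Pi_R^\Psi f_i$ by $\tfrac12$ to apply Lemma \ref{u3bad_projbad_lemma} with $\delta/2$, append the resulting $2$-dimensional system, and use nestedness of the projections (Pythagoras) to gain $\delta^2/4$ in energy per step, giving at most $4r\delta^{-2}$ steps. The only cosmetic difference is that you start from a $0$-dimensional system while the paper seeds the iteration with an arbitrary $1$-dimensional one (whence its bound $8r\delta^{-2}+1$), and you phrase the key identities via $\sigma$-algebra measurability rather than the operator identities $\Pi_R^\Phi\Pi_R^{\Psi_{j+1}}=\Pi_R^\Phi$, $\Pi_R^{\Psi_j}\Pi_R^{\Psi_{j+1}}=\Pi_R^{\Psi_j}$; these are equivalent.
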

\begin{proof}
We construct $\Psi$ in an iterative manner. To begin with, pick any $1$-dimensional quadratic system and let this be $\Psi_0$. At stage $j$, define $f_{i,j}=f_i-\Pi_R^{\Psi_j} f_i$. If $\|f_{i,j}\|_{u_3}\le \delta$ for all $i$ we are done and take $\Psi=\Psi_j$ of dimension $2j+1$. If not, there is some $i$ such that $\|f_{i,j}\|_{u_3}>\delta$, in which case we can apply Lemma \ref{u3bad_projbad_lemma} to $\frac{1}{2}f_{i,j}$. The factor $\frac{1}{2}$ is added as $\|f_{i,j}\|_\infty \le 2\|f_i\|_\infty \le 2$. This gives a quadratic system $\Phi$ of dimension $2$ and a function $g$ with $\|g\|_\infty\le 1$, such that
\begin{equation*}
|\langle f_{i,j}, \Pi_R^\Phi g\rangle| \ge \frac{1}{2}\delta.
\end{equation*}
Assume that $\Psi_j$ is defined by the coefficients $(a_i)_{i=1}^d$ and that $\Phi$  is defined by the coefficients $(b_i)_{i=1}^2$. We then define $\Psi_{j+1}$ as the quadratic system of dimension $d+2$ with coefficients $(a_1,\dots,a_d,b_1,b_2)$. Noting that $\Pi_R^\Phi$ is idempotent and self adjoint, and that $\Pi_R^\Phi\Pi_R^{\Psi_{j+1}}=\Pi_R^\Phi$ we get that
\begin{equation*}
\langle f_{i,j+1},\Pi_R^\Phi g\rangle = \langle \Pi_R^\Phi f_{i,j+1},\Pi_R^\Phi g\rangle = \langle \Pi_R^\Phi f_i - \Pi_R^{\Phi}\Pi_R^{\Psi_{j+1}} f_i, \Pi_R^\Phi g\rangle  = 0.
\end{equation*}
This in turn gives
\begin{equation*}
|\langle \Pi_R^{\Psi_{j+1}} f_i - \Pi_R^{\Psi_j} f_i, \Pi_R^{\Phi} g\rangle| =|\langle f_{i,j},\Pi_R^\Phi g\rangle - \langle f_{i,j+1}, \Pi_R^\Phi g\rangle| \ge \frac{1}{2}\delta,
\end{equation*}
and by the Cauchy--Schwarz inequality and the fact that $\|g\|_2\le \|g\|_\infty \le 1$ we get that
\begin{equation*}
\|\Pi_R^{\Psi_{j+1}}f_i-\Pi_R^{\Psi_j}f_i\|_2 \ge \frac{1}{2}\delta.
\end{equation*}
Using the fact that $\Pi_R^{\Psi_j}$ is idempotent and self-adjoint, and that $\Pi_R^{\Psi_j}\Pi_R^{\Psi_{j+1}} = \Pi_R^{\Psi_j}$, we get $\langle \Pi_R^{\Psi_j}f_i,\Pi_R^{\Psi_{j+1}} f_i \rangle = \|\Pi_R^{\Psi_j}f_i\|_2^2$, which then gives
\begin{equation*}
\|\Pi_R^{\Psi_{j+1}}f_i\|_2^2- \|\Pi_R^{\Psi_j} f_i\|_2^2 = \|\Pi_R^{\Psi_{j+1}}f_i-\Pi_R^{\Psi_j} f_i\|_2^2 \ge \frac{1}{4}\delta^2. 
\end{equation*}
This is what is needed to complete an energy increment argument. Indeed, defining
\begin{equation*}
E_j = \sum_{i=1}^r \|\Pi_R^{\Psi_j}f_i\|_2^2,
\end{equation*}
we get that
\begin{equation*}
E_{j+1}-E_j \ge \frac{1}{4}\delta^2.
\end{equation*}
The trivial bound $E_j \le r$ gives that we can continue the iteration process at most $4r\delta^{-2}$ times, and we are done.
\end{proof}

Note that by definition of $\Pi_R^\Psi$ we can write $\Pi_R^\Psi f = F_0\circ \Psi$ for some $F_0$. The only thing that remains to settle before embarking on the proof of Lemma \ref{lem:regularity_lemma} is a way of controlling the trig norm of $F_0$, which is what the following lemma does.

\begin{lem}[{Cf. \cite[Lemma 5.3]{green-sanders}}]\label{trig_lem}
There are functions $M_0,p_4:(0,1]\times \N \times \mb{N}\to \mb{R}_{\ge 0}$ such that the following holds. Fix $R\in\N$, a quadratic system $\Psi$ of dimension $d\in \N$ and $\eps\in(0,1]$. Then for all functions $f:\fp\to [0,1]$, provided $p\ge p_4(\epsilon,d,R)$, there is a function $F:\mb{G}^d\to \mb{R}_{\ge 0}$ such that
\begin{enumerate}[(i)]
\item $F\circ \Psi\ge \Pi_R^\Psi f$ pointwise;\\
\item $\|F\|_\mathrm{trig} \le M_0(\epsilon,d,R)$;\\
\item $\|F\circ \Psi-\Pi_R^\Psi f\|_2\le \epsilon$.
\end{enumerate}
\begin{proof}
We start by defining $A_{R;t,u} = \Psi^{-1}(I_{R;t,u})$, and note that $\Pi_R^{\Psi} f$ is constant on each of $A_{R;t,u}$. Denote this constant value by $c_{R;t,u}$, and note that then $0\le c_{R;t,u}\le 1$ by assumption. We will construct $F$ by approximating $1_{I_{R;t,u}}$ for each $t,u\in\{0,\dots,R-1\}$ and then adding these with weights $c_{R;t,u}$. 

Let $\eta>0$ and define
\begin{equation*}
I_{R;t,u}^{\pm} = \Big\{(\theta,\phi)\in\mb{G}^d : \Big\|\theta_j-\frac{t_j}{R}\Big\|_{\mb{R}/\mb{Z}}< \frac{1}{2R}\pm \eta, \Big\|\phi_j - \frac{u_j}{R}\Big\|_{\mb{R}/\mb{Z}}< \frac{1}{2R}\pm \eta\Big\}.
\end{equation*}
As the trigonometric polynomials are dense in $C(\mb{G}^d)$, given $\delta>0$ which we will specify later, we can find $F_{R;t,u}:\mb{G}^d\to\mb{R}$ which satisfy the following properties:
\begin{enumerate}
\item $0\le F_{R;t,u}\le 1+\delta$;\\
\item $F_{R;t,u}(z)\ge 1$ for all $z\in I_{R;t,u}$;\\
\item $F_{R;t,u}(z)\le \delta$ for all $z\not\in I_{R;t,u}^+$;\\
\item $\|F_{R;t,u}\|_\mathrm{trig} \le M_1(\delta,\eta,d,R)$
\end{enumerate}
for some function $M_1 : (0,1) \times (0,1) \times \N \times \N \to \R_{\ge 0}$. With this in hand we set
\begin{equation*}
F = \sum_{t,u\in \{0,\dots,R-1\}^d} c_{R;t,u} F_{R;t,u},
\end{equation*}
and note that $\Pi_R^\Psi f=F_0\circ \Psi$, where
\begin{equation*}
F_0 = \sum_{t,u\in\{0,\dots,R-1\}^d} c_{R;t,u} 1_{I_{R;t,u}}.
\end{equation*}
It remains to verify that $F$ indeed satisfies each of the properties stated in the lemma, with appropriate choices of $\eta=\eta(\epsilon,d,R)$ and $\delta=\delta(\epsilon,d,R)$.

\begin{enumerate}[(i)]
\item If $z\in A_{R;t,u}$ then $\Pi_R^\Psi f(z) = c_{R;t,u}$, and by property $(1)$ and $(2)$ above we get that $F(z)\ge c_{R;t,u} F_{R;t,u}(z)\ge c_{R;t,u}$, and so $(i)$ holds. \\
\item Set $M_0(\epsilon,d,R) = R^{2d}M_1(\delta(\epsilon,d,R), \eta(\epsilon,d,R), d ,R)$ and then the result follows by property $(4)$ above.\\
\item Begin by noting that $I_{R;t,u}^{-}\cap I_{R;t',u'}^{+}=\emptyset$ unless $(t,u)=(t',u')$. Define
\begin{equation*}
E = \bigcup_{t,u} I^+_{R;t,u}\setminus I^{-}_{R;t,u}
\end{equation*}
and let $z$ be such that $z\not\in E$. As $\bigcup_{t,u} I_{R;t,u}^+$ covers $\mb{G}^d$ we must then have that $z\in I^-_{R;t,u}$ for some $t,u$. Then
\begin{equation}\label{I-bd}
|F(z)-F_0(z)| \le \sum_{(t' , u')\ne (t,u)} c_{R;t',u'}F_{R;t',u'}(z) + c_{R;t,u}\delta \le R^{2d}\delta
\end{equation}
by properties $(1)$, $(2)$ and $(3)$ above.

Next, we'll need to bound the size of $\Psi^{-1}(E)$. If $J$ is an interval in $\mb{R}/\mb{Z}$ not containing $0$ we have that
\begin{equation}\label{interval_bd}
\#\left\{ x\in \fp : ax/p \in J \right\} \le p|J| + C,
\end{equation}
for any $a\in \fp$ and an absolute constant $C$. This is because $ax$ takes on each value in $\fp$ at most once (exactly once if $a\ne 0$). We now note that
\begin{equation*}
E \subset \bigcup_{u_1\in \{0,\dots,R-1\}} \Big\{ (\theta,\phi)\in \mb{G}^d: \Big\| \phi_1- \frac{2u_1+1}{2R}\Big\|_{\mb{R}/\mb{Z}} < \eta \Big\}
\end{equation*}
and by \eqref{interval_bd} we then get $|\Psi^{-1}(E)|\le  R(2\eta p + C) < 3R\eta p $, provided $p\ge C\eta^{-1}$. To ensure this, define $p_4(\epsilon,d,R) = C\eta^{-1}$. Using property $(1)$ and $|f(x)|\le 1$ then gives
\begin{equation}\label{E_bd}
\sum_{x \in  \Psi^{-1}(E)} \big|F\circ \Psi(x) - F_0\circ \Psi(x)\big|^2 < (R^{2d}(1+\delta)+1)^2 3 R\eta p.
\end{equation}

Combining \eqref{I-bd} and \eqref{E_bd} now gives
\begin{equation}\label{l2_bound}
  \begin{split}
    \|F\circ \Psi - \Pi_R^\Psi f\|_2^2 = \frac{1}{p} \Big(\sum_{x\not \in \Psi^{-1}(E)}+\sum_{x\in \Psi^{-1}(E)} \Big)(F\circ\Psi(x)-F_0\circ \Psi(x))^2\\
    <  R^{4d} \delta^2 + 27 R^{4d+1}\eta =  \epsilon^2,
  \end{split}
\end{equation}
where we have defined
\begin{equation*}
  \eta = \frac{\epsilon^2}{28 R^{4d+1}}, \quad\quad \delta = \frac{\epsilon}{\sqrt{28} R^{2d}}.
\end{equation*}
\end{enumerate}
\end{proof}
\end{lem}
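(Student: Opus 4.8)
The plan is to realise $\Pi_R^\Psi f$ as $F_0\circ\Psi$ for an explicit step function $F_0$ on $\G^d$, and then to smooth $F_0$ box by box into a trigonometric polynomial. Write $A_{R;t,u}=\Psi^{-1}(I_{R;t,u})$ for $t,u\in\{0,\dots,R-1\}^d$. Since $\Pi_R^\Psi f$ is the average of $f$ over the atom through $x$, it is constant on each $A_{R;t,u}$, say equal to $c_{R;t,u}$, and $0\le c_{R;t,u}\le 1$ because $0\le f\le 1$; hence $\Pi_R^\Psi f=F_0\circ\Psi$ with $F_0=\sum_{t,u}c_{R;t,u}1_{I_{R;t,u}}$. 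It then remains to produce a nonnegative trigonometric polynomial $F$ on $\G^d$ that dominates $F_0\circ\Psi$ pointwise on $\fp$, has trig norm controlled in terms of $\eps,d,R$, and agrees with $F_0$ outside a set whose $\Psi$-preimage is small.

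For the smoothing, fix a small auxiliary parameter $\eta>0$ and let $I^{\pm}_{R;t,u}$ be the boxes obtained from $I_{R;t,u}$ by replacing the half-width $1/(2R)$ of each of the $2d$ edges by $1/(2R)\pm\eta$. Using that the trigonometric polynomials are dense in $C(\G^d)$, for a second small parameter $\delta>0$ I would choose, for each $(t,u)$, a trigonometric polynomial $F_{R;t,u}$ with $0\le F_{R;t,u}\le 1+\delta$ everywhere, $F_{R;t,u}\ge 1$ on $I_{R;t,u}$, $F_{R;t,u}\le\delta$ off $I^{+}_{R;t,u}$, and $\|F_{R;t,u}\|_{\mathrm{trig}}\le M_1(\delta,\eta,d,R)$ for a bound $M_1$ uniform in $(t,u)$ (one may simply translate a single such bump on $\G^d$, which does not affect the trig norm). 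Put $F=\sum_{t,u}c_{R;t,u}F_{R;t,u}\ge 0$. Property (i) is immediate: if $x\in A_{R;t,u}$ then $\Psi(x)\in I_{R;t,u}$, so $F\circ\Psi(x)\ge c_{R;t,u}F_{R;t,u}\circ\Psi(x)\ge c_{R;t,u}=\Pi_R^\Psi f(x)$; and (ii) follows from the triangle inequality for the trig norm, since $F$ is a sum of $R^{2d}$ functions each of trig norm $\le M_1$, giving $\|F\|_{\mathrm{trig}}\le R^{2d}M_1=:M_0(\eps,d,R)$ once $\delta,\eta$ are pinned down in terms of $\eps,d,R$.

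The real content is the $L^2$ bound (iii). Let $E=\bigcup_{t,u}\big(I^{+}_{R;t,u}\setminus I^{-}_{R;t,u}\big)$, the $\eta$-neighbourhood of the union of box walls. The deflated boxes $I^{-}_{R;t,u}$ are pairwise disjoint, and in fact $I^{-}_{R;t,u}\cap I^{+}_{R;t',u'}=\emptyset$ whenever $(t,u)\ne(t',u')$ (two centres that differ in some coordinate are at $\R/\Z$-distance $\ge 1/R$ there, which already equals the sum of the two half-widths), while the inflated boxes cover $\G^d$. So if $\theta\notin E$ then $\theta\in I^{-}_{R;t,u}$ for some $(t,u)$; there $F_0(\theta)=c_{R;t,u}$ and every $F_{R;t',u'}$ with $(t',u')\ne(t,u)$ is $\le\delta$, whence $|F(\theta)-F_0(\theta)|\le c_{R;t,u}\delta+\sum_{(t',u')\ne(t,u)}c_{R;t',u'}\delta\le R^{2d}\delta$. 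On $\Psi^{-1}(E)$ I would use only the crude pointwise bound $|F\circ\Psi-F_0\circ\Psi|\le R^{2d}(1+\delta)+1$ together with a bound on $\#\Psi^{-1}(E)$: the set $E$ lies in a union of $O(dR)$ slabs $\{\theta:\|\theta_j-w\|_{\R/\Z}<\eta\}$ with $w=(2k+1)/(2R)$ a wall and $j$ running over the coordinates of each of the two $\R/\Z$-factors, and each coordinate of $\Psi$ has the form $x\mapsto a_jx/p$ or $x\mapsto a_jx^2/p$, maps whose fibres have size $O(1)$; hence $\#\{x\in\fp:\Psi(x)\text{ lies in a given slab}\}\le O(\eta p)+O(1)$, which is $O(\eta p)$ once $p$ exceeds some $p_4(\eps,d,R)$ of size proportional to $\eta^{-1}$. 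Combining, $\|F\circ\Psi-\Pi_R^\Psi f\|_2^2\ll R^{4d}\delta^2+R^{4d+1}\eta$, and choosing $\delta$ a suitable multiple of $\eps R^{-2d}$ and $\eta$ a suitable multiple of $\eps^2R^{-(4d+1)}$ makes the right-hand side at most $\eps^2$, giving (iii).

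The main obstacle is precisely this $L^2$ estimate, and inside it the bound on $\#\Psi^{-1}(E)$: one needs that $\Psi$ places only an $O(\eta)$-proportion of $\fp$ in the $\eta$-neighbourhood of the finitely many box walls, a genuine (if elementary) equidistribution input, and it is exactly here that the hypothesis $p\ge p_4(\eps,d,R)$ is used — for small $p$ a short arc can carry a disproportionate share of the values $a_jx/p$. Everything else is bookkeeping in the parameters $\delta,\eta,R,d$.
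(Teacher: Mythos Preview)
Your proposal is correct and follows essentially the same approach as the paper's proof: write $\Pi_R^\Psi f=F_0\circ\Psi$ with $F_0$ a weighted sum of box indicators, replace each indicator by a trigonometric bump $F_{R;t,u}$ satisfying the four listed properties, and verify (i)--(iii) by splitting $\fp$ according to whether $\Psi(x)$ lands in the wall-neighbourhood $E$. The only difference is cosmetic: in bounding $\#\Psi^{-1}(E)$ you cover $E$ by $O(dR)$ slabs over all $2d$ coordinates (using that both $x\mapsto a_jx/p$ and $x\mapsto a_jx^2/p$ have $O(1)$ fibres), whereas the paper covers $E$ using only the slabs in the single linear coordinate $\phi_1$, giving $R$ slabs; either bound suffices once $\eta$ is chosen appropriately.
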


At this point we can complete the proof of Lemma \ref{lem:regularity_lemma}. 
\begin{proof}[Proof of Lemma \ref{lem:regularity_lemma}]
Assume that a colouring $c$ and $\eps>0$ is given. Write $f_i=1_{c^{-1}(i)}$ for $i=1,\dots,r$ and apply Lemma \ref{koopman_von_neumann} to the $f_i$ with $\delta=\epsilon$ and $R=\ceil{16\pi \eps^{-1}}+1$ to get a quadratic system $\Psi$ of dimension $d\le 8r\epsilon^{-2}+1$. Now we apply Lemma \ref{trig_lem} to each of the $f_i$ with this choice of $\Psi$ and $R$ as before to get functions $F_i$ satisfying the conclusion of the lemma. Define $g_i = \Pi_R^\Psi f_i$ for $i=1,\dots,r$. We will now show that the functions $F_i,g_i$ satisfy the desired properties.
\begin{enumerate}[$(i)$]
\item As $d\le 8r\epsilon^{-2}+1$ we set $M(r,\epsilon) = \sup_{d\le 8r \epsilon^{-2}+1} M_0(\epsilon, d, R)$, where $M_0$ is as given in Lemma \ref{trig_lem}. Provided that $p$ satisfies the conditions of Lemma \ref{trig_lem}, this follows immediately from property $(ii)$ of Lemma \ref{trig_lem}. We therefore also set $p_3(r,\epsilon) = \sup_{d\le 8r \epsilon^{-2}+1} p_4(\epsilon, d, R)$, with $p_4$ as given in Lemma \ref{trig_lem}. Note that both of these supremums are taken over integer values of $d$, and are therefore finite.
\item This follows immediately by the choice of $\Psi$ from Lemma \ref{koopman_von_neumann}.
\item This follows immediately by the choice of $F_i$ from Lemma \ref{trig_lem}.
\item By the choice of $\Psi$ from Lemma \ref{koopman_von_neumann} together with the definition of $g_i$, we get
\begin{equation*}
    \|1_{c^{-1}(i)}-g_i\|_{u_3} = \|f_i-\Pi_R^\Psi f_i\|_{u_3} \le \epsilon.
\end{equation*}
\item By property $(i)$ of Lemma \ref{trig_lem} we get that
\begin{equation*}
\sum_i F_i\circ \Psi \ge \sum_i \Pi_R^\Psi f_i = \Pi_R^\Psi \sum_i f_i = \Pi_R^\Psi 1 = 1.
\end{equation*}
\end{enumerate}
\end{proof}

\section{Counting lemma}\label{sec:counting}
Before proving the main counting lemma, we give the proof of Lemma \ref{lem:baby_count}.
\begin{proof}[Proof of Lemma \ref{lem:baby_count}]
Write $F$ in terms of its Fourier coefficients to get
\begin{equation*}
\mb{E}_x F\circ\Psi (x) = \sum_{\xi_1,\xi_2} \hat{F}(\xi_1,\xi_2) \mb{E}_x e_p(\xi_1\cdot a\: x^2 + \xi_2\cdot a\: x)
\end{equation*}
We now use that
\begin{equation}
\mb{E}_x e_p(ax^2+bx) \ll p^{-1/2} \quad\quad\text{if } (a,b)\ne(0,0),
\label{quadratic sum}
\end{equation}
in order to discard all terms for which $\xi_1\cdot a \ne 0$ or $\xi_2\cdot a \ne 0$. This contributes an error $O(Mp^{-1/2})$, where $M=\|F\|_\mathrm{trig}$, and so we have
\begin{equation*}
\mb{E}_x F\circ\Psi (x) = \sum_{\xi_1,\xi_2\in\Lambda_\Psi} \hat{F}(\xi_1,\xi_2) + O(Mp^{-1/2}).
\end{equation*}
At the same time we have
\begin{equation*}
\int F \: d\mu_{H_\Psi} = \sum_{\xi_1,\xi_2} \hat{F}(\xi_1,\xi_2) \int e(\xi_1\cdot t)e(\xi_2\cdot u) \: d\mu_{H_\Psi}(t,u),
\end{equation*}
which by Lemma \ref{orthog_lemma} is equal to the main term above.
\end{proof}

\begin{proof}[Proof of Lemma \ref{counting_lemma}]
Write $a= (a_i)_{i=1}^d$ for the coefficients of the quadratic system $\Psi$. Using the Fourier expansion of $F$ and multilinearity of $T$ one gets
\begin{multline*}
T(F\circ\Psi, F\circ\Psi, F\circ \Psi) = \sum_{\xi_1,\dots,\xi_6\in \Z^d} \hat{F}(\xi_1,\xi_2)\hat{F}(\xi_3,\xi_4)\hat{F}(\xi_5,\xi_6) T(f_1,f_2,f_3)
\end{multline*}
where $f_1(x) = e_p(\xi_1\cdot a\: x^2 + \xi_2\cdot a\: x), f_2(y) = e_p(\xi_3\cdot a\: y^2+\xi_4\cdot a\: y)$ and $f_3(z) = e_p(\xi_5\cdot a\: z^2+\xi_6\cdot a\: z)$. We proceed as in Proposition \ref{normbound_prop} to get that 
\begin{equation*}
T(f_1,f_2,f_3)= \sum_{\xi\in \fp} \hat{f}_1(-\xi) \hat{f}_2(-\xi)\hat{g}(\xi).
\end{equation*}
Now $\hat{f}_1(-\xi)\ll p^{-1/2}$ unless $\xi_1\cdot a = 0$ and $\xi_2\cdot a +\xi =0$, by \eqref{quadratic sum}. Similarly we have $\hat{f}_2(-\xi)\ll p^{-1/2}$ unless $\xi_3\cdot a=0$ and $\xi_4\cdot a+\xi = 0$, and from considering $f_3$ we have that $\hat{g}(\xi) \ll p^{-1/2}$ unless $\xi_5\cdot a - \xi = 0$ and $\xi_6\cdot a = 0$. If all of these fail at once we say that $(\xi_1,\dots,\xi_6,\xi)$ is \emph{exceptional}. For the sum over non-exceptional values we apply Proposition \ref{normbound_prop} to get that $T(f_1,f_2,f_3)\ll p^{-1/2}$, and then we bound the sum over $\xi_1,\dots,\xi_6$ by $M^3$.

For the exceptional values we have that $\xi = -\xi_2\cdot a= -\xi_4\cdot a=\xi_5\cdot a$ and $\xi_1\cdot a = \xi_3\cdot a=\xi_6\cdot a = 0$. Recalling \eqref{lambda} we may write this as
\begin{equation*}
T(F\circ\Psi,F\circ\Psi,F\circ\Psi) = \sum_{\substack{\xi_1,\xi_3,\xi_6\in\Lambda_\Psi\\\xi_2+\xi_5, \xi_4+\xi_5\in \Lambda_\Psi}} \hat{F}(\xi_1,\xi_2)\hat{F}(\xi_3,\xi_4)\hat{F}(\xi_5,\xi_6) + O(p^{-1/2}M^3).
\end{equation*}
By inserting the Fourier expansion of $F$ into the integral in \eqref{counting_eq} and using the orthogonality relation in Lemma \ref{orthog_lemma} we are done, in exactly the same way as in the proof of Lemma \ref{lem:baby_count}.
\end{proof}

\section{Ramsey lemma}\label{sec:ramsey}
The linear Ramsey problem addressed by the Ramsey lemma is to count triples $(t,u)$, $(t',u')$, $(t^{\prime\prime},u^{\prime\prime})$ such that $u+u' = t^{\prime\prime}$. In \cite{green-sanders} Green--Sanders's linear Ramsey problem appears quite similar to ours, namely finding $(t, u)$, $(t', u')$ and $(t^{\prime\prime}, u^{\prime\prime})$ such that $u^{\prime\prime} = t'-t$ and $u=u^\prime$. Although their problem doesn't directly imply ours, it turns out that their proof can be easily adapted to solve our problem, which is the approach we take here. We note that the basic scheme of their proof is inspired by \cite{cwalina}.

\begin{lem}\label{colouring lemma}
Let $(X,\nu_X)$ and $(Y,\nu_Y)$ be probability spaces, $A\subset X\times Y$, $(\nu_X\times\nu_Y)(A)=\alpha$ and $\eta\in (0,1]$ be a parameter. Then there is a measurable set $Y'\subset Y$, with $\nu_Y(Y')\ge \frac{1}{2}\alpha$, such that the set
\begin{equation*}
E = \Big\{ y\in Y: \nu_X({x\in X: (x,y)\in A})\le \frac{1}{2}\eta \alpha \Big\}
\end{equation*}
satisfies $\nu_Y(E\cap Y')\le \eta \nu_Y(Y')$.
\begin{proof}
Define
\begin{equation*}
\begin{split}
N_X(y) = \left\{ x\in X: (x,y)\in A \right\},\\
N_Y(x) = \left\{ y\in Y: (x,y)\in A \right\}.
\end{split}
\end{equation*}
Then
\begin{equation*}
\alpha = (\nu_X\times\nu_Y)(A) = \int \nu_X(N_X(y))\:d\nu_Y(y)
\end{equation*}
by Fubini's theorem. By the definition of $E$ we have
\begin{equation*}
\int 1_E (y) \nu_X(N_X(y))\:d\nu_Y(y) \le \frac{1}{2}\eta \alpha,
\end{equation*}
so that
\begin{equation*}
\int \Big( 1-\frac{1}{\eta}1_E(y) \Big) \nu_X(N_X(y))\:d\nu_Y(y) \ge \frac{1}{2}\alpha.
\end{equation*}
Another application of Fubini's theorem then gives
\begin{equation*}
\iint \left( 1-\frac{1}{\eta}1_E(y) \right) 1_{N_Y(x)}(y)\: d\nu_Y(y)d\nu_X(x) \ge \frac{1}{2}\alpha,
\end{equation*}
and in particular there is some $x\in X$ such that
\begin{equation}
\int \left( 1-\frac{1}{\eta}1_E(y) \right) 1_{N_Y(x)}(y) d\nu_Y(y) \ge \frac{1}{2}\alpha.
\label{nice stuff}
\end{equation}
For this $x$, set $Y' = N_Y(x)$. From \eqref{nice stuff} one gets that $\nu_Y(Y')\ge \frac{1}{2}\alpha$ and that $\nu_Y(E\cap Y')\le \eta \nu_Y(Y')$, as desired.
\end{proof}
\end{lem}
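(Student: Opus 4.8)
The plan is to run a double-counting argument with a carefully chosen test weight, taking $Y'$ to be the $A$-neighbourhood of a well-chosen point of $X$. First I would introduce the cross-sections
\begin{equation*}
N_X(y) = \{x\in X:(x,y)\in A\},\qquad N_Y(x) = \{y\in Y:(x,y)\in A\},
\end{equation*}
so that Fubini's theorem gives $\alpha = \int_Y \nu_X(N_X(y))\,d\nu_Y(y)$. Everything in sight is bounded and the measures are finite, so all the integrals and interchanges below are legitimate, and $E$ is measurable as a sublevel set of $y\mapsto\nu_X(N_X(y))$.

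Next I would isolate the contribution of the low-density fibres. By the very definition of $E$, every $y\in E$ satisfies $\nu_X(N_X(y))\le \tfrac12\eta\alpha$, hence $\int 1_E(y)\,\nu_X(N_X(y))\,d\nu_Y(y)\le \tfrac12\eta\alpha\,\nu_Y(E)\le\tfrac12\eta\alpha$. Subtracting $\tfrac1\eta$ times this from the total yields
\begin{equation*}
\int \Big(1-\tfrac1\eta 1_E(y)\Big)\,\nu_X(N_X(y))\,d\nu_Y(y)\ge \alpha-\tfrac12\alpha = \tfrac12\alpha.
\end{equation*}
The role of the weight $1-\tfrac1\eta 1_E$ is exactly this: it is harmless on the bulk of $Y$ but carries a large negative coefficient on $E$, so the inequality above already encodes that $A$ must be ``spread out'' away from $E$.

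Now I would unfold $\nu_X(N_X(y)) = \int_X 1_A(x,y)\,d\nu_X(x)$ and swap the order of integration, rewriting the left-hand side as $\int_X\big(\int_Y (1-\tfrac1\eta 1_E(y))\,1_{N_Y(x)}(y)\,d\nu_Y(y)\big)\,d\nu_X(x)$. Since this average over $x\in X$ is at least $\tfrac12\alpha$, some particular point $x_0\in X$ realises the bound; I would then set $Y' = N_Y(x_0)$. The inequality $\int_Y (1-\tfrac1\eta 1_E(y))\,1_{Y'}(y)\,d\nu_Y(y)\ge\tfrac12\alpha$ unwinds to $\nu_Y(Y') - \tfrac1\eta\nu_Y(E\cap Y')\ge\tfrac12\alpha$. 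The subtracted term is non-negative, so this forces $\nu_Y(Y')\ge\tfrac12\alpha$; and rearranging gives $\nu_Y(E\cap Y')\le\eta\big(\nu_Y(Y')-\tfrac12\alpha\big)\le\eta\,\nu_Y(Y')$, which is precisely the claim.

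I do not expect a genuine obstacle here: the whole proof is two applications of Fubini together with the observation that integrating over $y$ first lets one control the bad set $E$, after which integrating over $x$ produces a single good point $x_0$. The only things to watch are the measurability of $E$ and of the cross-sections (automatic in the product setting) and the bookkeeping of the constants $\tfrac12$ and $\eta$; choosing $Y'=N_Y(x_0)$ is what makes $E\cap Y'$ literally the set of $y\in Y'$ whose own $X$-degree is small, so the conclusion really is a statement about the ``bipartite graph'' $A$ restricted to $Y'$, as needed for the iteration in the Ramsey lemma.
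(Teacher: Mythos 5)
Your proposal is correct and is essentially identical to the paper's own proof: the same cross-sections, the same weight $1-\tfrac1\eta 1_E$, the same two applications of Fubini, and the same choice $Y'=N_Y(x_0)$ for a point $x_0$ realising the average. The only difference is that you spell out the final rearrangement slightly more explicitly than the paper does.
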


Throughout this section, let $G$ be a compact Abelian group with probability Haar measure $\mu$. For any measurable subset $T\subset G$ we write $\mu_T= \frac{1}{\mu(T)}\left.\mu\right|_T$. Define
\begin{equation}
\delta_T(A) = \int 1_A( t_1+t_2,t)\: d\mu_T^{\otimes 3}(t_1,t_2,t)
\label{delta_T}
\end{equation}
and
\begin{equation}
\Lambda_T(A) = \int 1_A(t_1+t_6,t_2)1_A(t_3+t_7,t_4)1_A(t_4+t_2,t_5) d\mu_T^{\otimes 7}(t_1,\dots,t_7).
\label{lambda_T}
\end{equation}

The main Ramsey lemma will follow from the following slightly stronger proposition. We need to allow for partial colourings and make $\rho(r)$ explicit in order to perform induction on $r$.

\begin{prop}\label{ramsey_prop}
Set $\epsilon_r=2^{-7r}(r!)^{-3}$. Suppose that $G$ is a compact Abelian group, $T\subset G$ is a measurable set, $E\subset (T+T)\times T$ is measurable and $c:(T+T)\times T\to [r]$ is a measurable partial colouring defined outside of $E$, where $\delta_T(E) \le \epsilon_r$. Then there is an $i\in[r]$ which satisfies $\Lambda_T(c^{-1}(i))\ge \epsilon_r^3$. 
\begin{proof}
We proceed by induction on $r$. For $r=1$ we have that $\Lambda_T(c^{-1}(1)) = \Lambda_T(((T+T)\times T)\setminus E) \ge 1-3\delta_T(E) \ge 1-3\eps_1>\eps_1^3$, and so the proposition holds in this case.

Assume that the statement holds for $r-1$ colours. Now $\epsilon_r < \frac{1}{2}$ so by the pigeon hole principle there is some $i$ with $\delta_T(A_i)\ge \frac{1}{2r}$, where we have defined 
\begin{equation*}
A_i = c^{-1}(i),\quad i=1,\dots,r.
\end{equation*}
Apply Lemma \ref{colouring lemma} with $X=T+T$, $Y=T$, $A=A_i$, $\eta=\frac{1}{6}\epsilon_{r-1}$, $\nu_Y=\mu_T$ and $\nu_X$ defined by
\[ \nu_X(C) = \int 1_C(t_1+t_2) \: d\mu_T^{\otimes 2}(t_1,t_2).\]
Note that $\nu_X\times \nu_Y=\delta_T$. The lemma gives us some $T'\subset T$ with $\mu_T(T')\ge \frac{1}{4r}$ and some $Z\subset T$ with a proportion $1-\eta$ of all of the elements in $T'$, which satisfies 
\begin{equation*}
\int 1_{A_i}(t_1+t_2,t)\:d\mu_T^{\otimes 2}(t_1,t_2)\ge \frac{1}{4r}\eta
\end{equation*}
for all $t$ in $Z$.

We now consider two cases. In the first case, assume that $\delta_{T'}(A_i)\ge \frac{1}{2}\epsilon_{r-1}$. Then restricting the integrals over $t_2,t_4$ and $t_5$ in \eqref{lambda_T} to $T'$ gives
\begin{multline*}
\Lambda_T(A_i) \ge \mu_T(T')^3 \int 1_{A_i}(t_2+t_4,t_5)\left(\int 1_{A_i}(t_1+t_6,t_2)\:d\mu_T^{\otimes 2}(t_1,t_6)\right)\\ \cdot\left( \int 1_{A_i}(t_3+t_7,t_4)\:d\mu_T^{\otimes 2}(t_3,t_7) \right)
\: d\mu_{T'}^{\otimes 3}(t_2,t_4,t_5)\\
\ge \mu_T(T')^3\frac{\eta^2}{2^4r^2} \int 1_{A_i}(t_2+t_4,t_5)1_Z(t_2)1_Z(t_4)\: d\mu_{T'}(t_2,t_4,t_5)\\
\ge \mu_T(T')^3\frac{\eta^2}{2^4r^2} \left( \int 1_{A_i}(t_2+t_4,t_5)\:d\mu_{T'}(t_2,t_4,t_5) - \frac{1}{3}\epsilon_{r-1} \right)\\
\ge \mu_T(T')^3\frac{\eta^2}{2^4r^2} \frac{1}{6}\epsilon_{r-1}\ge 2^{-13}3^{-3} r^{-5}\epsilon_{r-1}^3 = 2^83^{-3} r^4 \epsilon_r^3 > \epsilon_r^3,
\end{multline*}
and we are done in this case. If instead $\delta_{T'}(A_i)<\frac{1}{2}\epsilon_{r-1}$ we will use that
\begin{equation*}
\delta_{T'}(E) \le \mu_T(T')^{-3} \delta_T(E) \le 2^6 r^3\epsilon_r = \frac{1}{2}\epsilon_{r-1}.
\end{equation*}
Defining $E' = (E\cup A_i)\cap ( (T'+T')\times T')$ then gives $\delta_{T'}(E')\le \epsilon_{r-1}$. We now have a partial colouring of $(T'+T')\times T'$ outside $E'$ using $r-1$ colours, and the inductive hypothesis then gives some colour class $A_j$ with $\Lambda_{T'} ( A_j)\ge \epsilon_{r-1}^3$. Considering the corresponding colour class $A_j$ on $(T+T)\times T$ gives
\begin{equation*}
\Lambda_T(A_j) \ge \mu_T(T')^7 \Lambda_{T'}(A_j) \ge (4r)^{-7}\epsilon_{r-1}^3 = 2^7 r^2 \epsilon_r^3> \epsilon_r^3,
\end{equation*}
and so we are done.
\end{proof}
\end{prop}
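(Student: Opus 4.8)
The plan is to prove Proposition \ref{ramsey_prop} by induction on the number of colours $r$; the reason the statement is phrased with an auxiliary set $T$ rather than just $G$ is precisely so that the inductive step can be run inside a dense subset $T'\subseteq T$. I take $\epsilon_r=2^{-7r}(r!)^{-3}$, for which $\epsilon_{r-1}=2^7r^3\epsilon_r$; this is the relation that will make the two branches of the argument close up. For the base case $r=1$ the only colour class is $A_1=((T+T)\times T)\setminus E$, and I would bound $\Lambda_T(A_1)$ from below by a union bound over the three factors in \eqref{lambda_T}: after observing that the marginals of $\mu_T^{\otimes 7}$ governing $(t_1+t_6,t_2)$, $(t_3+t_7,t_4)$ and $(t_4+t_2,t_5)$ each coincide with the measure defining $\delta_T$, each factor fails on a set of $\mu_T^{\otimes 7}$-measure exactly $\delta_T(E)$, so $\Lambda_T(A_1)\ge 1-3\delta_T(E)\ge 1-3\epsilon_1>\epsilon_1^3$.

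For the inductive step, assume the statement for $r-1$ colours. Since $\delta_T(E)\le\epsilon_r<\tfrac12$, the classes $A_i=c^{-1}(i)$ satisfy $\sum_i\delta_T(A_i)\ge\tfrac12$, so pigeonhole gives an $i$ with $\delta_T(A_i)\ge\tfrac1{2r}$. I would then apply Lemma \ref{colouring lemma} with $X=T+T$, $Y=T$, $A=A_i$, parameter $\eta=\tfrac16\epsilon_{r-1}$, $\nu_Y=\mu_T$, and $\nu_X$ the push-forward of $\mu_T^{\otimes 2}$ under addition, so that $\nu_X\times\nu_Y=\delta_T$ and $\alpha=\delta_T(A_i)\ge\tfrac1{2r}$. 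The lemma produces a set $T'\subseteq T$ with $\mu_T(T')\ge\tfrac1{4r}$ together with a subset $Z\subseteq T'$ of relative measure $\ge 1-\eta$ on which the horizontal fibre $\int 1_{A_i}(t_1+t_2,t)\,d\mu_T^{\otimes 2}(t_1,t_2)$ is at least $\tfrac{\eta}{4r}$.

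The heart of the argument is then a dichotomy on the size of $\delta_{T'}(A_i)$. If $\delta_{T'}(A_i)\ge\tfrac12\epsilon_{r-1}$, I would estimate $\Lambda_T(A_i)$ directly from \eqref{lambda_T}: restrict the $t_2,t_4,t_5$ integrations to $T'$ (costing $\mu_T(T')^3$), use the fibre bound for the $(t_1,t_6)$ and $(t_3,t_7)$ integrations once $t_2,t_4\in Z$ (gaining $(\eta/4r)^2$), and finally drop $1_Z(t_2)1_Z(t_4)$ at the cost $2\eta=\tfrac13\epsilon_{r-1}$, leaving $\delta_{T'}(A_i)-\tfrac13\epsilon_{r-1}\ge\tfrac16\epsilon_{r-1}$; collecting constants gives $\Lambda_T(A_i)\ge 2^{-13}3^{-3}r^{-5}\epsilon_{r-1}^3=2^8 3^{-3}r^4\epsilon_r^3>\epsilon_r^3$. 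If instead $\delta_{T'}(A_i)<\tfrac12\epsilon_{r-1}$, I would pass to $T'$: since $\delta_{T'}(E)\le\mu_T(T')^{-3}\delta_T(E)\le 2^6 r^3\epsilon_r=\tfrac12\epsilon_{r-1}$, the set $E'=(E\cup A_i)\cap((T'+T')\times T')$ satisfies $\delta_{T'}(E')\le\epsilon_{r-1}$, and $c$ restricted to $((T'+T')\times T')\setminus E'$ is a partial colouring using only the $r-1$ colours different from $i$; the inductive hypothesis yields $j\ne i$ with $\Lambda_{T'}(A_j)\ge\epsilon_{r-1}^3$, and because the integrand in \eqref{lambda_T} is nonnegative, restricting all seven $\mu_T$-integrations to $T'$ gives $\Lambda_T(A_j)\ge\mu_T(T')^7\Lambda_{T'}(A_j)\ge(4r)^{-7}\epsilon_{r-1}^3=2^7 r^2\epsilon_r^3>\epsilon_r^3$.

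I expect the main obstacle to be the constant bookkeeping rather than any conceptual difficulty: one must choose $\epsilon_r$ so that both branches of the dichotomy simultaneously close the induction, and keep careful track of the normalisation factors $\mu_T(T')^{-1}$ incurred whenever one passes between $\delta_T,\Lambda_T$ and their $T'$-counterparts (three such factors in the bound for $\delta_{T'}(E)$, seven in $\Lambda_{T'}\to\Lambda_T$, and three in the direct estimate). The supporting measure-theoretic facts — that the relevant marginals of $\mu_T^{\otimes 7}$ reproduce the measure defining $\delta_T$, and that $\Lambda_{T'}$ only sees configurations lying inside $(T'+T')\times T'$ so restriction to $T'$ loses nothing beyond the normalisation — are routine but should be stated cleanly before the main computation.
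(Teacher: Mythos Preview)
Your proposal is correct and follows the paper's proof essentially verbatim: the same induction scheme, the same application of Lemma~\ref{colouring lemma} with parameters $\eta=\tfrac16\epsilon_{r-1}$ and $\alpha\ge\tfrac1{2r}$, the same dichotomy on $\delta_{T'}(A_i)$, and the same constant computations $2^{-13}3^{-3}r^{-5}\epsilon_{r-1}^3$ and $(4r)^{-7}\epsilon_{r-1}^3$ in the two branches. Your closing remarks about making the marginal identifications explicit are a sensible addition but do not represent a departure from the paper's argument.
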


\begin{proof}[Proof of Lemma \ref{ramsey_lemma}]
Put $f_i=\min(F_i,1)$, such that $0\le f_i \le 1$ and\\ $\sum_{i=1}^r f_i(g_1,g_2)\ge 1$ for all $(g_1,g_2)\in G\times G$. Define
\begin{equation}
A_i = \Big\{ (t,u)\in G\times G: f_i(t,u)\ge \frac{1}{r}, (t,u)\notin \bigcup_{j=1}^{i-1} A_j \Big\}.
\label{Ai}
\end{equation}
If $(t,u)\not\in\bigcup_i A_i$ then $\sum_i f_i(t,u)<1$, and so we must have $\bigcup_i A_i = G\times G$. Restricting to $A_i$ we get
\begin{multline*}
\int f_i(t,u) f_i(t',u')f_i(u+u',u^{\prime\prime})\:d\mu^{\otimes 5}(t,t',u,u',u^{\prime\prime})\\
\ge \frac{1}{r^3} \int 1_{A_i}(t,u) 1_{A_i}(t',u')1_{A_i}(u+u',u^{\prime\prime})\:d\mu^{\otimes 5}(t,t',u,u',u^{\prime\prime})
\end{multline*}
Introducing two dummy integrations and renaming the variables then gives
\begin{equation*}
\frac{1}{r^3} \int 1_{A_i}(t_1+t_6,t_2) 1_{A_i}(t_3+t_7,t_4)1_{A_i}(t_2+t_4,t_5)\:d\mu^{\otimes 7}(t_1,\dots,t_7).
\end{equation*}
Setting $T=G$ in Proposition \ref{ramsey_prop} we finally get that for some $i$ this is $\ge r^{-3}\epsilon_r^3$, which is the desired function $\rho(r)$.
\end{proof}

\section{Proof of main theorem in the general case}\label{sec:main2}
Before sketching the proof of Theorem \ref{thm:main2}, we note that one needs three different versions of the Ramsey lemma, depending on the exponents $\alpha,\beta,\gamma$ in \eqref{problem2}. The first version is needed when $\alpha=\beta=\gamma$, the second version is needed when exactly two of $\alpha,\beta$ and $\gamma$ are equal, and the third version is used when all of $\alpha,\beta$ and $\gamma$ are different. The necessity for different Ramsey lemmas arises because the conclusion of the counting lemma depends on the number of unique exponents, and this is of course what dictates the correct shape of the Ramsey lemma.

As mentioned in the introduction, the case where $\alpha=\beta=\gamma$ was already solved in \cite{csikvari-gyarmati-sarkozy} by using Schur's result on partition regularity of the equation $x+y=z$. Rather satisfyingly this is the same equation we end up having to address in the Ramsey lemma when using our methods.

At a first glance it may seem strange that the proof of Theorem \ref{thm:main} carries over to Theorem \ref{thm:main2} without any major modifications. In particular we note that the proof does not rely in any significant way on the fact that \eqref{problem} has a linear variable.

We begin by noting the changes that need to be made to the basic definitions in \S \ref{sec:count_sol} and \S \ref{sec:quad_trig}. This will essentially consist of redefining $T(f_1,f_2,f_3)$, $\Psi(x)$ and defining a new norm.

We consider the quantity
\begin{equation*}
T(f_1,f_2,f_3) = \frac{1}{p} \sum_{\substack{x,y,z\in\mb{F}_p\\ x^\alpha+y^\beta=z^\gamma}} f_1(x) f_2(y) f_3(z),
\end{equation*}
and define a norm
\begin{equation*}
\|f\|_{\alpha,\beta,\gamma} \coloneqq \sup\big\{|\mb{E}_x f(x) e_p(ax^\alpha+bx^\beta+cx^\gamma)|: a,b,c\in \mb{F}_p \big\}.
\end{equation*}
In this setting the analogue of Proposition \ref{normbound_prop} becomes the following.
\begin{prop}
If $\|f_1\|_2,\|f_2\|_2,\|f_3\|_2\le 1$ then
\begin{equation*}
|T(f_1,f_2,f_3)| \le\sqrt{\alpha\beta\gamma} \min_i (\|f_i\|_{\alpha,\beta,\gamma}).
\end{equation*}
\end{prop}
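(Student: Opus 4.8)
The plan is to mimic the proof of Proposition~\ref{normbound_prop} almost verbatim, the only genuinely new point being to keep track of how many $x\in\fp$ solve $x^\alpha=u$. Write $d_\alpha=\gcd(\alpha,p-1)$, and similarly $d_\beta,d_\gamma$; recall that $x\mapsto x^\alpha$ maps $\fp$ onto $\{0\}\cup(\fp^\times)^{d_\alpha}$ with every nonzero value in the image attained exactly $d_\alpha$ times, so in particular $d_\alpha\le\alpha$. Set $g_1(u)=\sum_{x:\,x^\alpha=u}f_1(x)$, $g_2(v)=\sum_{y:\,y^\beta=v}f_2(y)$ and $g_3(w)=\sum_{z:\,z^\gamma=w}f_3(z)$, each taken to be $0$ when the relevant equation is unsolvable. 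Grouping the solutions of $x^\alpha+y^\beta=z^\gamma$ by the values $u=x^\alpha$, $v=y^\beta$, $w=z^\gamma$ and using $u+v=w$, the convolution identity and Fourier inversion give, exactly as in the derivation of \eqref{eh},
\[ T(f_1,f_2,f_3)=\sum_\xi \hat g_1(-\xi)\,\hat g_2(-\xi)\,\hat g_3(\xi). \]

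The next step is to record the two estimates that replace their linear counterparts. First, Cauchy--Schwarz on each fibre of $x\mapsto x^\alpha$ gives $|g_1(u)|^2\le d_\alpha\sum_{x^\alpha=u}|f_1(x)|^2$, and summing over $u$ yields $\|g_1\|_2^2\le d_\alpha\|f_1\|_2^2\le\alpha\|f_1\|_2^2$, with the same bound for $g_2,g_3$ in terms of $\beta,\gamma$. For $g_3$ with $\gamma=2$ this is precisely the $\sqrt2$ computation in Proposition~\ref{normbound_prop} (the $2$ being $\#\{z:z^2=w\}$); the new feature is merely that the same phenomenon now occurs in all three coordinates. Second, unwinding the Fourier transform gives $\hat g_1(\xi)=\E_x f_1(x)e_p(-x^\alpha\xi)$, so $\|g_1\|_{u_2}=\sup_\xi|\E_x f_1(x)e_p(-x^\alpha\xi)|\le\|f_1\|_{\alpha,\beta,\gamma}$ --- take $(a,b,c)=(-\xi,0,0)$ in the supremum defining that norm, which is legitimate regardless of whether $\alpha,\beta,\gamma$ coincide --- and similarly $\|g_2\|_{u_2}\le\|f_2\|_{\alpha,\beta,\gamma}$ and $\|g_3\|_{u_2}\le\|f_3\|_{\alpha,\beta,\gamma}$.

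Finally I would assemble the three bounds from the displayed Fourier identity by Cauchy--Schwarz and Parseval, exactly as at the end of the proof of Proposition~\ref{normbound_prop}: pulling out $\|g_1\|_{u_2}$ and applying Parseval to the remaining two factors gives $|T|\le\|g_1\|_{u_2}\|g_2\|_2\|g_3\|_2\le\sqrt{\beta\gamma}\,\|f_1\|_{\alpha,\beta,\gamma}$, using $\|f_2\|_2,\|f_3\|_2\le1$; symmetrically $|T|\le\sqrt{\alpha\gamma}\,\|f_2\|_{\alpha,\beta,\gamma}$ and $|T|\le\sqrt{\alpha\beta}\,\|f_3\|_{\alpha,\beta,\gamma}$. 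Since each of these three right-hand sides is at most $\sqrt{\alpha\beta\gamma}$ times the norm it contains, keeping whichever bound is attached to the $f_i$ of smallest $\|\cdot\|_{\alpha,\beta,\gamma}$ gives $|T|\le\sqrt{\alpha\beta\gamma}\min_i\|f_i\|_{\alpha,\beta,\gamma}$, as desired.

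I do not anticipate any real obstacle: this is a routine generalisation. The only point requiring a little care is the arithmetic of the fibres of the power maps --- the appearance of $d_\alpha=\gcd(\alpha,p-1)\le\alpha$ in place of the factor $2$ coming from square roots, and checking that the point $0$, where the fibre has size $1$, is harmless --- and this is exactly what turns the constant $\sqrt2$ of the case $x+y=z^2$ into $\sqrt{\alpha\beta\gamma}$.
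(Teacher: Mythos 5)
Your proposal is correct and is essentially the paper's own argument: the paper's (sketched) proof likewise defines $g_1(w)=\sum_{x^\alpha=w}f_1(x)$, $g_2$, $g_3$ and repeats the manipulations of Proposition \ref{normbound_prop}, with the fibre-size bound $\|g_i\|_2\le\sqrt{\alpha}\,\|f_1\|_2$ (etc.) supplying the constant $\sqrt{\alpha\beta\gamma}$. Your write-up simply fills in the details the paper leaves implicit (the Cauchy--Schwarz on fibres and the specialisation $(a,b,c)=(-\xi,0,0)$ in the $\|\cdot\|_{\alpha,\beta,\gamma}$ norm), and these are all accurate.
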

\begin{proof}[Sketch of proof]
Define $g_1(w) = \sum_{x^\alpha = w} f_1(x)$, $g_2(w) = \sum_{u^\beta=w}f_2(u)$ and $g_3(w) =\sum_{z^\gamma=w}f_3(z)$, and perform the same manipulations as in the proof of Proposition \ref{normbound_prop}.
\end{proof} 
We also have the analogue of Lemma \ref{l2_bound_lemma}, at the cost of a constant factor.
\begin{lem}
Let $f_1,f_2,f_3:\fp\to \C$. Then
\[ |T(f_1,f_2,f_3)| \le \sqrt{k} \|f_1\|_2 \|f_2\|_2 \|f_3\|_2 ,\]
where $k = \min\{\alpha,\beta,\gamma\}$.
\end{lem}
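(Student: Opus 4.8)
The plan is to mimic the proof of Lemma \ref{l2_bound_lemma}, the only new feature being the monomial maps $x\mapsto x^\alpha$, $y\mapsto y^\beta$, $z\mapsto z^\gamma$. As there, after replacing each $f_i$ by $|f_i|$ we may assume $f_1,f_2,f_3\ge 0$.

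First I would solve the equation for the variable carrying the smallest exponent. Using the symmetry between $x^\alpha$ and $y^\beta$ and the fact that $z^\gamma$ can be taken to the other side, assume this is $x$, so $\alpha=k=\min\{\alpha,\beta,\gamma\}$. Grouping the defining sum of $T$ by the common value $x^\alpha=z^\gamma-y^\beta$ gives
\[
T(f_1,f_2,f_3)=\E_{y,z}\Big(f_2(y)f_3(z)\sum_{x:\,x^\alpha=z^\gamma-y^\beta}f_1(x)\Big).
\]
Every fibre $\{x:x^\alpha=w\}$ has at most $\gcd(\alpha,p-1)\le k$ points, so Cauchy--Schwarz applied to the inner sum extracts a factor $\sqrt k$ and replaces the inner sum by $\big(\sum_{x^\alpha=z^\gamma-y^\beta}f_1(x)^2\big)^{1/2}$.

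It then remains to bound $\E_{y,z}f_2(y)f_3(z)\big(\sum_{x^\alpha=z^\gamma-y^\beta}f_1(x)^2\big)^{1/2}$ by $\|f_1\|_2\|f_2\|_2\|f_3\|_2$, which is where I would transcribe the Cauchy--Schwarz step from Lemma \ref{l2_bound_lemma}: Cauchy--Schwarz in one of the two remaining variables, the identity $\sum_w\sum_{x^\alpha=w}f_1(x)^2=p\|f_1\|_2^2$, and $\|f_j\|_1\le\|f_j\|_2$ for the last variable. (Equivalently, after pushing $f_2$ and $f_3$ forward along their monomial maps one is looking at an average $\E_{u,v}h_1(u)h_2(v)h_3(u+v)$, to which the Fourier--Parseval manipulation of Proposition \ref{normbound_prop} applies.)

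I would expect the one genuinely delicate point to be the precise constant. Executed naively, the last step passes through the fibres of \emph{two} of the three monomial maps, so it yields the above with $\sqrt k$ replaced by $\sqrt{k\cdot\ell}$, where $\ell$ is the second smallest exponent (and one can always fall back on the clean but cruder $\sqrt{\alpha\beta\gamma}$, which the pushforward--Parseval argument gives with no extra work). To cut the loss down to a single fibre, and hence to the stated $\sqrt k$, one needs finer control on the interaction of $z\mapsto z^\gamma$ and $y\mapsto y^\beta$ --- essentially a count $\#\{(y,z):z^\gamma-y^\beta=c\}=p+O_{\beta,\gamma}(p^{1/2})$ for $c\ne 0$ --- together with the separate treatment of the degenerate case $c=0$ (equivalently $x=0$), whose contribution to $T$ is only $O_{\alpha,\beta,\gamma}(p^{-1/2})\|f_1\|_2\|f_2\|_2\|f_3\|_2$ and hence harmless.
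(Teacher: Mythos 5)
Your proposal takes essentially the same route as the paper: the paper's own proof is a two-line sketch that, assuming $k=\alpha$, pushes $f_1$ forward along $x\mapsto x^\alpha$ by setting $g(w)=\sum_{x^\alpha=w}f_1(x)$, repeats the Cauchy--Schwarz chain of Lemma \ref{l2_bound_lemma}, and invokes $\|g\|_2\le\sqrt{\alpha}\,\|f_1\|_2$ --- exactly your first two steps. The ``delicate point'' you single out is genuine and is \emph{not} addressed in the paper: since $y\mapsto z^\gamma-y^\beta$ is no longer a bijection for fixed $z$ (unlike $x\mapsto z^2-x$ in Lemma \ref{l2_bound_lemma}), the literal transcription only yields the constant $\sqrt{k\ell}$ with $\ell$ the second smallest exponent (or the cruder $\sqrt{\alpha\beta\gamma}$), and the paper's sketch simply states $\sqrt{k}$ without carrying out anything like the Weil-type count $\#\{(y,z):z^\gamma-y^\beta=c\}=p+O_{\beta,\gamma}(p^{1/2})$, $c\neq 0$, that you outline. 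Since the main theorem only needs \emph{some} bound of the form $C(\alpha,\beta,\gamma)\|f_1\|_2\|f_2\|_2\|f_3\|_2$, your fallback constant is entirely sufficient for every use of this lemma, so your more cautious version of the argument is if anything more honest than the paper's.
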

\begin{proof}[Sketch of proof]
Assume that $k=\alpha$ and define $g(w)=\sum_{x^\alpha=w} f_1(x)$. Doing as in the proof of Lemma \ref{l2_bound_lemma} and using the estimate $\|g\|_2 \le \sqrt{\alpha} \|f_1\|_2$ we are then done. The cases where $\alpha> k$ are treated similarly.
\end{proof}

In addition we will need a notion of a \emph{polynomial} system instead of just a quadratic system. In this section we let $\mb{G} = (\mb{R}/\mb{Z})\times (\mb{R}/\mb{Z})\times (\mb{R}/\mb{Z})$.
\begin{defn}
A polynomial system of dimension $d$ is a map $\Psi:\mb{F}_p\to \mb{G}^d$ of the form
\begin{equation*}
\Psi(x) = (a_i x^\alpha/p, a_i x^\beta/p, a_i x^\gamma/p)_{i=1}^d,
\end{equation*}
where $(a_i)_{i=1}^d \subset \mb{F}_p^d$.
\end{defn}
Furthermore we will need to define trigonometric polynomials on $\mb{G}^d$ instead of on $(\mb{R}/\mb{Z})^{2d}$ in the obvious way.

We now state the three key lemmas in this setting, and briefly sketch the proof of each.
\begin{lem}[Regularity lemma]
There are functions $p_3, M, D:\mb{Z}_{\ge 0}\times (0,1]\times \mb{N}\to \mb{R}_{\ge 0}$ such that the following holds. Let $c:\fp \to [r]$ be an $r$-colouring of $\fp$ and let $\eps>0$. Then there is a polynomial system $\Psi$ of dimension $d$, functions $F_1,\dots,F_r: \mb{G}^d\to \mb{R}_{\ge 0}$, and functions $g_1,\dots,g_r:\fp \to [-1,1]$, such that the following holds provided $p\ge p_3(r,\epsilon,K)$. 
\begin{enumerate}[(i)]
\item $\|F_i\|_\mathrm{trig} \le M(r,\epsilon,K)$ for all $i$;\\
\item $d\le D(r,\epsilon,K)$;\\
\item $\|F_i\circ \Psi-g_i\|_2\le \epsilon$ for all $i$;\\
\item $\|1_{c^{-1}(i)}-g_i\| \le \epsilon$ for all $i$;\\
\item $\sum_i F_i\circ \Psi\ge 1$ pointwise.\\
\end{enumerate}
Here $K=\max\{\alpha,\beta,\gamma\}$.
\end{lem}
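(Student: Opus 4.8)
The plan is to mirror the proof of Lemma \ref{lem:regularity_lemma} almost verbatim, tracking the extra exponent parameters through each step. The norm $\|\cdot\|_{u_3}$ is replaced throughout by $\|\cdot\|_{\alpha,\beta,\gamma}$, and ``quadratic system'' by ``polynomial system''; the key point is that every place in \S\ref{sec:regularity} where the specific shape $ax^2+bx$ was used only relied on the fact that this is a polynomial of bounded degree whose exponential sum is small unless all coefficients vanish. Concretely, in the analogue of Lemma \ref{u3bad_projbad_lemma} one uses the hypothesis $\|f\|_{\alpha,\beta,\gamma}\ge\delta$ to extract $a,b,c\in\fp$ with $|\mb{E}_x f(x)\overline{e_p(ax^\alpha+bx^\beta+cx^\gamma)}|\ge\delta$; one then sets $\Phi(x)=(a_ix^\alpha/p,a_ix^\beta/p,a_ix^\gamma/p)_{i=1}^{3}$ (a polynomial system of dimension $3$, or fewer if some exponents coincide) and $F(\theta,\phi,\psi)=e(\theta_1+\phi_2+\psi_3)$, so $g=F\circ\Phi$ realises the large inner product, and the Lipschitz estimate $|F(x)-F(y)|\le 6\pi|x-y|$ gives $\|\Pi_R^\Phi g-g\|_\infty\le 6\pi R^{-1}$, which is $\le\frac12\delta$ once $R>12\pi\delta^{-1}$.

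Next I would run the Koopman--von Neumann iteration exactly as in Lemma \ref{koopman_von_neumann}: at each failing stage append the (at most $3$) new coefficients $b_1,b_2,b_3$ to the current polynomial system, observe $\Pi_R^\Phi$ is still idempotent and self-adjoint with $\Pi_R^\Phi\Pi_R^{\Psi_{j+1}}=\Pi_R^\Phi$, derive the energy increment $E_{j+1}-E_j\ge\frac14\delta^2$, and terminate after at most $4r\delta^{-2}$ steps. This yields a polynomial system $\Psi$ of dimension $d\le 3\cdot 4r\delta^{-2}+1$ with $\|f_i-\Pi_R^\Psi f_i\|_{\alpha,\beta,\gamma}\le\delta$ for all $i$. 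Then I would invoke the analogue of Lemma \ref{trig_lem}: the construction of $F$ as a weighted sum of trigonometric approximations $F_{R;t,u,v}$ to the indicator functions of the boxes $I_{R;t,u,v}\subset\mb{G}^d$ is identical, and the only estimate that needs adapting is the bound on $|\Psi^{-1}(E)|$, where now $E$ lies in a union of thin slabs in one of the three coordinates; one uses that for each $a\in\fp$ the map $x\mapsto ax^\alpha$ takes each value at most $\alpha$ times (since $t^\alpha=c$ has at most $\alpha$ roots in $\fp$), so $\#\{x:ax^\alpha/p\in J\}\le \alpha(p|J|+C)$, and absorbing the factor $\alpha\le K$ into the constants. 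This is where the dependence on $K=\max\{\alpha,\beta,\gamma\}$ enters the functions $p_3,M,D$. Finally, set $g_i=\Pi_R^\Psi f_i$ with $\delta=\epsilon$, $R=\lceil 12\pi\epsilon^{-1}\rceil+1$, and verify properties (i)--(v) precisely as before, with (v) following from $\Pi_R^\Psi 1=1$ and (iv) from the Koopman--von Neumann bound.

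The main obstacle — and it is a genuinely new input rather than a cosmetic change — is the exponential sum estimate replacing \eqref{quadratic sum}, namely that $|\mb{E}_x e_p(a_\alpha x^\alpha+a_\beta x^\beta+a_\gamma x^\gamma)|=O_{\alpha,\beta,\gamma}(p^{-1/2})$ whenever not all coefficients of the (distinct-degree) terms vanish. This is a standard consequence of the Weil bound for complete exponential sums with polynomial argument, and it is actually only needed in the counting lemma of \S\ref{sec:counting}, not in the regularity lemma itself; within the regularity lemma the only ``smallness'' fact used is the counting estimate for level sets of $x\mapsto ax^\alpha/p$, which is elementary. So for the statement at hand the real work is purely bookkeeping: confirming that replacing $2d$ coordinates by $3d$ coordinates and the increment constant by one depending on $3$ rather than $2$ changes nothing essential, and that all the new constants can be packaged into functions of $(r,\epsilon,K)$. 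One should also note the trivial simplification that when two or three of $\alpha,\beta,\gamma$ coincide the polynomial system has fewer coordinates per index, which only improves the bounds; the stated form with $\mb{G}=(\mb{R}/\mb{Z})^3$ is the worst case and suffices uniformly.
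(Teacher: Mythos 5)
Your proposal is correct and follows essentially the same route as the paper: adapt Lemmas \ref{u3bad_projbad_lemma}, \ref{koopman_von_neumann} and \ref{trig_lem} with the $\|\cdot\|_{\alpha,\beta,\gamma}$ norm and polynomial systems, the only substantive new ingredient being the level-set bound $\#\{x\in\fp: ax^\delta/p\in J\}\le \delta p|J|+C$ replacing \eqref{interval_bd}, which is exactly the modification the paper highlights (and you rightly observe that the Weil/van der Corput exponential-sum estimate is only needed for the counting lemma, not here).
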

\begin{proof}[Sketch of proof]
    Lemma \ref{lem:regularity_lemma} is proved via the three intermediate lemmas \ref{u3bad_projbad_lemma}, \ref{koopman_von_neumann} and \ref{trig_lem}. The analogue of each of these in the current setting goes through with nearly identical proofs, by modifying a few constants. In particular we note that in the proof of Lemma \ref{trig_lem} we will need the following fact instead of \eqref{interval_bd}. Let $J$ be an interval in $\R/\Z$ not containing $0$, and let $\delta\in \N$. Then
\[ \#\{ x\in \fp: ax^\delta/p \in J\} \le \delta p |J| + C\]
for any $a\in \fp$ and an absolute constant $C$. This holds because $ax^\delta$ takes each value in $\fp$ at most $\delta$ times.

Combining the three is also done in exactly the same way as before.
\end{proof}

The conclusion of the counting lemma looks slightly different depending on how many of $\alpha,\beta$ and $\gamma$ are equal. In general we may write the integrand in the counting lemma as
\[ F(t_\alpha,t_\beta,t_\gamma) F(u_\alpha,u_\beta,u_\gamma)F(v_\alpha,v_\beta,v_\gamma)1_{t_\alpha+u_\beta=v_\gamma}.\]
If say $\alpha=\beta=\gamma$ this should be interpreted as
\[\tilde{F}(t)\tilde{F}(u)\tilde{F}(t+u),\]
where we have defined $\tilde{F}(w)=F(w,w,w)$. Similarly, if $\alpha=\beta\ne \gamma$ we recover the integrand in Lemma \ref{counting_lemma} with $\ol{F}$ in place of $F$, where we define $\ol{F}(u,v) = F(v,v,u)$. Below we state and sketch the proof of the counting lemma for the case where none of $\alpha,\beta,\gamma$ are equal, and leave it to the reader to examine the other cases.

\begin{lem}[Counting lemma]
Let $\Psi$ be a $d$-dimensional polynomial system with distinct exponents $\alpha,\beta,\gamma$, and let $F:\mb{G}^d\to \mb{C}$ be a trigonometric polynomial. Then
\begin{multline*}
T(F\circ \Psi, F\circ\Psi, F\circ\Psi) \\= \int F(t_1,u_1,v_1) F(t_2,u_2,v_2)F(t_3,u_3,t_1+u_2)\: d\mu_{G_\Psi}^{\otimes 8}(t_1,\dots,u_3)\\
+ O_K (p^{-1/2^{K-1}}M^3),
\end{multline*}
where $K = \max\{\alpha,\beta,\gamma\}$ and $M = \|F\|_\mathrm{trig}$.
\end{lem}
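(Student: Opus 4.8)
The plan is to imitate the proof of Lemma \ref{counting_lemma}, simply carrying the extra exponents along. Writing $a=(a_i)_{i=1}^d$ for the coefficients of $\Psi$, expanding each of the three copies of $F$ into its Fourier series and using multilinearity of $T$, one obtains
\begin{equation*}
T(F\circ\Psi,F\circ\Psi,F\circ\Psi)=\sum_{(\xi_j,\zeta_j,\omega_j)_{j=1}^{3}}\hat F(\xi_1,\zeta_1,\omega_1)\hat F(\xi_2,\zeta_2,\omega_2)\hat F(\xi_3,\zeta_3,\omega_3)\,T(f_1,f_2,f_3),
\end{equation*}
where $f_1(x)=e_p((\xi_1\cdot a)x^\alpha+(\zeta_1\cdot a)x^\beta+(\omega_1\cdot a)x^\gamma)$ and $f_2,f_3$ are the analogous phases built from $(\xi_2,\zeta_2,\omega_2)$, respectively $(\xi_3,\zeta_3,\omega_3)$. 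For each such triple I would then, exactly as in the sketch of the Proposition above, introduce $g_1(w)=\sum_{x^\alpha=w}f_1(x)$, $g_2(w)=\sum_{y^\beta=w}f_2(y)$, $g_3(w)=\sum_{z^\gamma=w}f_3(z)$, so that $T(f_1,f_2,f_3)=\sum_\xi\hat g_1(-\xi)\hat g_2(-\xi)\hat g_3(\xi)$ with $\hat g_1(-\xi)=\E_x e_p((\xi_1\cdot a+\xi)x^\alpha+(\zeta_1\cdot a)x^\beta+(\omega_1\cdot a)x^\gamma)$, $\hat g_2(-\xi)=\E_y e_p((\xi_2\cdot a)y^\alpha+(\zeta_2\cdot a+\xi)y^\beta+(\omega_2\cdot a)y^\gamma)$ and $\hat g_3(\xi)=\E_z e_p((\xi_3\cdot a)z^\alpha+(\zeta_3\cdot a)z^\beta+(\omega_3\cdot a-\xi)z^\gamma)$.

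The main analytic input I would use is the Weyl bound $|\E_x e_p(P(x))|\ll_K p^{-1/2^{K-1}}$, valid after $K-1$ rounds of differencing whenever $P$ is a nonconstant polynomial over $\fp$ of degree at most $K$; since $\alpha,\beta,\gamma$ are distinct and $p$ is large, the functions $x^\alpha,x^\beta,x^\gamma$ are linearly independent, so there is no cancellation between them. Fixing a frequency tuple, I would split into two cases. If one of $\zeta_1,\omega_1,\xi_2,\omega_2,\xi_3,\zeta_3$ fails to lie in $\Lambda_\Psi$, then the corresponding one of $\hat g_1(-\xi),\hat g_2(-\xi),\hat g_3(\xi)$ retains, for every $\xi$, an uncancellable monomial and is therefore $\ll_K p^{-1/2^{K-1}}$ uniformly in $\xi$; bounding this factor by its supremum and applying Cauchy--Schwarz and Parseval to the other two (using $\|g_j\|_2\le\sqrt K$ for a phase $f_j$) gives $|T(f_1,f_2,f_3)|\ll_K p^{-1/2^{K-1}}$. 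If instead all six of those frequencies lie in $\Lambda_\Psi$, then $f_1,f_2,f_3$ collapse to the pure phases $e_p((\xi_1\cdot a)x^\alpha)$, $e_p((\zeta_2\cdot a)y^\beta)$, $e_p((\omega_3\cdot a)z^\gamma)$, and substituting $z^\gamma=x^\alpha+y^\beta$ on the solution set I would rewrite $T(f_1,f_2,f_3)$ as the twisted point count $\tfrac1{p^2}\sum_{x^\alpha+y^\beta=z^\gamma}e_p\big((\xi_1\cdot a+\omega_3\cdot a)x^\alpha+(\zeta_2\cdot a+\omega_3\cdot a)y^\beta\big)$.

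To finish I would estimate this twisted count by writing the number of $\gamma$th roots as a sum of multiplicative characters and applying Weil's bound to the resulting mixed character/additive sums: it should equal $1+O_K(p^{-1/2})$ when both $\xi_1+\omega_3$ and $\zeta_2+\omega_3$ lie in $\Lambda_\Psi$ (the untwisted case recovering $\#\{x^\alpha+y^\beta=z^\gamma\}=p^2+O_K(p^{3/2})$), and $O_K(p^{-1/2})$ otherwise. As $K\ge 3$ here, $p^{-1/2}\le p^{-1/2^{K-1}}$, so in every case $T(f_1,f_2,f_3)=1_{\mc L}(\xi_j,\zeta_j,\omega_j)+O_K(p^{-1/2^{K-1}})$, where $\mc L$ is the set of tuples with $\zeta_1,\omega_1,\xi_2,\omega_2,\xi_3,\zeta_3,\xi_1+\omega_3,\zeta_2+\omega_3$ all in $\Lambda_\Psi$. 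Feeding this back, the error term contributes $\ll_K p^{-1/2^{K-1}}M^3$ because $\sum_{(\xi_j,\zeta_j,\omega_j)}|\hat F(\xi_1,\zeta_1,\omega_1)\hat F(\xi_2,\zeta_2,\omega_2)\hat F(\xi_3,\zeta_3,\omega_3)|\le M^3$, while $\sum_{\mc L}\hat F(\xi_1,\zeta_1,\omega_1)\hat F(\xi_2,\zeta_2,\omega_2)\hat F(\xi_3,\zeta_3,\omega_3)$ is, by the orthogonality relation of Lemma \ref{orthog_lemma} on $G_\Psi$ and exactly as at the end of the proof of Lemma \ref{counting_lemma}, equal to $\int F(t_1,u_1,v_1)F(t_2,u_2,v_2)F(t_3,u_3,t_1+u_2)\,d\mu_{G_\Psi}^{\otimes 8}(t_1,\dots,u_3)$.

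The one genuinely new difficulty I expect is that last step. In Lemma \ref{counting_lemma} the residual sum degenerated to a single quadratic Gauss sum handled by \eqref{quadratic sum}, whereas with three distinct exponents it becomes a two-variable sum over the twisted Fermat-type variety $x^\alpha+y^\beta=z^\gamma$ and one really needs a Weil-type estimate. Matching the surviving frequency tuples against the lattice conditions that define the main term, and keeping track of the various error sources so that the final gain $p^{-1/2^{K-1}}$ is preserved, should be routine but needs to be done with care.
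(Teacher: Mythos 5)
Your argument is correct, and its skeleton is the same as the paper's: expand $F$ into its Fourier series, bound each term $T(f_1,f_2,f_3)$ by exponential-sum estimates of Weyl type, identify the surviving frequency tuples with exactly the lattice conditions $\zeta_1,\omega_1,\xi_2,\omega_2,\xi_3,\zeta_3,\xi_1+\omega_3,\zeta_2+\omega_3\in\Lambda_\Psi$, and match the resulting sum with the integral via the orthogonality relation of Lemma \ref{orthog_lemma}. The one place you diverge is the step you flag as the "genuinely new difficulty": the paper never needs Weil's bound for character sums. For a tuple satisfying \emph{all} the lattice conditions the phase $e_p\big(s(x^\alpha+y^\beta-z^\gamma)\big)$ vanishes identically on the solution set, so the term equals the constant $C_{\alpha,\beta,\gamma}=p^{-2}\#\{x^\alpha+y^\beta=z^\gamma\}$ \emph{exactly}; the paper then evaluates $C_{\alpha,\beta,\gamma}=1+O_K(p^{-1/2^{K-1}})$ by one more application of the same Fourier trick, writing $C_{\alpha,\beta,\gamma}=\sum_{\xi}\hat{f}_\alpha(\xi)\hat{f}_\beta(\xi)\hat{f}_\gamma(-\xi)$ with $f_\delta(u)=\sum_{x^\delta=u}1$, bounding the $\xi\ne 0$ terms by $\sup_{\xi\ne 0}|\hat{f}_\alpha(\xi)|\cdot\|f_\beta\|_2\|f_\gamma\|_2\ll_K p^{-1/2^{K-1}}$. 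Your nontrivially twisted diagonal terms can be dispatched the same way: since the twist $e_p(s_1x^\alpha+s_2y^\beta)$ depends only on $u=x^\alpha$ and $v=y^\beta$, the twisted count is again a sum of the shape $\sum_{\xi}\hat{f}_\alpha(\xi-s_1)\hat{f}_\beta(\xi-s_2)\hat{f}_\gamma(-\xi)$, where away from the single value of $\xi$ at which one factor is $1$ another factor is uniformly $\ll_K p^{-1/2^{K-1}}$, and Cauchy--Schwarz finishes. So your Weil-based treatment (multiplicative characters plus Weil for mixed sums) is a valid alternative and even gives the stronger saving $p^{-1/2}$ on those terms, but it imports algebraic-geometric input where the paper's elementary Weyl/van der Corput bound already suffices, at the cost of the extra care you yourself anticipate (the principal character, the point $w=0$, and degenerate exponents, all of which disappear in the Fourier route).
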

\begin{proof}[Sketch of proof]
We follow the steps of the proof of Lemma \ref{counting_lemma}, with the standard bound
\begin{equation*}
\mb{E}_x e_p(g(x))\ll_K p^{-1/2^{K-1}}
\end{equation*}
for any non-zero polynomial $g$ of degree at most $K$, in place of \eqref{quadratic sum}. This bound can be obtained by the van der Corput estimate. This will give
\begin{multline*}
T(F\circ \Psi, F\circ\Psi, F\circ\Psi)\\ = C_{\alpha,\beta,\gamma}\sum_{\substack{\xi_2,\xi_3,\xi_4,\xi_6,\xi_7,\xi_8\in\Lambda_\Psi\\ \xi_1+\xi_9,\xi_5+\xi_9\in\Lambda_\Psi}} \hat{F}(\xi_1,\xi_2,\xi_3)\hat{F}(\xi_4,\xi_5,\xi_6)\hat{F}(\xi_7,\xi_8,\xi_9)\\
+ O_K(p^{-1/2^{K-1}}M^3),
\end{multline*}
where
\begin{multline*}
C_{\alpha,\beta,\gamma} = \frac{1}{p^2} \sum_{x^\alpha+y^\beta=z^\gamma} 1 = \mb{E}_{u,v} f_\alpha(u)f_\beta(v)f_\gamma(u+v) = \sum_{\xi} \hat{f}_\alpha(\xi)\hat{f}_\beta(\xi)\hat{f}_\gamma(-\xi)\\
= \hat{f}_\alpha(0)\hat{f}_\beta(0)\hat{f}_\gamma(0) + O_K(p^{-1/2^{K-1}}),
\end{multline*}
where $f_\delta(u) = \sum_{x^\delta=u} 1$ for $\delta\in\mb{N}$, and we have bounded the terms with $\xi\ne 0$ as usual by noting $|\hat{f}_\alpha(\xi)|\ll p^{-1/2^{K-1}}$ and $\|f_\beta\|_2\le \sqrt{\beta},\|f_\gamma\|_2\le \sqrt{\gamma}$. Now
\begin{equation*}
\hat{f}_\delta(0) = \mb{E}_u f_\delta (u) = \mb{E}_x 1 = 1
\end{equation*}
for $\delta\in\mb{N}$, and so $C_{\alpha,\beta,\gamma} = 1 + O_K(p^{-1/2^{K-1}})$, which completes the proof.
\end{proof}
Note that the constant $C_{1,1,2}$ was implicitly evaluated to $1$ in the proof of Lemma \ref{counting_lemma}, which is why it wasn't necessary to introduce this quantity there. 

We also note that if some of $\alpha,\beta,\gamma$ are equal, the values of $\xi_1,\dots,\xi_9$ which contribute to the main term will be different. This is because for a sum of the form $ \E_x e_p(ax^\alpha+bx^\beta+c x^\gamma)$ to be small it is not enough that $(a,b,c)\ne (0,0,0)$ in this case.

Finally, because of the different forms of the counting lemma, the Ramsey lemma will have to look slightly different depending on which case we are in. In the second case the needed Ramsey lemma is in fact Lemma \ref{lem:regularity_lemma}. We here therefore prove the first and the third case.
\begin{lem}[Ramsey lemma, case 1]
There is a positive function $\rho:\mb{Z}_{\ge 0} \to (0,1]$ with the following property. Assume that $F_1,\dots,F_r: G_\Psi \to \mb{R}_{\ge 0}$ are continuous functions that satisfy $\sum_{i=1}^r F_i(x)\ge 1$ for all $x\in G_\Psi$. Then there is an $i\in [r]$ such that
\begin{equation*}
\int F_i(t_1) F_i(t_2)F_i(t_1+t_2)\:d\mu^{\otimes 2}_{G_\Psi}(t_1,t_2) \ge \rho(r).
\end{equation*}
\end{lem}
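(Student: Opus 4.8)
The plan is to follow the same two stages as in the proof of Lemma~\ref{ramsey_lemma}: first pass from the functions $F_i$ to a genuine partition of $G_\Psi$, and then settle the partition statement using a supersaturated form of Schur's theorem for finite abelian groups --- which is, pleasingly, exactly the ingredient used in \cite{csikvari-gyarmati-sarkozy} --- and which itself follows from Ramsey's theorem. Throughout I write $\sigma(A) := \int 1_A(t_1)1_A(t_2)1_A(t_1+t_2)\,d\mu^{\otimes 2}(t_1,t_2)$ for the normalised count of Schur triples inside $A$.

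\emph{Step 1: reduction to a partition.} Replacing each $F_i$ by $\min(F_i,1)$ only decreases the integral we must bound below and preserves $\sum_i F_i\ge 1$, so I may assume $0\le F_i\le 1$. Define colour classes $A_1,\dots,A_r$ of $G_\Psi$ exactly as in \eqref{Ai} but using a single coordinate, namely $A_i=\{x\in G_\Psi: F_i(x)\ge 1/r\}\setminus\bigcup_{j<i}A_j$. As in the proof of Lemma~\ref{ramsey_lemma}, any $x\notin\bigcup_i A_i$ would satisfy $\sum_i F_i(x)<1$, so the $A_i$ partition $G_\Psi$; and since $F_i\ge 1/r$ on $A_i$ we get $\int F_i(t_1)F_i(t_2)F_i(t_1+t_2)\,d\mu^{\otimes 2}\ge r^{-3}\sigma(A_i)$. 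Hence it suffices to produce a constant $\epsilon_r>0$, independent of $G_\Psi$, with $\max_i\sigma(A_i)\ge\epsilon_r$; then $\rho(r):=r^{-3}\epsilon_r$ does the job.

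\emph{Step 2: Schur via Ramsey.} Here I use that $G_\Psi$ is a \emph{finite} abelian group; put $n=|G_\Psi|$ and let $R=R_r(3)$ be the $r$-colour Ramsey number for triangles. If $n<R$, the triple $(0,0,0)$ is monochromatic, giving $\sigma(A_{c(0)})\ge n^{-2}\ge R^{-2}$. If $n\ge R$, fix an arbitrary linear order on the finite set $G_\Psi$ and colour the edge $\{a,b\}$ of the complete graph on vertex set $G_\Psi$ by $c(b-a)$, where $b$ denotes the larger endpoint and $c$ is the colouring underlying the $A_i$. A monochromatic triangle $a<b<c$ of colour $i$ yields $(b-a)+(c-b)=(c-a)$ with all three summands in $A_i$, i.e.\ a monochromatic Schur pair $(b-a,\,c-b)$; and since a triangle mapping to a prescribed Schur pair is determined by its least vertex, at most $n$ triangles produce the same pair. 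By the standard supersaturation argument --- every $R$-element subset of vertices contains a monochromatic triangle, so averaging over all $\binom{n}{R}$ such subsets produces at least $\binom{n}{3}/\binom{R}{3}$ monochromatic triangles in total --- some colour $i$ has at least $c_r n^3$ monochromatic triangles for an explicit $c_r>0$, hence at least $c_r n^2$ monochromatic Schur pairs, so $\sigma(A_i)\ge c_r$. Tracking constants one may take, say, $\epsilon_r=\tfrac{2}{9}R_r(3)^{-3}$ in both regimes.

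\emph{Main obstacle.} The only point requiring real thought is recognising that the blueprint of Proposition~\ref{ramsey_prop} --- an induction on $r$ relative to a shrinking set $T\subseteq G$ --- cannot be used here: the Schur configuration $1_A(t_1)1_A(t_2)1_A(t_1+t_2)$ has no free ``slack'' variable, in contrast to the configurations arising in the other cases and in \cite{green-sanders}, and indeed the corresponding statement is false when a general subset $T$ replaces $G$. This is what forces the global Ramsey-theoretic argument above, which in turn relies on the finiteness of $G_\Psi$ and on the triangle $\leftrightarrow$ Schur-triple correspondence; the mildly delicate step there is the bound of $n$ on the fibres of that correspondence, which is exactly what converts the cubic count of monochromatic triangles into the quadratic count of monochromatic Schur triples needed to conclude $\sigma(A_i)\ge c_r$.
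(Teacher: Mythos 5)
Your argument is correct, and its first half is exactly the paper's: truncate to $f_i=\min(F_i,1)$, define the colour classes $A_i$ by the threshold $1/r$ as in \eqref{Ai}, and reduce to showing that some colour class contains $\gg_r 1$ Schur triples in density. Where you diverge is in the quantitative Schur input. The paper first observes that $G_\Psi$ is either $\{0\}$ (trivial case) or isomorphic to $\fp$, and then simply cites the supersaturated Schur theorem of Frankl--Graham--R\"odl \cite[Theorem 1]{frankl}, which gives $\ge \nu(r)p^2$ monochromatic solutions of $x+y=z$; you instead prove the needed supersaturation from scratch for an arbitrary finite abelian group, via the Ramsey number $R_r(3)$, the averaging-over-$R$-subsets argument, and the observation that the triangle-to-Schur-pair map $\{a<b<c\}\mapsto(b-a,c-b)$ has fibres of size at most $n$. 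Your route is more self-contained and does not need the structural identification of $G_\Psi$ (only its finiteness, which follows from $p\Z^d\subset\Lambda_\Psi$), at the cost of worse constants; the paper's route is shorter by citation. Two immaterial quibbles: your numerical constant $\epsilon_r=\tfrac29 R_r(3)^{-3}$ does not quite survive the pigeonhole over the $r$ colours (an extra factor of order $1/r$, i.e.\ $R_r(3)^{-4}$ is safe), but since only positivity of $\rho(r)$ is required this is cosmetic; and your ``main obstacle'' remark that the inductive scheme of Proposition \ref{ramsey_prop} cannot work is stated more strongly than warranted---the paper explicitly says the techniques of Lemma \ref{ramsey_lemma} could be adapted here, and merely chooses the quantitative Schur shortcut, as you do.
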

\begin{proof}
We could prove this by using the same techniques as in the proof of Lemma \ref{ramsey_lemma}, but in this case our result actually just follows from a quantitative version of Schur's theorem. Note that if $a \equiv 0\Mod p$ then $G_\Psi = \{0\}$, and if $a_i\nequiv 0\Mod p$ for some $i$ we have $G_\Psi = \{ax/p: x\in\fp\}\simeq \fp$, by the definition of $G_\Psi$. If $G_\Psi = \{0\}$ the statement we are trying to prove is trivial, as $F(0)\ge 1/r$ for some $i$, and for this $i$ the above integral is $\ge 1/r^3$.

In the non-trivial case, define sets $A_i$ as in the proof of Lemma \ref{ramsey_lemma} to get an $r$ colouring of $G_\Psi$. We then invoke \cite[Theorem 1]{frankl}, which states that given an $r$-colouring of $\N$ there are $\ge \nu(r) N^{2}$ monochromatic solutions to $x+y=z$ with $x,y,z\le N$. Set $p=N$, then certainly we also have $\ge \nu(r) p^2$ monochromatic solutions to $x+y=z$ with $x,y,z\in \fp$, which completes the proof.
\end{proof}

\begin{lem}[Ramsey lemma, case 3]\label{lem:ramsey2}
There is a positive function $\rho:\mb{Z}_{\ge 0} \to (0,1]$ with the following property. Suppose that $G$ is a compact Abelian group with Haar probability measure $\mu$. Assume also that $F_1,\dots,F_r: G\times G\times G\to \mb{R}_{\ge 0}$ are continuous functions that satisfy $\sum_{i=1}^r F_i(g_1,g_2,g_3)\ge 1$ for all $(g_1,g_2,g_3)\in G\times G\times G$. Then there is an $i\in [r]$ such that
\begin{equation*}
\int F_i(t_1,u_1,v_1) F_i(t_2,u_2,v_2)F_i(t_3,u_3,t_1+u_2)\:d\mu^{\otimes 8}(t_1,\dots,u_3) \ge \rho(r).
\end{equation*}
\end{lem}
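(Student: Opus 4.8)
The plan is to mimic the proof of Lemma \ref{ramsey_lemma}, since the linear configuration here is again $u+u'=t''$, only now each ``point'' carries three coordinates instead of two. First I would reduce to a colouring problem: set $f_i=\min(F_i,1)$, so that $0\le f_i\le 1$ and $\sum_i f_i\ge 1$ pointwise on $G^3$, and define colour classes
\begin{equation*}
A_i = \Big\{ (t,u,v)\in G^3 : f_i(t,u,v)\ge \tfrac{1}{r},\ (t,u,v)\notin \bigcup_{j<i} A_j \Big\}.
\end{equation*}
As before, if a point lay in no $A_i$ then $\sum_i f_i<1$ there, a contradiction, so $\bigcup_i A_i = G^3$. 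Restricting the target integral to $A_i$ costs a factor $r^{-3}$ and reduces matters to bounding below, for some $i$,
\begin{equation*}
\int 1_{A_i}(t_1,u_1,v_1)1_{A_i}(t_2,u_2,v_2)1_{A_i}(t_3,u_3,t_1+u_2)\:d\mu^{\otimes 8}.
\end{equation*}
After introducing the dummy variables one sees this is a quantity of the shape $\Lambda_T(A_i)$ from \eqref{lambda_T}, except that each of the three predicates involves one further free coordinate (the $v_j$ or the third slot) that is integrated out independently; this only contributes harmless extra tensor factors of $\mu$ and does not interfere with the combinatorial core.

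The key step is therefore to prove the analogue of Proposition \ref{ramsey_prop} for the configuration $u+u'=t''$ living on $G\times G$ (the ``$t$'' and ``$u$'' slots), carrying the extra passive coordinates along, via induction on the number of colours $r$ together with the colouring Lemma \ref{colouring lemma}. I would set $\epsilon_r = 2^{-7r}(r!)^{-3}$ as in Proposition \ref{ramsey_prop}; allow partial colourings defined off an exceptional set $E$ of small $\delta_T$-measure so that the induction can absorb the $E'=(E\cup A_i)\cap ((T'+T')\times T')$ step; apply the pigeonhole principle to find a colour $i$ with $\delta_T(A_i)\ge \frac{1}{2r}$; apply Lemma \ref{colouring lemma} with $X=T+T$, $Y=T$, $\eta\asymp \epsilon_{r-1}$ and $\nu_X$ the pushforward of $\mu_T^{\otimes 2}$ under addition to produce $T'\subset T$ with $\mu_T(T')\ge \frac{1}{4r}$ and a large ``popular'' subset $Z\subset T'$; and then split into the two cases $\delta_{T'}(A_i)\ge \frac12\epsilon_{r-1}$ (restrict the relevant integrations to $T'$ and count directly, as in Proposition \ref{ramsey_prop}) versus $\delta_{T'}(A_i)<\frac12\epsilon_{r-1}$ (pass to $r-1$ colours on $(T'+T')\times T'$ off $E'$ and invoke the inductive hypothesis, then transfer back at the cost of $\mu_T(T')^7$). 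The base case $r=1$ is again $\Lambda_T((T+T)\times T \setminus E)\ge 1-3\delta_T(E)>\epsilon_1^3$.

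The only genuinely new bookkeeping is carrying the extra coordinate in each factor of $F_i$. Concretely: when I restrict factor three of the target integral, its third argument $t_1+u_2$ is forced to lie in $T'+T'$ whenever $t_1,u_2\in T'$, so exactly the same interior integrals $\int 1_{A_i}(t_1+t_6,t_2)\,d\mu_T^{\otimes 2}(t_1,t_6)\ge \frac{1}{4r}\eta$ for $t_2\in Z$ that appear in Proposition \ref{ramsey_prop} still govern the estimate, and the passive coordinates $v_1,v_2,u_3$ just integrate to $1$. I expect this to be entirely routine; the main obstacle, such as it is, is purely organisational --- making sure the definitions of $\delta_T(\cdot)$ and $\Lambda_T(\cdot)$ are redefined to include the dummy coordinates so that the two Fubini applications and the case split go through verbatim. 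Once the analogue of Proposition \ref{ramsey_prop} is in place, taking $T=G$ and tracking the $r^{-3}$ loss from the reduction to $1_{A_i}$ yields the lemma with $\rho(r)=r^{-3}\epsilon_r^3$, exactly as in the proof of Lemma \ref{ramsey_lemma}.
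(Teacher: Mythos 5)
There is a genuine gap: the reduction to Proposition \ref{ramsey_prop} ``with passive coordinates'' does not work, because the linear constraint in case 3 is not of the same shape as the one in \eqref{lambda_T}. In $\Lambda_T$ the two constrained variables ($t_2$ and $t_4$) both sit in the \emph{same} slot of their respective factors, which is exactly why a single application of Lemma \ref{colouring lemma} (giving one popular set $Z$ of second coordinates) controls the inner integrals of both of the first two factors simultaneously. In the case-3 quantity
\begin{equation*}
\int 1_{A_i}(t_1,u_1,v_1)\,1_{A_i}(t_2,u_2,v_2)\,1_{A_i}(t_3,u_3,t_1+u_2)\:d\mu^{\otimes 8},
\end{equation*}
the constrained variables are $t_1$, the \emph{first} coordinate of the first point, and $u_2$, the \emph{second} coordinate of the second point. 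Popularity of $t_1$ means largeness of a first-coordinate slice of $A_i$, while popularity of $u_2$ means largeness of a second-coordinate slice; these are different notions, and one application of Lemma \ref{colouring lemma} (which fixes a single choice of $Y$) cannot deliver both. Your claimed estimate via ``exactly the same interior integrals $\int 1_{A_i}(t_1+t_6,t_2)\,d\mu_T^{\otimes 2}(t_1,t_6)$'' does not even typecheck here, since $A_i\subset G^3$; and the assertion that $v_1,v_2,u_3$ ``just integrate to $1$'' is false -- $A_i$ genuinely depends on those coordinates, and integrating them out replaces indicators of colour classes by non-indicator slice functions, which destroys both the pigeonhole step and the passage to a partial $(r-1)$-colouring in the induction.

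The paper's proof handles this by redefining $\delta_T$ and $\Lambda_T$ for three-argument indicators (with the dummy variables placed in the third slot) and applying Lemma \ref{colouring lemma} \emph{twice, nested}: first with $Y=T$ the first coordinate and $X=T\times(T+T)$, producing $T'$ on which most $t$ have a large slice, which disposes of the first factor; then, in the case where $\delta_{T'}(A_i)$ is large, a second time inside $T'$ with the roles of the coordinates switched, producing $T''$ on which most $u$ have a large slice, which disposes of the second factor and leaves $\delta_{T''}(A_i)$. The induction on $r$ is then invoked in either of the two ``small density'' cases ($\delta_{T'}(A_i)$ small, or $\delta_{T''}(A_i)$ small), giving a three-way case analysis rather than the two-way one in Proposition \ref{ramsey_prop}. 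Your proposal is missing this second, nested application of the colouring lemma and the accompanying restructured induction; as written, the single-application argument cannot lower-bound the second factor's contribution.
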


\begin{proof}[Sketch of proof of Lemma \ref{lem:ramsey2}]
The linear Ramsey problem we are addressing is slightly different from the one in Lemma \ref{ramsey_lemma}, but a similar proof goes through. Introducing two dummy integrations we can view this as a problem on $T\times T\times (T+T)$, and the heart of the argument is to prove an analogue of Proposition \ref{ramsey_prop} in this setting, where now
\[ \Lambda_T(A) = \int 1_A(t_1,u_1,v_1+w_1) 1_A(t_2,u_2,v_2+w_2)1_A(t_3,u_3,t_1+u_2)\:d\mu^{\otimes 10}_T(\dots)\]
and
\[ \delta_T(A) = \int 1_A(t,u,v+w)\:d\mu_T^{\otimes 4}(t,u,v,w).\]
We give a rough sketch of this proof without specifying any parameters or constants.

As in the proof of Proposition \ref{ramsey_prop}, let $A_i$ be the largest colour class on $T$. Invoke Lemma \ref{colouring lemma} with $X= T\times (T+T)$ and $Y= T$ to get a set $T'\subset T$ such that
\[\int 1_{A_i}(t,u,v+w)\: d\mu_T^{\otimes 3}(u,v,w)\]
is large for all but a fraction $\eta$ of $t\in T'$. Restrict $t_1,t_2,u_2,v_2,w_2,t_3,u_3$ to this set. This then gives
\[\Lambda_T(A_i) \ge C(r) \int 1_{A_i}(t_2,u_2,v_2+w_2)1_{A_i}(t_3,u_3,t_1+u_2)\:d\mu_{T'}^{\otimes 7}(\dots)\]
for some quantity $C(r)$ depending only on $r$. Now, if $\delta_{T'}(A_i)$ is small we pass to a partial colouring of $T'$, removing the colour $i$, and use the induction hypothesis. If $\delta_{T'}(A_i)$ is large, we apply Lemma \ref{colouring lemma} a second time to get a set $T''\subset T'$ such that 
\[ \int 1_{A_i}(t,u,v+w)\:d\mu_{T'}^{\otimes 3}(t,v,w)\]
is large for all but a fraction $\eta$ of $u\in T''$. Restricting $u_2,t_3,u_3,t_1$ to this set then gives
\[\Lambda_T(A_i) \ge C'(r) \int 1_{A_i}(t_3,u_3,t_1+u_2)\:d\mu_{T''}^{\otimes 4}(t_3,u_3,t_1,u_2)=C'(r)\delta_{T''}(A_i).\]
To complete the proof, note that either $\delta_{T''}(A_i)$ is large, in which case we are done, or $\delta_{T''}(A_i)$ is small and we can pass to an $r-1$-colouring of $T''$ and use the induction hypothesis.
\end{proof}

Finally the three lemmas are combined in almost exactly the same way as in \S \ref{sec:main}.

\bibliographystyle{plain}
\bibliography{literature}

\end{document}